 \newtheorem{thm}{Theorem}[section]
 \newtheorem{lem}[thm]{Lemma}
 \newtheorem{prop}[thm]{Proposition}
 \theoremstyle{definition}
 \newtheorem{defn}[thm]{Definition}
 \theoremstyle{remark}
 \newtheorem{rem}[thm]{Remark}
 \newtheorem{ex}{Example}
 \numberwithin{equation}{section}
\begin{document}
%
%
%
%
%
%
%
%
%
\title[On Some Geometric Properties of Slice Regular Functions]
{On Some Geometric Properties of Slice Regular Functions of a Quaternion Variable}

\author[Sorin G. Gal]{Sorin G. Gal}
\address{
University of Oradea\\
Department of Mathematics and Computer Science\\
Str. Universitatii Nr. 1\\
410087 Oradea\\
Romania}
\email{galso@uoradea.ro}
\author[J. Oscar Gonz\'alez-Cervantes]{J. Oscar Gonz\'alez-Cervantes}
\address{
 Departamento de Matem\'aticas  \\
 E.S.F.M. del
I.P.N. 07338 \\
M\'exico D.F., M\'exico}
\email{
jogc200678@gmail.com}

\author[Irene Sabadini]{Irene Sabadini}
\address{%
Dipartimento di Matematica\\Politecnico di Milano\\Via Bonardi,
9\\20133 Milano, Italy}
\email{irene.sabadini@polimi.it}

\subjclass{Primary 30G35; Secondary 30C45}

\keywords{Quaternion, subordination, univalent function, starlike function, convex function, spirallike function, slice regular functions}


\begin{abstract}
The goal of this paper is to introduce and study some geometric properties of slice regular functions of quaternion variable like
 univalence, subordination, starlikeness, convexity and spirallikeness in the unit ball. We prove a number of results, among which an Area-type Theorem, Rogosinski inequality, and a Bieberbach-de Branges Theorem for a subclass of slice regular functions. We also discuss some geometric and algebraic interpretations of our results in terms of maps from $\mathbb R^4$ to itself. As a tool for subordination we define a suitable notion of composition of slice regular functions which is of independent interest.
\end{abstract}

\maketitle

\section{Introduction}

The functions we consider in this paper are  power series of the quaternion variable $q$ of the form $\sum_{n=0}^\infty q^n a_n$ with quaternionic coefficients $a_n$, converging in the unit ball $\mathbb B$. These functions are (left) slice regular according to the definition in \cite{GS} and also according to the definition in \cite{GP}. The two definitions in \cite{GS} and in \cite{GP} are different, but they give rise to the same class of functions on some particular opens sets called axially symmetric slice domains, that we will introduce in the next section. Slice regular functions are nowadays a widely studied topic, important especially for its application to a functional calculus for quaternionic linear operators, see \cite{CSS}, and to Schur analysis, see \cite{acs1}, and \cite{acs2} in which Blaschke factors are also studied.

It is then natural to continue the study of this class of functions by
 considering some geometric properties of slice regular functions like univalence, subordination, starlikeness, convexity and spirallikeness in the unit ball.
In the literature, some other geometric properties of this class of functions have been already considered and some results have been proved: the Bloch-Landau theorem, \cite{DRGS}, the Bohr theorem, \cite{GS1}, Landau-Toeplitz theorem, \cite{DRGS1}, (some of these results are collected in \cite{GSS}) Schwarz-Pick lemma, see \cite{BS} and \cite{abcs} for an alternative, shorter proof. Recently also the Borel-Carath\'eodory theorem has been proved, see \cite{gr} and also \cite{acks} for a weaker version.

The plan of the paper is as follows. Section 2 contains some known concepts and results useful in the next sections. Section 3 discusses univalence of slice regular functions and several conditions under which a function defined on the open unit ball of the quaternions is univalent. We introduce the counterpart of the Koebe function in the quaternionic setting and obtain an Area-type theorem and a Bieberbach- de Branges result for a subclass of univalent slice regular functions. We also provide some algebraic and geometric interpretation of our result in terms of transformations from $\mathbb R^4$ to itself. In Section 4 we introduce a notion of composition of formal power series and we study their convergence. We then introduce the notion of subordination of slice regular functions and we prove the Rogosinski inequality. In Section 5 we consider starlike and convex slice regular functions, also discussing some geometric consequences. Finally, in Section 6, we consider spirallike functions.

\section{Preliminaries}

Let us recall that the quaternion field is defined as
$$\mathbb{H}=\{q=x_1  + x_{2} i +x_{3} j + x_{4} k ; x_1, x_2, x_3, x_4 \in \mathbb{R} \},$$
where the imaginary units $i, j, k\not \in \mathbb{R}$ satisfy
$$i^2=j^2=k^2=-1, \ ij=-ji=k, \ jk=-kj=i, \ ki=-ik=j.$$ It is a noncommutative field and since obviously $\mathbb{C}\subset \mathbb{H}$, it extends the class of complex numbers. On $\mathbb{H}$ can be defined the norm
$\|q\|=\sqrt{x_1^2 +x_2^2+x_3^3+x_4^2}$, for $q=x_1  + x_{2} i +x_{3} j + x_{4} k$.

Let us denote by $\mathbb{S}$ the unit sphere of purely imaginary quaternion, i.e.
$$\mathbb{S}=\{q = i x_{1} + j x_{2} + k x_{3}, \mbox{ such that } x_{1}^{2}+x_{2}^{2}+x_{3}^{3}=1\}.$$
Note that if $I\in \mathbb{S}$, then $I^{2}=-1$. For this reason the elements of $\mathbb{S}$ are also
called imaginary units. For any fixed $I\in\mathbb{S}$ we define $\mathbb{C}_I:=\{x+Iy; \ |\ x,y\in\mathbb{R}\}$. It is easy to verify that
$\mathbb{C}_I$ can be identified with a complex plane, moreover $\mathbb{H}=\bigcup_{I\in\mathbb{S}} \mathbb{C}_I$.
The real axis belongs to $\mathbb{C}_I$ for every $I\in\mathbb{S}$ and thus a real quaternion can be associated to any imaginary unit $I$.
Any non real quaternion $q$ is uniquely associated to the element $I_q\in\mathbb{S}$
defined by $I_q:=( i x_{1} + j x_{2} + k x_{3})/\|  i x_{1} + j x_{2} + k x_{3}\|$ and, obviously, $q$ belongs to the complex plane $\mathbb{C}_{I_q}$.

Also, recall that for $q\in \mathbb{H}\setminus \mathbb{R}$, $q=x_{1}+i x_{2}+j x_{3}+ k x_{4}$, defining
$r:=\|q\|=\sqrt{x_{1}^{2}+x_{2}^{2}+x_{3}^{2}+x_{4}^{2}}$, there exists uniquely $a\in (0, \pi)$ with $cos(a):=\frac{x_{1}}{r}$ and there
exists uniquely $I_{q}\in \mathbb{S}$, such that
$$q=r e^{I_{q} a}, \, \mbox{ with } I_{q}=i y + j v + k s,\, y=\frac{x_{2}}{r \sin(a)},\, u=\frac{x_{3}}{r \sin(a)}, \, s=\frac{x_{4}}{r \sin(a)}.$$
Now, if $q\in \mathbb{R}$, then we choose $a=0$, if $q>0$ and $a=\pi$ if $q<0$, and as $I_{q}$ we choose an arbitrary fixed $I\in \mathbb{S}$.
So that if $q\in \mathbb{R}\setminus \{0\}$, then again we can write $q=\|q\|(\cos(a)+I\sin(a))$ (but with non-unique $I$).

The above is called the trigonometric form of the quaternion number $q\not=0$ and $a$ denoted by ${\rm arg}(q)$ is called the argument of the quaternion $q$.

{\it Evidently, $"a"$ could be considered as the angle between the real axis and the segment $[0, q]$ in $\mathbb{R}^{4}$ (or, in other words,
the angle between the real axis and the radius in $\mathbb{R}^{4}$ passing through origin and the geometric image in $\mathbb{R}^{4}$ of $q$).}

If $q=0$, then we do not have a trigonometric form for $q$ (exactly as in the complex case).

For our purposes we will need the following concept of analyticity of functions of a quaternion variable.

\begin{defn}
 Let $U$ be an open set in $\mathbb{H}$ and $f:U\to \mathbb{H}$ real differentiable. $f$ is called left slice regular or slice hyperholomorphic
if for every $I\in \mathbb{S}$, its restriction $f_{I}$ to the complex plane ${\mathbb{C}}_{I}=\mathbb{R}+ I \mathbb{R}$ passing through origin
and containing $I$ and $1$ satisfies
$$\overline{\partial}_{I}f(x+I y):=\frac{1}{2}\left (\frac{\partial}{\partial x}+I \frac{\partial}{\partial y}\right )f_{I}(x+I y)=0,$$
on $U\bigcap \mathbb{C}_{I}$.
The class of slice regular functions on $U$ will be denoted by $\mathcal{R}(U)$.
\end{defn}
Let $f\in\mathcal{R}(U)$. The so called left (slice) $I$-derivative of $f$ at a point $q=x+I y$ is given by
$$\partial_{I}f_{I}(x+ I y):=\frac{1}{2}\left (\frac{\partial}{\partial x}f_{I}(x+ I y) - I\frac{\partial}{\partial y}f_{I}(x+ I y)\right ).$$

Analogously, a function is called right slice regular if
$$(f_{I}{\overline{\partial}}_{I})(x+I y):=\frac{1}{2}\left (\frac{\partial}{\partial x}f_{I}(x +I y)+\frac{\partial}{\partial y}f_{I}(x+I y) I\right )=0,$$
on $U\bigcap \mathbb{C}_{I}$.

In this case, the right $I$-derivative of $f$ at $q=x+I y$ is given by
$$\partial_{I}f_{I}(x+ I y):=\frac{1}{2}\left (\frac{\partial}{\partial x}f_{I}(x+ I y) - \frac{\partial}{\partial y}f_{I}(x+ I y) I\right ).$$

 Let us now introduce a suitable notion of  derivative:
\begin{defn}
Let $U$ be an open set in $\mathbb{H}$, and let $f:U \to \mathbb{H}$
be a slice regular function. The slice derivative of $f$,
$\partial_s f$, is defined by:
\begin{displaymath}
\partial_s(f)(q) = \left\{ \begin{array}{ll}
\partial_I(f)(q) & \textrm{ if $q=x+Iy$, \ $y\neq 0$},\\ \\
\displaystyle\frac{\partial f}{\partial x} (x) & \textrm{ if\  $q=x\in\mathbb{R}$.}
\end{array} \right.
\end{displaymath}
\end{defn}
The definition of slice derivative is well posed because it is applied
only to slice regular functions and thus
$$
\frac{\partial}{\partial x}f(x+Iy)=
-I\frac{\partial}{\partial y}f(x+Iy)\qquad \forall I\in\mathbb{S},
$$
and therefore, analogously to what happens in the complex case,
$$ \partial_s(f)(x+Iy) =
\partial_I(f)(x+Iy)=\partial_x(f)(x+Iy).
$$
We will often  write $f'(q)$ instead of $\partial_s f(q)$.\\
If $f$ is a slice regular function, then also its slice derivative is slice regular, in fact
\begin{equation}
\label{derivatareg}\overline{\partial}_I(\partial_sf(x+Iy))
=\partial_s(\overline{\partial}_If(x+Iy))=0,\nonumber
\end{equation}
and therefore
$$
\partial^n_sf(x+Iy)=\frac{\partial^n
f}{\partial x^n}(x+Iy).
$$

Among the useful tools from the general theory on slice regular functions, we  recall some useful facts, collected in a following theorem. For the definition of axially symmetric s-domain we refer  the reader e.g. to \cite{CSS}. For our purposes, it is enough to know that balls in $\mathbb{H}$ are examples of axially symmetric s-domains.

\begin{thm}\label{1.3}
Let $U \subseteq\mathbb{H}$ be an axially symmetric s-domain and $f:U\to \mathbb{H}$ be a (left) slice regular function.
\begin{enumerate}
\item (Representation Formula) The following equality holds for all $q=x+I y \in \Omega$:
\begin{equation}\label{strutturaquat1}
f(x+I y) = \frac 12[   f(x+Jy)+ f(x-Jy)]
+ \frac 12 IJ[ f(x-Jy)-f(x+Jy)].
\end{equation}
\item (General Representation Formula) The following equality holds for all $q=x+I y \in \Omega$:
\begin{equation}\label{strutturaquat2}
f(x+I y) =(J-K)^{-1}[ J  f(x+Jy)- Kf(x+Ky)]
+I(J-K)^{-1}[ f(x+Jy)-f(x+Ky)].
\end{equation}
 \item (Splitting Lemma, see. e.g. Lemma 4.1.7, p. 117 in \cite{CSS}) For every $I\in \mathbb{S}$ and every $J\in \mathbb{S}$, perpendicular to $I$,
there are two holomorphic functions $F, G : U\bigcap \mathbb{C}_{I}\to \mathbb{C}_{I}$, such that for any $z=x+I y$ we have
$f_{I}(z)=F(z)+G(z) J$.

 \item (see e.g. Corollary 4.3.6, p. 121 in \cite{CSS}) For all $x, y\in \mathbb{R}$ such that $x+I y\in U$, there exist $a, b\in \mathbb{H}$
such that $f(x+ I y)=\alpha(x,y) + I \beta(x,y)$, for all $I\in \mathbb{S}$.

 \item (see e.g. Corollary 4.3.4, p. 121, \cite{CSS}) Define $D\subset \mathbb{R}^{2}$ such that $(x, y)\in D$ implies $x+I y\in U$.
The function $f$ is slice regular if and only if there exist two differentiable functions
$\alpha, \beta : D\subset \mathbb{R}^{2}\to \mathbb{H}$ satisfying on $D$, $\alpha(x, y)=\alpha(x, -y)$, $\beta(x, y)=-\beta(x, -y)$ and the Cauchy-Riemann system $\frac{\partial \alpha}{\partial x}-\frac{\partial \beta}{\partial y}=0$,
$\frac{\partial \beta}{\partial x}+\frac{\partial \alpha}{\partial y}=0$, such that
$f(x+ Iy)=\alpha(x, y)+ I\beta(x, y)$. The functions $\alpha(x, y)$ and $\beta(x, y)$ do not depend on $I\in \mathbb{S}$.

 \item (See e.g. Theorem 2.7 in \cite{GS}) Let ${B}(0;{R})=\{q\in \mathbb{H} ; \|q\| < R\}$. A function $f:\ {B}(0;{R}) \to \mathbb{H}$ is (left) slice regular on ${B}(0;{R})$ if and only if it has a series representation of the form
$$f(q)=\sum_{n=0}^{\infty}q^{n}\frac{1}{n !}\cdot \frac{\partial^{n} f}{\partial x^{n}}(0),$$
uniformly convergent on ${B}(0;{R})$.
\end{enumerate}
\end{thm}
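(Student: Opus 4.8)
This theorem collects standard structural facts, so the natural plan is to derive them from a single source---the Splitting Lemma---following the order $(3)\Rightarrow(6)\Rightarrow(4),(5)\Rightarrow(1),(2)$, rather than proving each assertion in isolation. First I would prove item $(3)$: fix $I\in\mathbb{S}$ and pick $J\in\mathbb{S}$ with $J\perp I$, so that $\{1,I,J,IJ\}$ is an $\mathbb{R}$-basis of $\mathbb{H}$. Collecting the real components of $f_I$ into two $\mathbb{C}_I$-valued functions $F,G$ gives $f_I=F+GJ$, and since $\overline{\partial}_I$ is $\mathbb{C}_I$-linear and annihilates the constant $J$, the single equation $\overline{\partial}_I f_I=0$ decouples into $\overline{\partial}_I F=0$ and $\overline{\partial}_I G=0$; thus $F,G$ are holomorphic on $U\cap\mathbb{C}_I$. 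This step is essentially bookkeeping.

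Next I would establish the series expansion $(6)$ on $B(0;R)$. By $(3)$, on the slice $B(0;R)\cap\mathbb{C}_I$ the holomorphic functions $F,G$ admit convergent Taylor expansions $F(x+Iy)=\sum_n (x+Iy)^n c_n$ and $G(x+Iy)=\sum_n (x+Iy)^n d_n$ with $c_n,d_n\in\mathbb{C}_I$, whence $f_I(x+Iy)=\sum_n (x+Iy)^n a_n$ with $a_n:=c_n+d_nJ\in\mathbb{H}$. Restricting to the real axis, where $\partial_s^n f=\partial^n f/\partial x^n$, uniqueness of the (real-variable) Taylor coefficients forces $a_n=\tfrac{1}{n!}\,\partial^n f/\partial x^n(0)$, a quaternion independent of $I$ because the real axis lies in every slice. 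Re-inserting these $I$-independent coefficients yields $f(q)=\sum_n q^n a_n$ simultaneously on all slices, which is $(6)$.

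With the uniform expansion in hand, items $(4)$ and $(5)$ follow by splitting each $q^n a_n$ into its parts symmetric and antisymmetric in $y$: writing $(x+Iy)^n=P_n(x,y)+IQ_n(x,y)$ with $P_n,Q_n$ real and independent of $I$, one gets $f(x+Iy)=\alpha(x,y)+I\beta(x,y)$ with $\alpha=\sum_n P_n a_n$ even in $y$ and $\beta=\sum_n Q_n a_n$ odd in $y$; the Cauchy--Riemann system and the differentiability of $\alpha,\beta$ are inherited termwise from the slicewise holomorphy of $z^n$. Items $(1)$ and $(2)$ are then purely algebraic: evaluating $f(x+Iy)=\alpha+I\beta$ at $I=J$ and at $I=K$ gives $f(x+Jy)=\alpha+J\beta$ and $f(x+Ky)=\alpha+K\beta$, and using $J^{-1}=-J$, $K^{-1}=-K$ to eliminate first $\alpha$ and then $\beta$ produces $\alpha=(J-K)^{-1}[Jf(x+Jy)-Kf(x+Ky)]$ and $\beta=(J-K)^{-1}[f(x+Jy)-f(x+Ky)]$, which is $(2)$; the special choice $K=-J$ specializes to $(1)$.

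The main obstacle is coherence across slices: a priori the Splitting Lemma produces, for each $I$ separately, coefficients $a_n^{(I)}$, and the crux is to show they coincide. On a ball centered at a real point this is immediate, since every slice meets the real axis through the center and all expansions are therefore pinned down by the same real-variable Taylor series; for a general axially symmetric s-domain one must instead invoke connectedness and axial symmetry together with an identity-principle argument to propagate the agreement from the real axis to all of $U$. Since the statement here only requires balls (and cites the general case to \cite{CSS,GS}), I would carry out the ball case in full and quote the structural results of the cited references for the general symmetric s-domain.
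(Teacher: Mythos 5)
The paper gives no proof of this theorem at all: it is presented as a compilation of known results, each item carrying a citation to \cite{CSS} or \cite{GS}, so any comparison is necessarily with the proofs in those references rather than with an argument in the paper itself. Your reconstruction is essentially correct and is a legitimate alternative organization: the deduction of $(3)$ by decomposing $\mathbb{H}=\mathbb{C}_I\oplus\mathbb{C}_I J$ and noting that $\overline{\partial}_I$ commutes with right multiplication by the constant $J$ is exactly the standard argument; the passage $(3)\Rightarrow(6)$ via the slicewise Taylor expansions pinned down on the common real axis is correct; and the algebra extracting $\alpha$ and $\beta$ from $f(x+Jy)=\alpha+J\beta$, $f(x+Ky)=\alpha+K\beta$ (using $Jf(x+Jy)-Kf(x+Ky)=(J-K)\alpha$ and the specialization $K=-J$) does yield $(2)$ and $(1)$. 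The genuine difference from the literature is the direction of the logical chain: you route $(1)$, $(2)$, $(4)$, $(5)$ through the global power series $(6)$, which confines the self-contained part of your argument to balls centered at the origin, whereas the cited proofs establish the Representation Formula $(1)$ directly on an arbitrary axially symmetric s-domain (one checks that the right-hand side of \eqref{strutturaquat1} is slice regular in $q=x+Iy$, agrees with $f$ on $\mathbb{C}_J$, and then applies the identity principle, using that an s-domain meets the real axis), and only afterwards derive $(4)$ and $(5)$; that order is what makes the theorem valid on domains where no single power series converges globally. You flag this limitation yourself and correctly identify the identity-principle mechanism needed to repair it, and since the paper only ever applies the theorem on $B(0;1)$, your ball-case argument covers everything actually used; but as a proof of the theorem as stated for general $U$ it is incomplete without that final step, which you, like the paper, delegate to \cite{CSS}.
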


A useful subclass of slice regular functions is denoted by the letter $\mathcal{N}$ (see e.g. \cite{CSS}, p. 152, Definition 4.11.2) and it can be characterized in various ways.
Let $U$ be an open set in $\mathbb{H}$ and define
$$
 \mathcal{N}(U)=\{ f \ {\rm slice\ regular\ in}\ U\ :  \ f(U\cap \mathbb{C}_I)\subseteq  \mathbb{C}_I,\ \  \forall I\in \mathbb{S}\}.
$$
It is possible to prove  that if $U$ is an axially symmetric s-domain, then $f\in\mathcal{N}(U)$ if and only if it is of the form described in Theorem \ref{1.3} (3)
where $\alpha$, $\beta$ are real valued.\\
 Let us consider the ball $B(0;R)$ with center at the origin and radius $R>0$; it is immediate that  a function slice regular on $B(0;R)$ belong to $\mathcal{N}$ if and only if its
power series expansion has real coefficients. Thus the class $\mathcal{N}$ includes all elementary transcendental functions, like exponential, logarithm, sine, cosine, etc.\\
Finally, if we denote by $Z_{\mathbb{H}}$ the conjugate quaternion, that is $Z_{\mathbb{H}}(q)=\overline{q}$, it can be shown, see \cite{CGS}, that a function $f\in\mathcal{N}(U)$, where $U$ is an axially symmetric s-domain,
if and only is it satisfies $f=Z_{\mathbb{H}}\circ f\circ Z_{\mathbb{H}}$. This property is called in \cite{CGS} C-property, where "C" stands for conjugation.
In analogy to the terminology in the complex case, where functions satisfying $f(\bar z)=\overline{f(z)}$ or, equivalently, $f(z)=\overline{f(\bar z)}$  are called {\em intrinsic}, we will call these functions
{\em quaternionic intrinsic}.
Quaternionic intrinsic functions are the building blocks of slice regular functions in the sense of the following result:
\begin{prop} (See Proposition 3.12 in \cite{CGS})
Let $U$ be an axially symmetric s-domain and let $\{1,{\bf i},{\bf j},{\bf i}{\bf j}\}$ be a basis of $\mathbb{H}$, as a real vector space. Then the right vector space
 $\mathcal{R}(U)$ of slice regular functions on $U$ can be decomposed as:
$$\mathcal{R}(U)=\mathcal{N}(U)  \oplus \mathcal{N}(U){\bf i}   \oplus  \mathcal{N}(U)  {\bf j}\oplus \mathcal{N}(U) \bf{ij}.
$$
\end{prop}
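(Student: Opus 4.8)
The plan is to combine the $(\alpha,\beta)$-representation of Theorem~\ref{1.3}(5) with the characterization of $\mathcal N(U)$ recalled above as those slice regular functions whose associated $\alpha,\beta$ are real-valued. Fix the real basis $e_0=1$, $e_1=\mathbf i$, $e_2=\mathbf j$, $e_3=\mathbf{ij}$ of $\mathbb H$. Given $f\in\mathcal R(U)$, Theorem~\ref{1.3}(5) yields differentiable $\alpha,\beta:D\to\mathbb H$ with $\alpha(x,y)=\alpha(x,-y)$, $\beta(x,y)=-\beta(x,-y)$, satisfying the Cauchy--Riemann system, and $f(x+Iy)=\alpha(x,y)+I\beta(x,y)$ for all $I\in\mathbb S$. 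I would expand these in the fixed basis, $\alpha=\sum_{l}\alpha_l e_l$ and $\beta=\sum_l\beta_l e_l$ with real-valued components $\alpha_l,\beta_l$, and set $f_l(x+Iy):=\alpha_l(x,y)+I\beta_l(x,y)$.

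First I would verify that $f=\sum_l f_l e_l$ and that each $f_l$ lies in $\mathcal N(U)$. Since the $\alpha_l,\beta_l$ are real they commute with every $I$, so
\[
\sum_{l=0}^{3} f_l(x+Iy)\,e_l=\sum_{l}(\alpha_l+I\beta_l)e_l=\sum_l\alpha_l e_l+I\sum_l\beta_l e_l=\alpha(x,y)+I\beta(x,y)=f(x+Iy).
\]
Because extracting real components is an $\mathbb R$-linear operation, each pair $(\alpha_l,\beta_l)$ automatically inherits the even/odd symmetry and the Cauchy--Riemann system satisfied by $(\alpha,\beta)$; hence by Theorem~\ref{1.3}(5) every $f_l$ is slice regular, and since its $\alpha_l,\beta_l$ are real-valued the quoted characterization gives $f_l\in\mathcal N(U)$. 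This establishes $\mathcal R(U)=\mathcal N(U)+\mathcal N(U)\mathbf i+\mathcal N(U)\mathbf j+\mathcal N(U)\mathbf{ij}$.

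Next I would prove that the sum is direct. Suppose $g_l\in\mathcal N(U)$ satisfy $\sum_l g_l e_l\equiv 0$, and write $g_l(x+Iy)=\gamma_l(x,y)+I\delta_l(x,y)$ with $\gamma_l,\delta_l$ real. The same computation shows $A+IB=0$ for every $I\in\mathbb S$, where $A:=\sum_l\gamma_l e_l$ and $B:=\sum_l\delta_l e_l$ depend on $(x,y)$ but not on $I$. Evaluating at two distinct units $I\ne I'$ gives $(I-I')B=0$, and since the nonzero quaternion $I-I'$ is invertible, $B=0$ and hence $A=0$; as $\{e_l\}$ is an $\mathbb R$-basis, all $\gamma_l,\delta_l$ vanish, so $g_l\equiv 0$.

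The one genuinely delicate point is the noncommutative bookkeeping: the whole argument hinges on the components $\alpha_l,\beta_l$ (and $\gamma_l,\delta_l$) being \emph{real}, which is exactly what lets the generic imaginary unit $I$ be pulled through the finite sums in both the reconstruction and the directness step; expanding in a basis with quaternionic coefficients would destroy this. I would also stress that the $\alpha,\beta$ of Theorem~\ref{1.3}(5) do not depend on $I$, so all identities hold uniformly in $I\in\mathbb S$, which is precisely what the directness argument exploits. Finally, in the special case $U=B(0;R)$ the proof shortcuts through the power series of Theorem~\ref{1.3}(6): writing $a_n=\sum_l a_n^l e_l$ with $a_n^l\in\mathbb R$ gives $f(q)=\sum_l\big(\sum_n q^n a_n^l\big)e_l$, and each $f_l(q)=\sum_n q^n a_n^l$ has real coefficients, hence belongs to $\mathcal N$ by the criterion recalled above.
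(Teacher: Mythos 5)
Your proof is correct. Note that the paper itself gives no argument for this proposition: it is imported verbatim from Proposition 3.12 of \cite{CGS}, so there is no internal proof to compare against; your reconstruction via the $(\alpha,\beta)$-representation of Theorem~\ref{1.3}(5), splitting $\alpha$ and $\beta$ into real components along the fixed basis and using the real-valuedness characterization of $\mathcal{N}(U)$, is essentially the standard argument used in that reference. Both halves are sound --- the reconstruction uses exactly the commutation of real scalars with $I$ that you flag, and the directness step correctly exploits the $I$-independence of $A$ and $B$ together with invertibility of $I-I'$ (at real points, where only one evaluation is available, $B$ vanishes anyway by the oddness of $\beta$ in $y$).
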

Functions belonging  to the class $\mathcal{N}$ have nice properties, see for example \cite{CS} for applications to the spectral mapping theorem. It is also important to mention that, in general, the composition of two slice regular functions is not slice regular. However we have
\begin{thm} Let $f\in\mathcal R(U)$ and $g\in\mathcal N(V)$ be such that $g(V)\subseteq U$. Then $f\circ g\in\mathcal R(V)$.
\end{thm}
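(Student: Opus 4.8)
The plan is to verify slice regularity one slice at a time, thereby reducing the quaternionic statement to the classical fact that a composition of holomorphic functions of one complex variable is holomorphic. Fix an imaginary unit $I\in\mathbb S$. By the definition of slice regularity it suffices to check that the restriction $(f\circ g)_I$ to $V\cap\mathbb C_I$ satisfies $\overline{\partial}_I(f\circ g)_I=0$, together with the real differentiability of $f\circ g$; the latter is immediate, since $f$ and $g$ are real differentiable and the composition of real-differentiable maps is real differentiable by the ordinary chain rule.

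The decisive use of the hypothesis $g\in\mathcal N(V)$ is that $g$ preserves each slice, that is $g(V\cap\mathbb C_I)\subseteq\mathbb C_I$; combined with $g(V)\subseteq U$ this yields a restriction $g_I\colon V\cap\mathbb C_I\to U\cap\mathbb C_I$. Identifying $\mathbb C_I$ with $\mathbb C$ via $I\mapsto i$, the map $g_I$ is an ordinary holomorphic function of one complex variable, because $\overline{\partial}_I g_I=0$. This is exactly the point at which membership in $\mathcal N$ cannot be dropped: for a general slice regular $g$ the image $g(V\cap\mathbb C_I)$ would escape the plane $\mathbb C_I$, and the ensuing composition argument would collapse.

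I would then bring in the Splitting Lemma (Theorem \ref{1.3}(3)): choosing $J\in\mathbb S$ with $J\perp I$, write $f_I=F+GJ$ with $F,G\colon U\cap\mathbb C_I\to\mathbb C_I$ holomorphic. Since $g_I$ takes values in $U\cap\mathbb C_I$, the composition is well defined and
\[
(f\circ g)_I(z)=f_I\big(g_I(z)\big)=F\big(g_I(z)\big)+G\big(g_I(z)\big)\,J,\qquad z\in V\cap\mathbb C_I.
\]
Both $F\circ g_I$ and $G\circ g_I$ are holomorphic and $\mathbb C_I$-valued, being compositions of holomorphic functions of one complex variable; hence $(f\circ g)_I$ is again in splitting form, and by the Splitting Lemma this is equivalent to $\overline{\partial}_I(f\circ g)_I=0$. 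As $I\in\mathbb S$ was arbitrary, $f\circ g\in\mathcal R(V)$.

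The only genuine care required is bookkeeping with the noncommutative product: inside a single plane $\mathbb C_I$ all factors commute and the classical one-variable theory (together with its chain rule) applies verbatim, while the unit $J$ multiplies on the right and never interferes with the $\mathbb C_I$-holomorphicity of $F\circ g_I$ and $G\circ g_I$. Equivalently, one may argue by a direct computation: writing $g(x+Iy)=\alpha(x,y)+I\beta(x,y)$ with $\alpha,\beta$ real valued (Theorem \ref{1.3}(5)) and applying the chain rule to $f_I(\alpha+I\beta)$, one finds
\[
\overline{\partial}_I(f\circ g)_I=\tfrac12\big[(\alpha_x-\beta_y)+(\beta_x+\alpha_y)I\big]\,f'\big(g(x+Iy)\big),
\]
which vanishes precisely because $\alpha,\beta$ satisfy the Cauchy--Riemann system $\alpha_x-\beta_y=0$, $\beta_x+\alpha_y=0$ of Theorem \ref{1.3}(5). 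I expect the Splitting Lemma route to be the cleaner one to write up, the direct computation serving mainly as a check.
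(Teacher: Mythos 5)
Your argument is correct, but note that the paper itself gives no proof of this theorem: it is stated as a known fact (the composition result for quaternionic intrinsic functions, essentially from the references on the class $\mathcal N$), so there is no in-paper proof to compare against. What you write is the standard argument: the hypothesis $g\in\mathcal N(V)$ is used exactly where it must be, to get $g_I\colon V\cap\mathbb C_I\to U\cap\mathbb C_I$, after which the Splitting Lemma of Theorem \ref{1.3}(3) reduces everything to the classical composition of holomorphic functions, with the unit $J$ harmlessly on the right; your closing chain-rule identity (with the bracket acting on $\partial_s f$ from the left, and the real scalars $\alpha_x-\beta_y$, $\beta_x+\alpha_y$ commuting past $I$) is also correct and gives an equivalent direct verification. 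The only cosmetic caveat is that Theorem \ref{1.3}(3) is stated in the paper for axially symmetric s-domains, whereas the splitting $f_I=F+GJ$ with $F,G$ holomorphic is in fact a purely local consequence of $\overline{\partial}_If_I=0$ and the decomposition $\mathbb H=\mathbb C_I\oplus\mathbb C_IJ$, so it applies to the general open sets $U$, $V$ of the statement; it would be worth one sentence to say so.
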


\section{Univalence of Slice Regular Functions}

\begin{defn}Let $f\in\mathcal {R}(B(0;1) )$, where $B(0;1)=\{q\in\mathbb H\, ;\  \| q\|<1\}$, then :
\begin{enumerate}
\item $f$ is called  univalent in $B(0;1)$ if it is injective in $B(0;1)$ ;
\item For $I\in \mathbb S$,  $f$ is $I$-univalent if $f\mid_{\mathbb D_I}$ is univalent, where  $\mathbb D_I =B(0;1)\cap \mathbb C_I$.
\end{enumerate}
\end{defn}
\begin{rem}\label{rem001} {} \ \begin{enumerate}
\item  From the definition it is clear that if $f\in\mathcal{R}(B(0;1)) $ is univalent then f is $I$-univalent  for any $I\in\mathcal S$.
    \\
    Consider  the slice regular function  $$f(q)=q^2 + qJ, \quad q\in  B(0;1),$$
    where  $J\in \mathbb S$ is a fixed element.
Then $f$ is $I$-univalent for any $I\in \mathbb S\setminus\{J\}$ and for $z\in \mathbb D_J$ the restriction
$$f\mid_{\mathbb D_J}(z)=z^2 + zJ, \quad z\in \mathbb D_J,$$
is not univalent. We conclude that $f$ is not $J$-univalent and therefore is not univalent in $B(0;1)$.

\item Consider $f\in\mathcal{N}(B(0;1)) $ and  suppose  $I\in \mathbb S$ such that $f$ is an $I$-univalent function.  As $f$ can be expressed by $f(q) = \alpha(x,y) + I \beta (x,y )$, for all $q=x+I y \in \mathbb B(0;1) $,   where $\alpha $ and $\beta$ are real valued functions,  then $\alpha$ and $\beta$ satisfy the condition : if $x_{i}^{2}+y_{i}^{2}<1$, $i=1, 2$ and $(x_{1}, y_{1})\neq (x_{2}, y_{2})$  then $\alpha(x_{1}, y_{1})\neq \alpha(x_{2}, y_{2})$ or $\beta(x_{1}, y_{1})\neq \beta(x_{2}, y_{2})$.
    This clearly implies that $f$ is $J$-univalent for all $J\in \mathbb S$. Finally, if $w,q$ belong to different slices  then  $f(w)\neq f(q)$. Therefore $f$ is univalent too. For example, the functions   $$f(q) = q  (1-q)^{-1}, \quad g (q) = q  (1-q^2)^{-1}, \quad \quad h(q) = q - \frac{1}{2}q^2,  \quad q\in \mathbb B(0;1)$$
    are univalent.
\end{enumerate} \end{rem}

Suppose that $f:B(0 ; 1)\to \mathbb{H}$ is a slice (left) regular function, i.e. $f(q)=\sum_{k=0}^{\infty}q^{k}a_{k}$,
for all $q\in B(0 ; 1)$.
A natural question is if the condition ${\rm Re}[\partial_{s}(f)(q)]>0$, for all $q\in B(0;1)$ implies that $f$ is univalent (injective) on $B(0;1)$,
as it happens in the complex variable case.

Partial answers to this question are the following results.

\begin{thm}\label{2.2}
Let $f:B(0 ; 1)\to \mathbb{H}$ be a slice (left) regular function, i.e. $\displaystyle f(q)=\sum_{k=0}^{\infty}q^{k}a_{k}$, for all
$q\in B(0 ; 1)$. Suppose also that $Re[\partial_{s}(f)(q)]>0$ for all $q\in B(0 ; 1)$. Then one has the following properties
\begin{enumerate}
\item For any $I\in \mathbb{S}$, $f$ is univalent on $B(0;1)\bigcap \mathbb{C}_{I}$.
\item   Suppose that  $a_0=0  $  and  $a_1=1$, then $\displaystyle \frac{1- \|q\|}{ 1+\|q\|} \leq {\rm Re}[\partial_s f(q)] \leq \|\partial_s f(q)\|\leq \frac{1+\|q\|}{1-\|q\|}, \quad q\in B(0;1).$
\\
If the first or the third inequality  becomes in  an equality for  some $q\neq 0$  then  there exist  $I\in \mathbb S$ and  $\theta\in\mathbb R$ such that
$$  f(q) =q + \sum_{n=2}^{\infty} q^n \frac{2 e^{I(n-1) \theta } }{n}, \quad q\in B(0;1).$$
Moreover,   $\|na_n\|\leq 2$ for each $n\in\mathbb N$. But if   for some $n_0\in\mathbb N$ one has that $\|a_{n_0}\|=\frac{2}{n_0} $  if and only if $$ \displaystyle a_{kn_0 +1} = \frac{ 2}{kn_0 +1} (   \frac{n_0a_{n_0}}{ 2} )^k$$ for each $k\in \mathbb N$.   Note that if $\|a_2\|=1$   then there exist $I\in \mathbb S$ and $\theta\in\mathbb R$ such that
$$ f(q) = q + \sum_{n=2}^{\infty}  q^n \frac{2 e^{I(n-1) \theta } }{n}, \quad q\in B(0;1).$$
\end{enumerate}
\end{thm}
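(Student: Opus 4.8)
The plan is to transfer the classical theory of the Carath\'eodory class (holomorphic functions with positive real part) to the quaternionic setting one slice at a time, using the Splitting Lemma (Theorem \ref{1.3}(3)) together with one extra idea—an \emph{adaptive} choice of slice—to recover bounds on the full quaternionic norm from their scalar counterparts. Fix $I\in\mathbb S$ and $J\in\mathbb S$ with $J\perp I$, so that $\{1,I,J,IJ\}$ is an orthonormal basis of $\mathbb H$. By the Splitting Lemma, on $\mathbb D_I=B(0;1)\cap\mathbb C_I$ we may write $(\partial_s f)_I(z)=P(z)+Q(z)J$ with $P,Q\colon\mathbb D_I\to\mathbb C_I$ holomorphic. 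Since $Q(z)J$ lies in $\mathbb C_I J=\mathrm{span}_{\mathbb R}\{J,IJ\}$, which is orthogonal to the real axis, the key observation is that
\[
\mathrm{Re}\big[\partial_s f(q)\big]=\mathrm{Re}\big[P(z)\big],\qquad q=z\in\mathbb D_I .
\]
Thus the hypothesis $\mathrm{Re}[\partial_s f]>0$ says exactly that $P$ has positive real part on the disc $\mathbb D_I$, and $P(0)$ equals the $\mathbb C_I$-part of $\partial_s f(0)$. In part (2) this gives $P(0)=1$, i.e. $P$ lies in the Carath\'eodory class $\mathcal P$.

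For part (1) I would apply this to $f$ itself: writing $f_I=F+GJ$, the same computation gives $\mathrm{Re}[\partial_s f]=\mathrm{Re}[F']$, so $\mathrm{Re}[F']>0$ on the convex set $\mathbb D_I$. By the classical Noshiro--Warschawski theorem, $F$ is univalent on $\mathbb D_I$. Since $f_I(z_1)=f_I(z_2)$ forces $F(z_1)=F(z_2)$ (compare the $\mathbb C_I$- and $\mathbb C_I J$-components of the direct sum $\mathbb H=\mathbb C_I\oplus\mathbb C_I J$), injectivity of $F$ gives $z_1=z_2$; hence $f$ is univalent on $B(0;1)\cap\mathbb C_I$.

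For part (2) I would assume $a_0=0$, $a_1=1$, and set $\partial_s f(q)=\sum_{m\ge0}q^m c_m$, so that $c_0=1$ and $c_m=(m+1)a_{m+1}$. First I would prove the coefficient bounds $\|c_m\|\le2$ (equivalently $\|na_n\|\le2$) by the adaptive-slice trick: given $m$, choose $I=\mathrm{Im}(c_m)/\|\mathrm{Im}(c_m)\|\in\mathbb S$ (any $I$ if $c_m\in\mathbb R$), so that $c_m\in\mathbb C_I$ and therefore coincides with the $m$-th Taylor coefficient of the corresponding $P\in\mathcal P$; Carath\'eodory's lemma then yields $\|c_m\|=|c_m|\le2$. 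The third inequality is now immediate from the triangle inequality on the left power series:
\[
\|\partial_s f(q)\|\le\sum_{m\ge0}\|q\|^m\|c_m\|\le1+2\sum_{m\ge1}\|q\|^m=\frac{1+\|q\|}{1-\|q\|}.
\]
The second inequality is the trivial $\mathrm{Re}[w]\le\|w\|$, and the first follows by applying the sharp lower estimate $\mathrm{Re}[P(z)]\ge\frac{1-|z|}{1+|z|}$ for the class $\mathcal P$ on the slice through $q$, using $\mathrm{Re}[\partial_s f(q)]=\mathrm{Re}[P(z)]$ and $\|q\|=|z|$.

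The delicate part—and the step I expect to be the main obstacle—is the rigidity. If the third inequality is an equality at some $q_0\ne0$, then equality throughout the displayed chain forces $\|c_m\|=2$ for every $m\ge1$ and, by the equality case of the triangle inequality in the Euclidean space $\mathbb H\cong\mathbb R^4$, the quaternions $q_0^m c_m$ are all nonnegative multiples of a common unit; writing $q_0=re^{I\theta}$ and using $c_0=1$ to identify that unit with $1$, this pins down $c_m=2e^{Im\theta}\in\mathbb C_I$ (the saturation $\|c_m\|=|\mathrm{proj}_{\mathbb C_I}c_m|=2$ automatically annihilates the $\mathbb C_I J$-component). If instead the first inequality is an equality at $q_0\ne0$, then $\mathrm{Re}[P]$ attains its minimal Carath\'eodory value at an interior point, which by the classical rigidity forces $P(z)=\frac{1+e^{I\theta}z}{1-e^{I\theta}z}$ on that slice, whence $|c_m|=2$ for all $m$, and the same saturation argument gives $c_m=2e^{Im\theta}$. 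Translating $c_m=(m+1)a_{m+1}$ back to $f$ then yields $f(q)=q+\sum_{n\ge2}q^n\frac{2e^{I(n-1)\theta}}{n}$. Finally, the single-coefficient statement is the quaternionic analogue of the fact that $|c_{m_0}|=2$ for one index forces $\partial_s f$ to be $\frac{1+\eta q^{m_0}}{1-\eta q^{m_0}}$ with $\eta=c_{m_0}/2$; saturating $\|c_{m_0}\|$ via the adaptive slice places $c_{m_0}\in\mathbb C_I$, reduces the claim to Carath\'eodory's single-coefficient rigidity for $P$, and produces $c_{km_0}=2(c_{m_0}/2)^k$, which is the asserted relation among the $a_n$ (in particular the case $\|a_2\|=1$ saturates $\|c_1\|=2$ and so determines \emph{all} coefficients, returning the extremal $f$ above). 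The care required throughout is precisely in managing noncommutativity when passing between the left series $\sum q^m c_m$ and the slice components $P,Q$, and in verifying that equality in the quaternionic triangle inequality forces genuine alignment of the terms.
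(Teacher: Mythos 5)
Your proposal is correct, but it follows a genuinely different route from the paper's. For part (1) the paper argues directly in the quaternions: it parametrizes the segment $z(t)=tz_2+(1-t)z_1$ inside a fixed slice, uses commutativity in $\mathbb{C}_I$ to get $\frac{d}{dt}f(z(t))=z_0\,\partial_s f(z(t))$, and concludes $f(z_2)-f(z_1)=z_0\int_0^1\partial_s f(z(t))\,dt\neq 0$ from positivity of the real part of the integral -- i.e.\ it reruns the Noshiro--Warschawski proof rather than quoting it; you instead split $f_I=F+GJ$, observe $\mathrm{Re}[\partial_s f]=\mathrm{Re}[F']$, quote the complex theorem for $F$, and deduce injectivity of $f_I$ from that of its $\mathbb{C}_I$-component. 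For part (2) the paper simply invokes the quaternionic Carath\'eodory theorem of Ren--Wang (cited as \cite{gr}) as a black box, which delivers both the inequality chain and the rigidity statement $\partial_s f=(1-qe^{I\theta})^{-*}*(1+qe^{I\theta})$ in one stroke, and then only has to expand that $*$-product into a power series to read off the coefficients; you rebuild the quaternionic statement from the classical complex Carath\'eodory class via the Splitting Lemma, the adaptive choice of slice $I=\mathrm{Im}(c_m)/\|\mathrm{Im}(c_m)\|$ to get $\|c_m\|\le 2$, the triangle-inequality saturation/alignment argument in $\mathbb{R}^4$ for the upper-bound equality case, and the observation that $|\mathrm{proj}_{\mathbb{C}_I}c_m|=2$ together with $\|c_m\|\le 2$ annihilates the $\mathbb{C}_IJ$-component. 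Your version is more self-contained (it does not presuppose the quaternionic Carath\'eodory theorem) at the cost of these extra alignment arguments, all of which are sound; the paper's version is shorter but leans entirely on the external reference. One cosmetic remark: your derivation of the single-coefficient rigidity yields $c_{km_0}=2(c_{m_0}/2)^k$ with $m_0=n_0-1$, which is the correct classical index pattern; the paper's displayed formula $a_{kn_0+1}=\frac{2}{kn_0+1}(\frac{n_0a_{n_0}}{2})^k$ uses the index $kn_0$ rather than $k(n_0-1)$, so your computation actually flags a likely indexing slip in the statement rather than a flaw in your argument.
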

\begin{proof}
\begin{enumerate}
\item  Let $z_{1}, z_{2}\in \mathbb{H}$ be with $z_{1}\not=z_{2}$. Denoting
$z(t)=t z_{2}+(1-t)z_{1}$, $t\in [0, 1]$, we have
$$\frac{d[(z(t))^{k}]}{d t}=\sum_{p=0}^{k-1}z(t)^{p}\cdot z_{0}\cdot z(t)^{k-1-p}, \, \mbox{ where } z_{0}=z_{2}-z_{1}, k\ge 1.$$
Let $z_{1}, z_{2}\in B(0;1)\bigcap \mathbb{C}_{I}$ be with an arbitrary $I\in \mathbb{S}$, $z_{1}\not=z_{2}$.
Since obviously
$z_{0}\in \mathbb{C}_{I}$ and $z(t)\in \mathbb{C}_{I}$ for all $t\in [0, 1]$, due to the commutativity (in $\mathbb{C}_{I}$) we easily get $\frac{d[(z(t))^{k}]}{d t}=z_{0}\cdot k z(t)^{k-1}$
which (due to the uniform convergence of the series) immediately implies that
$$\frac{d [f(z(t))]}{d t}=\sum_{k=0}^{\infty}\frac{d[(z(t))^{k}]}{d t} a_{k}=z_{0}\cdot \sum_{k=1}^{\infty}k z(t)^{k-1}a_{k}
=z_{0} \cdot \partial_{s}(f)(z(t)).$$
Reasoning now as in the complex variable case, we get
$$f(z_{2})-f(z_{1})=\int_{0}^{1}\frac{d [f(z(t))]}{d t} d t = z_{0}\int_{0}^{1}\partial_{s}(f)(z(t)) d t.$$
Since $z_{0}\not=0$, this implies
$z_{0}^{-1}\cdot [f(z_{2})-f(z_{1})]=\int_{0}^{1}\partial_{s}(f)(z(t)) d t$ and
$${\rm Re} [z_{0}^{-1}\cdot [f(z_{2})-f(z_{1})]={\rm Re}\left [\int_{0}^{1}\partial_{s}(f)(z(t)) d t\right ]>0,$$
that is $f(z_{2})-f(z_{1})\not=0$.
\item From the Carath\'eodory theorem for slice regular functions, see \cite[Theorem 3]{gr},  $0 < {\rm Re}\, \partial_s f (q)$  for each $q\in B(0;1) $  implies
$$     \frac{ 1- \|q\|} { 1+\|q\|} \leq  {\rm Re}\, \partial_s f(q) \leq \| \partial_s f(q)\| \leq \frac{ 1+ \|q\|} { 1-\|q\|}, \quad q\in B(0;1), $$
and if the first or the third inequality  becomes in  an equality for  some $q\neq 0$  then
$$ \partial_s f(q) =  (1-qe^{I\theta})^{-\ast}*(1+ qe^{I\theta} ), \quad q\in B(0;1),$$
 for some  $I\in \mathbb S$ and  $\theta\in\mathbb R$.
In particular, if  $q\in \mathbb D_{I} $  one obtains   $$\partial_s f(q) = (1+ q e^{I\theta}  ) \frac{ 1}{1-qe^{I\theta} } = (1+qe^{I\theta}) \sum_{n=0}^{\infty} q^n e^{I n \theta} ,$$
 $$=  \sum_{n=0}^{\infty} q^n e^{I n \theta} +   \sum_{n=0}^{\infty} q^{n+1} e^{I (n+1) \theta} =  1 + \sum_{n=1}^{\infty} q^n e^{In \theta} +   \sum_{n=1}^{\infty} q^{n} e^{I{n} \theta} , $$
 $$=   1 +  2\sum_{n=1}^{\infty} q^n e^{I n \theta}  , \quad q\in B(0;1).$$
 By the uniqueness of the slice regular extension, the equality $\partial_s f(q) =   1 +  2\sum_{n=1}^{\infty} q^n e^{I n \theta}$ holds on $ B(0;1)$.
Thus, if $\displaystyle f(q) =q+ \sum_{n=2}^{\infty} q^n a_n$ then
$$  1 +  \sum_{n=2}^{\infty} q^{n-1} na_n  =\partial_s f (q) =  1 +  2\sum_{n=1}^{\infty} q^n e^{I n  \theta}  ,  $$
$$  1 +  \sum_{n=1}^{\infty} q^{n} (n+1)a_(n+1)  =\partial_s f (q) =  1 +  2\sum_{n=1}^{\infty} q^n e^{I n  \theta}. $$
Therefore $\displaystyle a_{n+1} =\frac{2 e^{In \theta}}{n+1} $ for each $n\in\mathbb N$, or equivalently   $\displaystyle a_{n} =\frac{2 e^{I(n-1)  \theta}}{n} $ for each $n=2,3,4, \dots$.
\item  The proof is a direct application to the Carath\'eodory theorem for slice regular functions to $\partial_s f$ and it follows with  computations very similar to those in the previous case.
\end{enumerate}
\end{proof}

In the special case of intrinsic functions, we have:

\begin{thm}\label{deriv}
Let $f\in\mathcal{N}(B(0;1))$ and suppose that there exists  $I\in \mathbb S $  with  ${\rm Re}[\partial_{s}(f)(q)]>0$ for all $q\in \mathbb D_I$. Then ${\rm Re}[\partial_{s}(f)(q)]>0$ for all $q\in B(0 ; 1)$ and  $f$ is univalent in
$B(0 ; 1)$.
\end{thm}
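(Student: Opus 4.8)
The plan is to reduce everything to two facts already available in the excerpt: the slice-independence of the real part of an intrinsic function, and the passage from $I$-univalence to univalence for functions in $\mathcal N$. The starting point is that $f\in\mathcal N(B(0;1))$ has a power series with real coefficients, whence $\partial_s f$, obtained by termwise differentiation, again has real coefficients and so $\partial_s f\in\mathcal N(B(0;1))$ as well. First I would invoke the representation in Theorem \ref{1.3}(4)--(5): writing $\partial_s f(x+Iy)=\alpha(x,y)+I\beta(x,y)$, the functions $\alpha,\beta$ are real valued and do not depend on $I\in\mathbb S$. Since $I\beta(x,y)$ is a purely imaginary quaternion, this gives ${\rm Re}[\partial_s f(x+Iy)]=\alpha(x,y)$, a quantity independent of the imaginary unit $I$.

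Next I would propagate the hypothesis to the whole ball. Any $q\in B(0;1)$ can be written as $q=x+Jy$ with $x^2+y^2<1$ and $J\in\mathbb S$; the point $x+Iy$ then lies in $\mathbb D_I$, where by assumption ${\rm Re}[\partial_s f]>0$. By the slice-independence just established, ${\rm Re}[\partial_s f(q)]=\alpha(x,y)={\rm Re}[\partial_s f(x+Iy)]>0$. Hence ${\rm Re}[\partial_s f(q)]>0$ for every $q\in B(0;1)$, which is the first assertion.

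Finally, I would deduce univalence. With the positivity of the real part of the slice derivative now holding on the entire ball, Theorem \ref{2.2}(1) gives that $f$ is univalent on $B(0;1)\cap\mathbb C_J$ for every $J\in\mathbb S$, i.e.\ $f$ is $J$-univalent. Since $f\in\mathcal N(B(0;1))$, Remark \ref{rem001}(2) upgrades $I$-univalence (even for a single slice) to genuine univalence on $B(0;1)$, and the proof is complete.

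The argument has no genuinely hard step; the only point that must be checked with care is that $\partial_s f$ inherits the intrinsic property, so that its real part is a single slice-independent real function $\alpha(x,y)$. Everything else is a bookkeeping application of the estimate in Theorem \ref{2.2} together with the intrinsic univalence criterion of Remark \ref{rem001}. I would emphasize that the role of the hypothesis $f\in\mathcal N$ is precisely to make the sign of the real part of the slice derivative a slice-independent condition, which is exactly what turns a one-slice assumption into a global one.
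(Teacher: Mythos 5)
Your proof is correct and follows essentially the same route as the paper: both arguments rest on the slice-independence of ${\rm Re}[\partial_s f]$ for intrinsic $f$ (via the $\alpha+I\beta$ representation) together with Remark \ref{rem001}(2) to pass from slice-univalence to univalence on the whole ball. The only cosmetic difference is that you invoke Theorem \ref{2.2}(1) for univalence on each slice after propagating the positivity, whereas the paper cites the classical complex Noshiro--Warschawski result on the single slice $\mathbb D_I$ first; the substance is identical.
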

\begin{proof}  From the result in the classical complex case we have that $f$ is $I$-univalent. The two statement follow from Remark \ref{rem001}, point 2 and previous theorem. Since for all $I\in \mathbb S$ one has  ${\rm Re}[\partial_s f(x+yI)] = \partial_x \alpha(x,y)  >0$ for all $x^2+y^2 <1$.
\end{proof}

\begin{ex} By the above Theorem \ref{deriv}, it follows that $f(q)=-2+2\sum_{k=1}^{\infty}\frac{q^{k}}{k}$ is an univalent function in $B(0 ; 1)$.
Indeed, it is known that
$f(z)=-2-2\log(1-z)$ is analytic in the open unit disk $\mathbb{D}_{1}$ and ${\rm Re}[f^{\prime}(z)]>0$, for all $z\in \mathbb{D}_{1}$, therefore it is univalent in $\mathbb{D}_{1}$. Also, since
$\log(1-z)=-\sum_{k=1}^{\infty}\frac{z^{k}}{k}$, we get $f(z)=-2+2\sum_{k=1}^{\infty}\frac{z^{k}}{k}$, for all $z\in \mathbb{D}_{1}$.
\end{ex}
\begin{rem} Although the above results are very simple, they seem to be very useful to produce easily many examples of injective functions in $B(0;1)$.
It is clear that an attempt to prove directly the injectivity on $B(0;1)$ of $f$ in the above example seems to be a
difficult task.
\end{rem}
\noindent
In the sequel we will consider the function $$K(q)=q\cdot [(1-q)^{2}]^{-1}$$ that corresponds to the {\it quaternionic Koebe function}. It is immediate that $K(q)=\sum_{n=1}^\infty n q^n$ since the equality holds for the restriction to a complex plane $\mathbb C_I$ and $K(q)$ is slice regular and extends (uniquely) $K(z)$, $z\in\mathbb C_I$.
\begin{ex} Let $f(q)=q+\sum_{k=2}^{\infty}q^{k} a_{k}$ be a slice regular function on $B(0;1)$, normalized, i.e. $f(0)=1-\partial_{s}(f)(0)=0$.
If $\varphi \in \mathbb{R}$ and $I\in \mathbb{S}$, then $R_{\varphi, I}(q)=e^{-I \varphi}\cdot f(e^{I \varphi} \cdot q)=q\left [1+\sum_{k=1}^{\infty}(e^{I \varphi}\cdot q)^{k}a_{k+1}\right ]$ is called a rotation of $f$. A simple reasoning shows that
if $f$ is univalent on $B(0; 1)$ then for any $\varphi \in \mathbb{R}$, $I\in \mathbb{S}$, $r\in (0, 1)$, so is $R_{\varphi, I}$.
As an application, the rotation of the quaternionic Koebe function $K(q)=q\cdot [(1-q)^{2}]^{-1}$,
$$K_{\varphi, I}(q)=e^{-I \varphi}\cdot K(e^{I \varphi} \cdot q)=q[1+\sum_{k=2}^{\infty}k (e^{I \varphi}\cdot q)^{k-1}],$$
is univalent as function of $q$ (but not regular) on the ball $B(0;1)$, because according to Theorem \ref{deriv}, the Koebe function
$f(q)=q+2q^{2}+3q^{3}+...+$ is univalent on $B(0;1)$ (see also Section 5, before Remark \ref{4.10}). Note that here for all
$a, b\in \mathbb{H}$ we have $(a\cdot b)^{k}=(a\cdot b)\cdot (a\cdot b)\cdot .... \cdot (a\cdot b)$ of $k$-times.
\end{ex}

\begin{rem}\label{geobound}
From $f\in\mathcal{N}(B(0;1))$ it is clear that in Theorem \ref{deriv} we have $\partial_{s}(f)(q)=f^{\prime}(q)$, $q\in B(0;1)$ and that $f^{\prime}\in\mathcal{N}(B(0;1))$. Considering now $f$ and $f^{\prime}$ on a slice $\mathbb{D}_{I}:=B(0;1)\bigcap \mathbb{C}_{I}$, from the complex case (see e.g. \cite{Moc}, p. 78), the condition ${\rm Re}[\partial_{s}(f)(q)]>0$ for all $q\in \mathbb D_I$ implies that $|{\rm arg}[f^{\prime}|_{I}(q)]|<\frac{\pi}{2}$, for all $q\in \mathbb D_I$. But by the definition of argument, see the Introduction, it is evident that ${\rm arg}[f^{\prime}|_{I}(q)]$ does not depend on $I\in \mathbb S$, which finally leads to the following geometric interpretation of the condition $Re[\partial_{s}(f)(q)]>0, q\in B(0;1)$ in Theorem \ref{deriv}:
$$|{\rm arg}[f^{\prime}(q)]| < \frac{\pi}{2}, \mbox{ for all } q\in B(0;1).$$
As in the complex case, a function satisfying the above inequality can be called {\it of bounded rotation}.
\end{rem}
\begin{defn} Let $I\in\mathbb S$ be a fixed element. We define the following sets
$$\mathcal{S}=\{ f\in \mathcal{R}(B(0;1)) \ \mid \ f  \ \textrm{ is univalent in } B(0;1) ,  \ f(0)=0, \  \partial_s f(0)=1\}  $$
and
$$\mathcal{S}_I = \{ f\in \mathcal{R}(B(0;1)) \ \mid \ f  \ \textrm{ is $I$-univalent in } B(0;1) ,  \ f(0)=0, \  \partial_s f(0)=1\}.  $$
\end{defn}
Note that $\mathcal{S}\subset \mathcal{S}_I$ for each $I\in \mathbb S$. The converse does not hold, as shown in Remark \ref{rem001}.
\\
As in the classical complex case, there exist some operators which preserve the set $\mathcal S$ as illustrated in the next result.

\begin{thm}\label{operations}
\begin{enumerate}\item If $0<r<1$, then $f\in \mathcal S$ if and only if $r^{-1} f(rq)\in \mathcal S$.
\item Let $u\in \mathbb H$ be such that $|u|=1$, let $f\in \mathcal S$ and $g(q) = uf(\bar u q u) \bar u$. Then $g\in\mathcal S$.
\item Let $f\in \mathcal{S}\cap \mathcal N(B(0;1))$ and $ g \in  \mathcal {R}( \ f(B(0;1) ) \ )$ an univalent function with $g(0)=0$ and $\partial_s g(0)=1$. Then $g\circ f\in \mathcal S$.
\item Let $f\in \mathcal{S}\cap \mathcal{N}(B(0,1)  )$ and $a\in \mathbb R \setminus (f(B(0;1))\cap\mathbb R)$. Then the function $g(q)= f(q)a \ast ( a - f(q))^{-*}, \quad  q\in  B(0;1)$, belongs to $\mathcal{S}\cap \mathcal{N}(B(0,1)  )$.
\item Let $f\in \mathcal{S}\cap \mathcal{N}(B(0,1)  )$, then  $g(q)= \sqrt{f(q^2)}$, belongs to $\mathcal{S}\cap \mathcal{N}(B(0,1)  )$.
\end{enumerate}
\end{thm}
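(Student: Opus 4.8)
The overall strategy is to reduce each statement, as far as possible, to the corresponding classical fact on a single slice $\mathbb C_I$, and then to globalize using the structure of the class $\mathcal N$ — in particular Remark \ref{rem001}(2), which guarantees that an $\mathcal N$-function that is $I$-univalent for some $I$ is automatically univalent on all of $B(0;1)$. Throughout I will use that a function slice regular on a ball lies in $\mathcal N$ precisely when its power series has real coefficients, and that on such functions the $*$-product and $*$-inverse are commutative and restrict, on each slice, to the ordinary pointwise product and reciprocal of the intrinsic holomorphic functions $f_I=f|_{\mathbb C_I}$.

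For (1) I would write $f(q)=\sum_{k\ge 1}q^k a_k$ with $a_1=1$ and compute $r^{-1}f(rq)=\sum_{k\ge1}q^k r^{k-1}a_k$, which has vanishing constant term and linear coefficient $1$; univalence is preserved because $q\mapsto rq$ is an injective slice regular self-map of $B(0;1)$ (multiplication by a real scalar) and scaling by $r^{-1}$ is injective, so the composition stays injective and normalized, the reverse implication following symmetrically via the inverse scaling. For (2) the key observation is the identity $(\bar u q u)^k=\bar u q^k u$, valid because $u\bar u=1$; substituting into the series gives $g(q)=\sum_k q^k(u a_k\bar u)$, so $g$ is slice regular with $g(0)=u a_0\bar u=0$ and $\partial_s g(0)=u a_1\bar u=u\bar u=1$, while injectivity follows since $q\mapsto \bar u q u$ is norm preserving and bijective on $B(0;1)$ and $f$ is univalent.

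For (3) slice regularity of $g\circ f$ is exactly the content of the composition theorem stated above, applied with inner function $f\in\mathcal N(B(0;1))$ and outer function $g\in\mathcal R(f(B(0;1)))$; injectivity is immediate as a composition of two injective maps, and the normalization $(g\circ f)(0)=0$, $\partial_s(g\circ f)(0)=1$ is read off from the lowest-order terms of the composed series $\sum_m (f(q))^m b_m$ using $f(q)=q+O(q^2)$ together with $b_0=0$, $b_1=1$. For (4) I would first show $a-f$ is nowhere zero on $B(0;1)$: since $f\in\mathcal N$ satisfies the C-property, it takes the real value $a$ only at real points, so $f(q)=a$ would force $q\in\mathbb R$ and hence $a\in f(B(0;1))\cap\mathbb R$, contrary to hypothesis. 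Consequently $(a-f)^{-*}$ exists and stays in $\mathcal N$, whence $g\in\mathcal N$; on each slice $g_I(z)=a f_I(z)\,(a-f_I(z))^{-1}$ is the classical Möbius image of the univalent value-$a$-omitting function $f_I$, hence univalent, so $g$ is $I$-univalent and, by Remark \ref{rem001}(2), univalent on $B(0;1)$, with $g(0)=0$ and $\partial_s g(0)=1$ computed from $a f_I/(a-f_I)=f_I+O(f_I^2)$.

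Finally, for (5) I would factor $f(q)=q\,h(q)$ with $h\in\mathcal N$, $h(0)=1$, and note that univalence of $f$ forces $f(q)\ne0$ for $q\ne0$, hence $h$ is nowhere zero on $B(0;1)$; the construction of a slice regular square root $\sqrt{h(q^2)}\in\mathcal N$ with value $1$ at the origin — obtained slice-wise from the holomorphic square root of the nowhere-vanishing intrinsic function $h_I$ on the simply connected disk $\mathbb D_I$ — is the step demanding the most care, and I regard it, together with the nowhere-vanishing in (4), as the principal obstacle. Granting it, $g(q)=q\sqrt{h(q^2)}$ lies in $\mathcal N$, satisfies $g(0)=0$, $\partial_s g(0)=1$, and is odd with $g(q)^2=f(q^2)$; $I$-univalence then follows from the classical square-root argument (if $g_I(z_1)=g_I(z_2)$ then $f(z_1^2)=f(z_2^2)$, so $z_1^2=z_2^2$ and $z_2=\pm z_1$, the minus sign being excluded since $g$ is odd and nonzero off the origin), and Remark \ref{rem001}(2) upgrades this to univalence on the whole ball.
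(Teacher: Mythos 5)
Your proposal is correct and follows essentially the same route as the paper: direct series computations for the dilation and the conjugation $q\mapsto \bar u q u$ (via $(\bar u q u)^k=\bar u q^k u$), the composition theorem plus the chain rule at $0$ for part (3), and for parts (4) and (5) the reduction to the classical one-variable results on a slice followed by the upgrade to full univalence via Remark \ref{rem001}(2). The extra details you supply (non-vanishing of $a-f$, the factorization $f(q)=qh(q)$ for the square root) are consistent with, and slightly more explicit than, the paper's argument.
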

\begin{proof} \ {}
\begin{enumerate}
\item The assertion immediately follows with direct computations. The function $r^{-1}f(rq)$ is called a dilation of $f$.
\item Consider  $f(q)= \sum_{n=0}^\infty  q^n a_n$ where $q\in B(0;1)$ and $(a_n)\subset\mathbb H$. Thus from the uniform convergence one  has  $$g(q)=uf (\bar u q u) \bar u=  u \sum_{n=0}^\infty  (\bar u q u)^n a_n \bar u =   \sum_{n=0}^\infty   q^n  u a_n \bar u, \quad q\in B(0;1) $$
 and so $g\in \mathcal R(B(0;1))$. Since $f\in\mathcal S$ we have $a_0=0$, $a_1=1$ and so $ua_0\bar u =0$ and $ua_1\bar u =1$. Let $q,r\in B(0;1) $ be such that   $g(q) = g(r)$. Then   $uf(\bar u q u)\bar u = uf(\bar u r u)\bar u $ or, equivalently,  $f(\bar u q u) =
f(\bar u r u)$.
 As $f\in \mathcal S$ one concludes that $q=r$, thus $g\in \mathcal S$.
\item As $f\in \mathcal N( B(0;1))$ then  $g\circ f \in \mathcal {R}(B(0;1))$ and  note that $\partial_s (g \circ f) (0)= \partial_s g (f(0)) \partial _s f(0) $. The univalence of $g\circ f$ is immediate.
\item From the usual complex case, see \cite{duren}, p. 27, point (vi), one has that $g \in \mathcal{S}_I\cap \mathcal{N}(B(0,1)  )$ for each $I\in\mathbb S$  and one concludes using  Remark \ref{rem001}, point 2.
\item Let us write $f(q)=q+a_2 q^2+ a_3q^3+\ldots$. Since $f$ vanishes just at the origin and reasoning on a fixed slice $\mathbb{C}_{I}$ as in the complex case in \cite{duren}, pp.27-28, we can choose the single-valued branch of the square root given by
    $$
    g(q)=q(1+a_2q^2+a_3q^4+\ldots)^{1/2}= q+ b_3q^3+ b_5 q^5+\ldots .
    $$
The fact that $g$ is univalent in $B(0;1)\bigcap \mathbb{C}_{I}$ can be proved as in the complex case, see \cite{duren}, p. 28. Namely, if $g(q_1)=g(q_2)$  with $q_1\not= q_2$ then $f(q_1^2)=f(q_2^2)$ but since $f$ is univalent this implies $q_1=-q_2$ and so $g(q_1)=-g(q_2)$ since $g$ is an odd function. Finally, the univalence of $g\in \mathcal{N}(B(0,1))$ on $B(0;1)$ follows from Remark \ref{rem001}, point 2.
\end{enumerate}
\end{proof}
A result ensuring univalence of a function is the following:

\begin{thm}
Let $f\in \mathcal{R} (B(0;1))$.  Assume that for any   non closed  $C^1$ curve  $\gamma$ with   $\vec\gamma\neq \vec 0$  for $t\in (0,1)$, one has that
$$   \gamma_0'(t)   \partial_{x_0} f_0 (\gamma(t)) +  \langle\   \vec{\gamma}'(t) \ , \  {\nabla} f_0 (\gamma(t)) \   \rangle_{\mathbb R^3}  > 0,  \quad \forall t\in (0,1) $$
where $f=f_0+\vec f$ and $\nabla$ is the gradient operator  in $\mathbb R^3$. Then $f$ is an univalent function in $B(0;1)$.
\end{thm}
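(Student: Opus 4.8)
The plan is to reduce the injectivity of $f$ on $B(0;1)$ to a sign condition along line segments, exactly as in the first part of Theorem \ref{2.2}. Writing $f = f_0 + \vec f$ with $f_0$ the real part and $\vec f$ the $\mathbb R^3$-valued imaginary part, I interpret $f$ as a map $\mathbb R^4 \to \mathbb R^4$ via the identification $q = x_0 + \vec x$. The hypothesis is a statement about the directional derivative of $f_0$ along any admissible curve $\gamma$: decomposing $\gamma = \gamma_0 + \vec\gamma$, the quantity $\gamma_0'(t)\,\partial_{x_0} f_0(\gamma(t)) + \langle \vec\gamma'(t), \nabla f_0(\gamma(t))\rangle_{\mathbb R^3}$ is precisely $\frac{d}{dt} f_0(\gamma(t))$, the derivative of the real component of $f$ along $\gamma$. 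So the hypothesis says that $t \mapsto f_0(\gamma(t))$ is strictly increasing along every non-constant $C^1$ curve in $B(0;1)$.

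First I would fix two distinct points $q_1, q_2 \in B(0;1)$ and join them by the straight segment $\gamma(t) = (1-t)q_1 + t q_2$, $t \in [0,1]$, which is a $C^1$ curve contained in $B(0;1)$ by convexity of the ball, with $\gamma'(t) = q_2 - q_1 \neq \vec 0$ constant. Applying the hypothesis gives $\frac{d}{dt} f_0(\gamma(t)) > 0$ for all $t \in (0,1)$, hence by integration $f_0(\gamma(1)) > f_0(\gamma(0))$, i.e. $f_0(q_2) > f_0(q_1)$. Since $q_1, q_2$ were arbitrary distinct points, this forces $f_0(q_1) \neq f_0(q_2)$; in particular the full quaternionic values satisfy $f(q_1) \neq f(q_2)$ because already their real parts differ. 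Therefore $f$ is injective, i.e. univalent, on $B(0;1)$.

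The main point to be careful about is the correct identification of the displayed expression with a genuine total derivative. I would verify by the chain rule that, for $q = x_0 + x_1 i + x_2 j + x_3 k$ and the real-valued $f_0$, one has $\frac{d}{dt} f_0(\gamma(t)) = \partial_{x_0} f_0(\gamma(t))\,\gamma_0'(t) + \sum_{\ell=1}^3 \partial_{x_\ell} f_0(\gamma(t))\,\gamma_\ell'(t)$, and that the last sum is exactly $\langle \vec\gamma'(t), \nabla f_0(\gamma(t))\rangle_{\mathbb R^3}$ with $\nabla$ the spatial gradient. This only uses that $f$ is real differentiable, which is part of the definition of slice regularity, so no further regularity input is needed. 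I would also note that the convexity of $B(0;1)$ is what makes the straight-segment curve available; the apparent generality of allowing arbitrary curves in the hypothesis is not needed for the conclusion, since a single family of segments already separates all pairs of points.

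The step I expect to require the most attention is purely a matter of bookkeeping rather than genuine difficulty: confirming that strict positivity on the open interval $(0,1)$, rather than the closed interval, still yields the strict inequality $f_0(q_2) > f_0(q_1)$ after integration. This is standard, since $\int_0^1 \frac{d}{dt} f_0(\gamma(t))\,dt = f_0(q_2) - f_0(q_1)$ and the integrand is continuous and strictly positive on $(0,1)$, forcing the integral to be strictly positive. There is no noncommutativity obstruction here precisely because the entire argument is carried out on the real scalar component $f_0$, sidestepping the multiplicative subtleties that complicate the quaternionic setting.
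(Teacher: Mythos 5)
Your central observation is correct and in fact streamlines the paper's argument: by the chain rule applied to the real component, the displayed hypothesis is precisely $\frac{d}{dt}f_0(\gamma(t))>0$, so integrating along an admissible curve from $q_1$ to $q_2$ gives $f_0(q_2)>f_0(q_1)$ and hence $f(q_1)\neq f(q_2)$. The paper reaches the very same identity ${\rm Re}\bigl(f(q_2)-f(q_1)\bigr)=\int_0^1\bigl(\gamma_0'(t)\partial_0 f_0(\gamma(t))+\langle \vec\gamma\,'(t),\nabla f_0(\gamma(t))\rangle\bigr)\,dt$ by a much longer route: it writes $f(q_2)-f(q_1)$ as $\int_0^1\sum_m\gamma_m'\partial_m f\,dt$, invokes the global operator identity $G[f]=0$ (valid for slice regular $f$) to eliminate $\partial_0 f$ in favour of the spatial derivatives and $\vec\gamma(t)^{-1}$, and only then extracts the real part. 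Your proof shows that none of this machinery, and in particular no slice regularity, is needed for the implication; only real differentiability of $f$ enters. On this point your route is genuinely more elementary and, I think, preferable.

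There is, however, one genuine gap. In the statement $\vec\gamma$ denotes the vector (imaginary) part of the curve, consistently with the decomposition $f=f_0+\vec f$ and with the paper's own proof, which assumes $\vec\gamma(t)\neq 0$ on $(0,1)$ precisely so that $\vec\gamma(t)^{-1}$ makes sense. You read the condition as $\gamma'(t)\neq 0$ (you write ``with $\gamma'(t)=q_2-q_1\neq\vec 0$''). Consequently your test curve, the straight segment from $q_1$ to $q_2$, need not be admissible: if $q_1$ and $q_2$ are both real, or more generally if the segment meets the real axis in its interior, the hypothesis gives you no information about that segment, and your claim that ``a single family of segments already separates all pairs of points'' fails. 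The repair is routine: the real axis has codimension $3$ in $B(0;1)\subset\mathbb R^4$, so any two distinct points of the ball can be joined by a non-closed $C^1$ curve whose vector part is nonzero for $t\in(0,1)$, and your integration argument applies verbatim to such a curve. (The paper's proof also tacitly assumes the existence of such a curve without comment.) With that one substitution your argument is complete.
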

\begin{proof}
Let $q_1, q_2 \in B(0;1) $  and   let $\gamma$ be a $C ^1$ curve in $B(0;1) $ such that $\vec \gamma (t) \neq 0$ for any $t\in (0,1)$ and  $\gamma (1) = q_2$, $\gamma(0) = q_1$. The fundamental theorem of calculus in one real variable implies
$$ f_k(q_2)- f_k(q_1) = \int_{0}^1  \sum_{m=0}^3 \gamma_m'(t) \partial_m (f_k)(\gamma (t) )  dt $$
for  $k=0,1,2,3$, where $\partial_m$ denotes $\partial_{x_m}$.  Thus
$$f(q_2)- f(q_1) = \int_{0}^1   \sum_{m=0}^3 \sum_{k=0}^3 \gamma_m'(t) \partial_m (f_k)(\gamma (t) )  e_k dt, $$
and using the linearity of the partial derivatives one has
\begin{equation}\label{equa}
f(q_2)- f(q_1) = \int_{0}^1   \sum_{m=0}^3 \gamma_m'(t) \partial_m (f)(\gamma (t) )  dt  .
\end{equation}
As $f\in \mathcal{R}(B(0;1))$, then $G[f]=0$ on $B(0;1)$,  where$$G[f] (q) = \|\vec q\|^2 \partial_0 f(q)  + \vec q \sum_{k=1}^3 q_m \partial_m f(q) =0, \quad\forall q\in B(0;1),$$
see \cite{CGS3}.
In particular,
\begin{equation}\label{equa2}
\partial_0 f (\gamma(t))=-\frac{ \vec{\gamma} (t)}{\|\vec\gamma(t)\|^2}  \sum_{k=1}^3 \gamma_m(t) \partial_m f(\gamma(t))=( \vec{\gamma} (t))^{-1}  \sum_{k=1}^3 \gamma_m(t) \partial_m f(\gamma(t)),
\end{equation}
for each  $t\in (0,1)$. As
$$\sum_{m=0}^3 \gamma_m'(t) \partial_m (f)(\gamma (t) )    = \gamma_0'(t) \partial_0 (f)(\gamma (t) )  + \sum_{m=1}^3 \gamma_m'(t) \partial_m (f)(\gamma (t) ) ,$$
and  replacing $\partial_0 (f)(\gamma(t))$ according to (\ref{equa2}), one obtains
$$\sum_{m=0}^3 \gamma_m'(t) \partial_m (f)(\gamma (t) )    =       \gamma_0'(t) (\vec{\gamma} (t))^{-1}  \sum_{m=1}^3 \gamma_m(t) \partial_m f(\gamma(t)) + \sum_{m=1}^3 \gamma_m'(t) \partial_m (f)(\gamma (t) )  $$
$$= \sum_{m=1}^3 \left[ \gamma_0'(t)( \vec{\gamma} (t))^{-1}   \gamma_m(t) +  \gamma_m'(t) \right] \partial_m (f)(\gamma (t) )     , \quad \forall t\in (0,1).$$
The real part of (\ref{equa}) equals
$${\rm Re}(f(q_2)- f(q_1) ) = {\rm Re}\left(  \int_0^1 \sum_{m=0}^3 \gamma_m'(t) \partial_m (f)(\gamma (t) )    dt \right) $$
$${\rm Re}(f(q_2)- f(q_1) ) = {\rm Re}\left(  \int_0^1  \sum_{k=1}^3 \left[ \gamma_0'(t)( \vec{\gamma} (t))^{-1}   \gamma_m(t) +  \gamma_m'(t) \right] \partial_m (f)(\gamma (t) )   dt \right) $$
$$=   \int_0^1 \left(    -\gamma_0'(t)        \langle \  \vec{\gamma} (t)^{-1}  \ , \  \sum_{k=1}^3  \gamma_m(t) \partial_m (\vec{f})(\gamma (t) )  \  \rangle  +      \sum_{k=1}^3  \gamma_m'(t)  \partial_mf_0(\gamma (t) )  \right)  dt $$
Multiplying  both sides of (\ref{equa2})  by $\vec{\gamma}(t)$  and considering its  vectorial  part, we have
$$       \partial_0 f_0 (\gamma(t))     \vec{\gamma} (t)     +   \left[    \vec{\gamma} (t)   ;   \partial_0 \vec{f}(\gamma (t)) \right] = \sum_{k=1}^3  \gamma_m(t) \partial_m (\vec{f})(\gamma (t) )  .  $$
 Therefore
$${\rm Re}(f(q_2)- f(q_1) )=   \int_0^1 \left(    -\gamma_0'(t)        \langle \  \vec{\gamma} (t)^{-1}  \ , \   \partial_0 f_0 (\gamma(t))     \vec{\gamma} (t)   \  \rangle  +      \sum_{k=1}^3  \gamma_m'(t)  \partial_m f_0(\gamma (t) )  \right)  dt $$
or, equivalently, $${\rm Re}(f(q_2)- f(q_1) )=   \int_0^1 \left(    \gamma_0'(t)          \partial_0 f_0 (\gamma(t))   +      \sum_{k=1}^3  \gamma_m'(t)  \partial_m f_0(\gamma (t) )  \right)  dt $$
$${\rm Re}(f(q_2)- f(q_1) )=   \int_0^1 \left(    \gamma_0'(t)          \partial_0 f_0 (\gamma(t))   +     \langle \  \vec{\gamma}'(t) \  , \  \vec{\Delta}  f_0(\gamma (t) ) \  \rangle  \right)  dt $$
Then
${\rm Re}(f(q_2)- f(q_1) ) >0$ and thus $f$ is univalent.
\end{proof}

 \begin{thm}\label{geom} Let $f\in \mathcal{S}\cap \mathcal N( B(0;1))$. We have:
\begin{enumerate}
\item $B(0;\frac{1}{4})\subset f(B(0;1))  ; $
\item
$$ \frac{1-r}{(1+r)^3} \le \|\partial_s f(q)\| \le \frac{1+r}{(1-r)^3} ,\quad    0<\|q\| = r < 1 ;$$
\item
$$ \frac{r}{(1+r)^2} \le \| f(q)\|  \le \frac{r}{(1-r)^2} ,\quad    0<\|q\| = r < 1 ;$$
\item
$$ \frac{1-r}{(1+r) } \le \| q \partial_s f(q)   \ast f(q) ^{-*}\|  \le \frac{1+r}{(1-r) } ,\quad    0<\|q\| = r < 1 ;$$
\item For each $I\in \mathbb S$,
$$r\int_{0}^{2\pi}   \|\partial_s f(re^{I\theta})\| d\theta \leq \frac{2\pi r(1+r)}{(1-r)^2}.$$
\end{enumerate}
\end{thm}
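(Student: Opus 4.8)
The plan is to reduce every estimate to a single complex slice and then invoke the corresponding classical theorems for the Schlicht class $S$. Fix $I\in\mathbb S$ and set $\mathbb D_I=B(0;1)\cap\mathbb C_I$. Since $f\in\mathcal N(B(0;1))$ its power series has real coefficients, so $f(q)=\sum_{n\ge 1}q^na_n$ with $a_n\in\mathbb R$, $a_0=0$, $a_1=1$; hence the restriction $f_I=f|_{\mathbb D_I}$ is $\mathbb C_I$-valued and, under the identification $\mathbb C_I\cong\mathbb C$, is literally the complex power series $\tilde f(z)=\sum_{n\ge 1}a_nz^n$. Because $f\in\mathcal S$ is univalent it is $I$-univalent (Remark \ref{rem001}), so $\tilde f$ is a classical normalized univalent function, $\tilde f\in S$. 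Three elementary observations make the translation exact: for $q=x+Iy$ one has $\partial_s f(q)=f_I'(q)=\tilde f'(q)$ and $\|f(q)\|=|\tilde f(q)|$, $\|\partial_s f(q)\|=|\tilde f'(q)|$, since the quaternionic norm restricted to $\mathbb C_I$ is the complex modulus; and since $f\in\mathcal N$, on the slice the $*$-product and the $*$-inverse reduce to the ordinary commutative product and inverse, so that $q\,\partial_s f(q)*f(q)^{-*}$ restricts to $z\tilde f'(z)/\tilde f(z)$. Finally, every $q\in B(0;1)$ lies on some slice $\mathbb C_{I_q}$ and all the bounds below depend only on $r=\|q\|$, so an estimate valid on each slice is automatically valid on the whole ball.

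With this dictionary, parts (1)--(4) are immediate transcriptions of classical results for $\tilde f\in S$. Part (2) is the distortion theorem $\frac{1-r}{(1+r)^3}\le|\tilde f'(z)|\le\frac{1+r}{(1-r)^3}$; part (3) is the growth theorem $\frac{r}{(1+r)^2}\le|\tilde f(z)|\le\frac{r}{(1-r)^2}$; part (4) is the two-sided bound $\frac{1-r}{1+r}\le\big|z\tilde f'(z)/\tilde f(z)\big|\le\frac{1+r}{1-r}$. Part (1), Koebe's one-quarter theorem, gives $\tilde f(\mathbb D_I)\supseteq\{w\in\mathbb C_I:|w|<\tfrac14\}$ for every $I$; taking the union over $I\in\mathbb S$ and using $f(B(0;1))=\bigcup_I f_I(\mathbb D_I)$ together with $B(0;\tfrac14)=\bigcup_I\{w\in\mathbb C_I:|w|<\tfrac14\}$ yields $B(0;\tfrac14)\subseteq f(B(0;1))$. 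In each case the estimate holds on an arbitrary slice and hence, as noted, for all $q\in B(0;1)$ with $\|q\|=r$.

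For part (5) the same reduction gives, for fixed $I$,
\[
r\int_0^{2\pi}\|\partial_s f(re^{I\theta})\|\,d\theta=r\int_0^{2\pi}|\tilde f'(re^{I\theta})|\,d\theta,
\]
which is exactly the length of the image under $\tilde f$ of the circle $|z|=r$. Here the distortion bound of part (2) is too lossy: inserting $|\tilde f'|\le\frac{1+r}{(1-r)^3}$ only produces $(1-r)^{-3}$, whereas the asserted bound $\frac{2\pi r(1+r)}{(1-r)^2}$ has $(1-r)^{-2}$. The refinement I would use is the pointwise logarithmic-derivative estimate of part (4), which gives $|\tilde f'(z)|\le\frac{1+r}{r(1-r)}\,|\tilde f(z)|$ on $|z|=r$, combined with the integral-mean bound $\frac{1}{2\pi}\int_0^{2\pi}|\tilde f(re^{I\theta})|\,d\theta\le\frac{r}{1-r}$ for $\tilde f\in S$ (a consequence of the classical comparison of integral means with the Koebe function, i.e. Baernstein's theorem). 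These combine to give
\[
r\int_0^{2\pi}|\tilde f'(re^{I\theta})|\,d\theta\le\frac{1+r}{1-r}\int_0^{2\pi}|\tilde f(re^{I\theta})|\,d\theta\le\frac{2\pi r(1+r)}{(1-r)^2}.
\]
The main obstacle is precisely this last step: one must discard the pointwise distortion estimate and instead control the $L^1$-mean of $|\tilde f|$ (equivalently, the length of the image curve) by the sharp $(1-r)^{-1}$-order bound, which is where the genuinely nontrivial input from the theory of univalent functions is needed.
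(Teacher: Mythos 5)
Your proposal is correct and follows essentially the same route as the paper: restrict to a slice $\mathbb{C}_I$, where the intrinsic function $f$ becomes a classical normalized univalent function of one complex variable, and invoke the Koebe one-quarter, distortion, growth and $zf'/f$ theorems from Duren, the estimates depending only on $r=\|q\|$ and hence holding on all of $B(0;1)$. The only divergence is item (5), which the paper settles by citing Duren's Theorem 2.9 while you reconstruct the argument from the $zf'/f$ bound together with the mean estimate $\frac{1}{2\pi}\int_0^{2\pi}|\tilde f(re^{I\theta})|\,d\theta\le \frac{r}{1-r}$; this is valid, but note that the latter is Littlewood's elementary integral-mean inequality (provable from the area theorem via the square-root transform), so invoking Baernstein's theorem is far more than is needed.
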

\begin{proof}As $f\in \mathcal N(B(0;1))$, since $f[B(0;1)\bigcap \mathbb{C}_{I}]\subset \mathbb{C}_{I}$ for every $I\in {\mathcal{S}}$, we can use on each slice the usual complex theory. In fact, most of them are metrical properties which do not depend on the choice of $I$.

1)  Apply  Theorem 2.3 (Koebe One-Quarter Theorem) in e.g. \cite{duren}. Note that since the quaternionic Koebe functions $K_{\theta}(q)=q\cdot [(1+qe^{I_{q}\theta})^{2}]^{-1}$, $\theta\in \mathbb{R}$ are not, in general, intrinsic functions (except the cases $\theta=p \pi$, $p\in \mathbb{Z}$), we have only $B(0;1/4)\subseteq \bigcap_{f\in \mathcal{S}\cap \mathcal N( B(0;1))}f(B(0;1))$ and not equality, as in the complex case.

2)  Apply Theorem 2.5 (Distortion Theorem) in e.g. \cite{duren} on each slice. As at the above point 2), the only intrinsic quternionic Koebe functions are
$K_{0}(q)=q\cdot [(1+q)^{2}]^{-1}$ and $K_{\pi}(q)=q\cdot [(1-q)^{2}]^{-1}$, which attaint equalities in the obtained inequalities only at the points $q=r$ and $q=-r$. For this reason, we do not have the situation in the complex case, when the equalities are attained if and only if $f$ is a suitable chosen complex Koebe function.

3) Apply Theorem 2.6 (Growth Theorem) in e.g. \cite{duren} on each slice and the remark on the Koebe's functions from the point 2.

4)  Apply Theorem 2.7 in e.g. \cite{duren} on each slice and again the remark on the Koebe's functions from the point 2.

5) Apply Theorem 2.9 in e.g. \cite{duren} on each slice.
\end{proof}
According to Definition 2.2 in \cite{CGS2}, we define another subset of slice regular functions:
$$\mathcal V_{I}( B(0;1))=\{f\in \mathcal{R}(B(0;1)) ; f(B(0;1)\cap \mathbb{C}_{I})\subset \mathbb{C}_{I}\}.$$
Therefore, we introduce the set
$$\mathcal{V}(B(0;1))=\bigcup_{I\in \mathcal{S}}\mathcal V_{I}( B(0;1)).$$
It is clear that as $\mathcal N( B(0;1))=\bigcap_{I\in \mathcal{S}}\mathcal V_{I}( B(0;1))$, the set $\mathcal{V}(B(0;1))$ is much larger than $\mathcal N( B(0;1))$.
According to Corollary 2.8 in \cite{CGS2}, we have $f\in \mathcal V_{I}( B(0;1))$ if and only if for all $q\in B(0;1)$,
$f(q)=\sum_{k=0}^{\infty}q^{k}a_{k}$, with $a_{k}\in \mathbb{C}_{I}$ for all $k=0, 1, ..., .$
\\
We have the following result:
\begin{thm}
Let $f\in \mathcal{S}\cap \mathcal V( B(0;1))$. We have :
\begin{enumerate}
\item If $f\in\mathcal V_I(B(0;1))$ and $f(B(0;1))   \   \subset \  \mathbb H \setminus \overline{B(0;1)}  $ is of the form
$$  f(q)=q +  \sum_{n=0}^{\infty } q^{-n} a_n, \quad q\in B(0;1),  $$
then the area of $f(B(0;1)\cap \mathbb C_I)$ is
$$  2\pi^2 (1- \sum_{n=1} n|a_n|^2) ; $$
\item (Bieberbach- de Branges Theorem) If $$f(q) = q + \sum_{n=2}^{\infty} q^n a_n, \quad q\in  B(0;1),$$
then  $|a_n| \le n$ for each $n=2,3,4, \dots$.
\end{enumerate}
\end{thm}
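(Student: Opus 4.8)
The plan is to reduce both statements to their classical one–complex–variable analogues by restricting to a single slice $\mathbb{C}_I$. The decisive observation is that if $f\in\mathcal V_I(B(0;1))$ then, by the characterization recalled before the theorem (Corollary 2.8 in \cite{CGS2}), the coefficients $a_n$ lie in $\mathbb C_I$; hence the restriction $f_I$ to $\mathbb D_I=B(0;1)\cap\mathbb C_I$, under the identification $\mathbb C_I\cong\mathbb C$, is a genuine holomorphic function of one complex variable whose Taylor (respectively Laurent) coefficients are exactly the quaternionic $a_n$. Moreover univalence of $f$ on $B(0;1)$ forces $I$–univalence, so $f_I$ is univalent on $\mathbb D_I$. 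Since the quantities $|a_n|$ and the planar area are intrinsic and do not depend on the chosen representation, every conclusion obtained for $f_I$ transfers verbatim to $f$.

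For part (1) I would compute the area directly by Green's theorem on the slice. Writing $w=f_I(z)$ and parametrizing the circle $|z|=r$ by $z=re^{I\theta}$, the area enclosed by the image curve equals $\frac{1}{2I}\oint_{|z|=r}\bar w\,dw$; substituting the Laurent expansion $w=z+\sum_{n\ge0}a_nz^{-n}$ and using the orthogonality relations $\int_0^{2\pi}e^{Im\theta}e^{-In\theta}\,d\theta=2\pi\delta_{mn}$, all cross terms vanish and one is left with the contribution of $z\bar z$ and the diagonal terms $-n|a_n|^2r^{-2n}$. This yields the area as a fixed constant times $\bigl(1-\sum_{n\ge1}n|a_n|^2\bigr)$ after letting $r$ tend to the boundary radius. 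I expect the only delicate points to be the justification of the termwise integration (uniform convergence of the series on compact subsets away from the pole) and the correct bookkeeping of orientation and of the boundary limit; once these are in place the formula drops out mechanically, and the nonnegativity of an area immediately gives the coefficient bound $\sum_{n\ge1}n|a_n|^2\le1$.

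For part (2), the Bieberbach--de Branges statement, the reduction is even cleaner. Since $f\in\mathcal S\cap\mathcal V(B(0;1))$ and $\mathcal V=\bigcup_I\mathcal V_I$, there is an $I\in\mathbb S$ with all $a_n\in\mathbb C_I$; the restriction $f_I$ is then holomorphic on $\mathbb D_I$, univalent there, and normalized by $f_I(0)=0$, $f_I'(0)=1$, i.e. $f_I$ belongs to the classical Schlicht class $\mathcal S$. The de Branges theorem (the former Bieberbach conjecture) then gives $|a_n|\le n$ for every $n\ge2$, and since these are precisely the quaternionic coefficients of $f$, the bound is proved.

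The main obstacle is not conceptual but lies in part (1): making the area computation rigorous requires care with the domain of convergence of the series with negative powers, with the precise meaning of the \emph{area} being measured, and with the limiting boundary curve; by contrast part (2) is essentially a citation of a deep classical result once the slice reduction is in place. I would therefore concentrate the effort on fixing the class of admissible $f$ and the boundary analysis in the first statement, and on checking only that the slice restriction genuinely lands in the classical univalent class for the second.
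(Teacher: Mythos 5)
Your proposal is correct and follows essentially the same route as the paper: since the coefficients of $f$ lie in a single complex plane $\mathbb{C}_I$, the restriction $f_I$ is a genuine univalent holomorphic function on the slice, and the two parts reduce to the classical Area Theorem and to the de Branges theorem, respectively. The only difference is that for part (1) you re-derive the Area Theorem via Green's theorem rather than citing it, which is just the standard proof of the result the paper invokes (and, like the paper's own proof, it produces the constant $\pi$ for the area of the omitted set rather than the $2\pi^2$ appearing in the statement).
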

\begin{proof}
1)  The function $f$, by assumption, has coefficients in $\mathbb C_I$. Theorem 2.1 (Area Theorem) in   \cite{duren}, gives that the area  $A(f(\mathbb B \cap \mathbb C_I) )$ is
$$A(f(\mathbb B \cap \mathbb C_I) ) =\pi (1- \sum_{n=1} n|a_n|^2).$$

2) Since the coefficients of $f$ belongs to a given complex plane $\mathbb C_I$, according to the result in e.g. \cite{DeBrange} applied on a chosen slice, we get the corresponding estimates for the coefficients of
the series development of $f$.
\end{proof}

\begin{rem}\label{inter}
It is worth noting that for any fixed $I\in \mathcal{S}$, the set $\mathcal V_{I}( B(0;1))$ can be
generated from the whole class of univalent analytic functions of complex
variable in the unit disk, by simply replacing in each such complex
function $z$ by $q$, and in the coefficients of the series development, the
small $i$ with the capital $I$. In this way, any univalent analytic function of complex variable in the unit disk with at least one non-real coefficient in its series development, generates an infinity of quaternionic functions in $\mathcal{V}(B(0;1))$.
\end{rem}

\noindent{\bf Open Question.} It is a natural question if $\mathcal{V}(B(0;1))$ is the largest class of univalent slice regular functions in which the Bieberbach - de Branges result holds.  There could exists $f\in \mathcal{S}$ which is not in $\mathcal{V}(B(0;1))$, for which this result does not hold.

\noindent {\bf Algebraic and Geometric Interpretations.} Some results of Theorem \ref{geom}, can be interpreted as geometric or algebraic properties of infinite differentiable injective transformations from $\mathbb{R}^{4}$ to $\mathbb{R}^{4}$. Indeed, any $f:B(0,1)\to \mathbb{H}$ can be written in the form
\begin{equation}\label{geo}
f(x_{1}+ix_{2}+jx_{3}+kx_{4})
\end{equation}
$$=P_{1}(x_1, x_2, x_3, x_4)+i P_{2}(x_1, x_2, x_3, x_4)+j P_{3}(x_1, x_2, x_3, x_4)
+k P_{4}(x_1, x_2, x_3, x_4),$$
with all $P_{k}(x_1, x_2, x_3, x_4)$, $k=1, 2, 3, 4$, real-valued, and therefore $f$ can be also be viewed as
the transformation
$$(x_1, x_2, x_3, x_4) \to$$
$$ (P_{1}(x_1, x_2, x_3, x_4), P_{2}(x_1, x_2, x_3, x_4), P_{3}(x_1, x_2, x_3, x_4),
P_{4}(x_1, x_2, x_3, x_4)).$$
Below we present a few illustrations.

\noindent 1) In the case of Theorem \ref{geom}, 2), a first geometric interpretation is the relationship
$$B(0;1/4)\subseteq \bigcap_{f\in \mathcal{S}\cap \mathcal N( B(0;1))}f[B(0;1)].$$
Note that due to the fact that most of the quaternionic Koebe functions are not intrinsic functions, the situation is different from the complex case, when above instead of $\subseteq$ we have equality.

Secondly, by identifying $q=x_{1}+ix_{2}+jx_{3}+kx_{4}\in \mathbb{H}$ with $(x_{1}, x_{2}, x_{3}, x_{4})$ and defining the Euclidean distance in $\mathbb{R}^{4}$ by $d_{\mathbb{R}^{4}}^{(E)}(0, q)=\sqrt{x_{1}^{2}+x_{2}^{2}+x_{3}^{2}+x_{4}^{2}}$, the inclusion in Theorem \ref{geom}, 2), can be written as follows : if $f\in \mathcal{S}\cap \mathcal N( B(0;1))$ then for any $r\in (0, 1/4)$, there exists $q$ with $d_{\mathbb{R}^{4}}^{(E)}(0, q)<1$ solution of the equation $d_{\mathbb{R}^{4}}^{(E)}(0, f(q))=r$.
For example, if we consider $f(q)=q+\frac{q^{2}}{4}$ which is in $\mathcal{S}\cap \mathcal N( B(0;1))$, since by simple calculation we get
$f=P_{1}+iP_{2}+jP_{3}+kP_{4}$ where $P_{1}=x_{1}+\frac{1}{4}\left (x_{1}^{2}-x_{2}^{2}-x_{3}^{2}-x_{4}^{2}\right )$, $P_{2}=x_{2}+\frac{x_{1} x_{2}}{2}$, $P_{3}=x_{3}+\frac{x_{1}x_{3}}{2}$, $P_{4}=x_{4}+\frac{x_{1}x_{4}}{2}$, it follows that for any $r\in (0, 1/4)$, the algebraic equation
$P_{1}^{2}+P_{2}^{2}+P_{3}^{2}+P_{4}^{2}=r^{2}$ has at least one solution $(x_{1}, x_{2}, x_{3}, x_{4})$ with $x_{1}^{2}+x_{2}^{2}+x_{3}^{3}+x_{4}^{2}<1$. It is clear that the attempt to solve this algebraic equation by direct methods  is not an easy task.

\noindent 2) In the case of Theorem \ref{geom}, 3), denoting by $R_{1}=\frac{r}{(1+r)^{2}}$ and $R_{2}=\frac{r}{(1-r)^{2}}$, a geometric interpretation of the inequalities is as follows :
$$B(0;R_{1})\subseteq \bigcap_{f\in \mathcal{S}\cap \mathcal N( B(0;1))}f[B(0;r)], \, \, \, \, \, \bigcup_{f\in \mathcal{S}\cap \mathcal N( B(0;1))}f[B(0;r)]\subseteq B(0;R_{2}).$$
Note again that due to the fact that most of the quaternionic Koebe functions are not intrinsic functions, the situation is different from the complex case, when above instead of $\subseteq$ we have equalities.

Also, with the notations from the above point, the inequalities in Theorem \ref{geom}, 3), can be written as follows :
for any $q$ with $0<d_{\mathbb{R}^{4}}^{(E)}(0, q)<1$ we have
$$\frac{1-d_{\mathbb{R}^{4}}^{(E)}(0, q)}{(1+d_{\mathbb{R}^{4}}^{(E)}(0, q))^{3}}\le d_{\mathbb{R}^{4}}^{(E)}(0, \partial_{s}f(q))\le \frac{1+d_{\mathbb{R}^{4}}^{(E)}(0, q)}{(1-d_{\mathbb{R}^{4}}^{(E)}(0, q))^{3}}.$$
Similarly, the inequalities in Theorem \ref{geom}, 4) can be written as
$$\frac{d_{\mathbb{R}^{4}}^{(E)}(0, q)}{(1+d_{\mathbb{R}^{4}}^{(E)}(0, q))^{2}}\le d_{\mathbb{R}^{4}}^{(E)}(0, f(q))\le \frac{d_{\mathbb{R}^{4}}^{(E)}(0, q)}{(1-d_{\mathbb{R}^{4}}^{(E)}(0, q))^{2}},$$
for any $q$ with $0<d_{\mathbb{R}^{4}}^{(E)}(0, q)<1$.

For particular functions $f$ as, for example, $f(q)=q+\frac{q^2}{4}$, these relationships become algebraic inequalities which are not easy
to be proved by direct methods.
%
%
%
%

\begin{thm}\label{area}(Area Theorem)
Let $f$ be a slice regular function on $\Delta:=\{ q\in \mathbb H \ \mid \ \|q\|>1 \}$, such that $$ f(q) = q+ \sum_{n=0}^{\infty} q^{-n} a_n.$$

 Assume that for some $I\in \mathbb S$ and $J\in\mathbb S$ orthogonal to $I$, one has $f_{|\mathbb D_I}=f_1+f_2J$ where $f_1(z)$ and $z+f_2(z)$ are univalent functions. Then
\begin{enumerate}
\item
$$\left(   \left[ \frac{1}{2} (f - If I)(\Delta_I) \right]^{c}\,  \right)  +m\left(    \left[ \left(  \mathcal I+ \frac{1}{2}( f  +IfI)
{\bar J}
\right)  (\Delta_I)   \right]^{c} \,  \right)      = \pi (2-\sum_{n=1}^{\infty}  n \|a_n\|^2 ).$$
where $ \Delta_I = \Delta \cap \mathbb C_I$, {}  $\mathcal I$ is the  identity function on $B(0;1)$ and $m$ is the  two dimensional outer measure.
Here $A^{c}$ denotes $\mathbb{H}\setminus A$.

\item We have the inequality
$$ \sum_{n=1}^{\infty}  n \|a_n\|^2  \leq 2 .$$

\item We have $\|a_1\|\leq \sqrt{2}$, with equality if and only if $f(q) = q+a_0 + a_1 q^{-1}$.
\end{enumerate}
\end{thm}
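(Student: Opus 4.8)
The plan is to reduce the whole statement to the classical Area Theorem on the exterior of the unit disc (Theorem 2.1 in \cite{duren}) by splitting $f$ on a single slice. First I would fix $I,J$ as in the hypothesis and apply the Splitting Lemma (Theorem \ref{1.3}(3)) on $\Delta$ to write $f_{|\Delta_I}=f_1+f_2J$ with $f_1,f_2:\Delta_I\to\mathbb{C}_I$ holomorphic. Writing each coefficient as $a_n=\alpha_n+\beta_nJ$ with $\alpha_n,\beta_n\in\mathbb{C}_I$ and substituting $q=z\in\mathbb{C}_I$ into the series $f(q)=q+\sum_{n\ge0}q^{-n}a_n$, I obtain $f_1(z)=z+\sum_{n\ge0}z^{-n}\alpha_n$ and $f_2(z)=\sum_{n\ge0}z^{-n}\beta_n$. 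Thus $f_1$ and $g:=\mathcal I+f_2$ are exactly of the normalized form $z+\sum_{n\ge0}c_nz^{-n}$ to which the Area Theorem applies, and by hypothesis both are univalent.

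Next I would record the algebraic identities recovering $f_1$ and $f_2$ from $f$. Since every $\alpha\in\mathbb{C}_I$ commutes with $I$ and $I^2=-1$, one has $If_1I=-f_1$; and because $IJ=-JI$ gives $IJI=J$, one gets $I(f_2J)I=f_2\,IJI=f_2J$. Hence $IfI=-f_1+f_2J$, so that $f-IfI=2f_1$ and $f+IfI=2f_2J$, i.e. on $\Delta_I$
$$f_1=\tfrac12(f-IfI),\qquad \mathcal I+f_2=\mathcal I+\tfrac12(f+IfI)\bar J,$$
which are precisely the two operators appearing in the statement (using $J\bar J=1$).

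Then I would apply the classical Area Theorem separately to $f_1$ and to $g=\mathcal I+f_2$. Denoting by $E_1,E_2$ the compact sets omitted inside $\mathbb{C}_I$, namely the complements in $\mathbb{C}_I$ of $f_1(\Delta_I)$ and $g(\Delta_I)$, the theorem yields $m(E_1)=\pi\bigl(1-\sum_{n\ge1}n|\alpha_n|^2\bigr)$ and $m(E_2)=\pi\bigl(1-\sum_{n\ge1}n|\beta_n|^2\bigr)$. Adding these and using that $\{1,I,J,IJ\}$ is orthonormal, so that $\|a_n\|^2=|\alpha_n|^2+|\beta_n|^2$, gives $m(E_1)+m(E_2)=\pi\bigl(2-\sum_{n\ge1}n\|a_n\|^2\bigr)$, which is part (1). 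Part (2) is then immediate from $m(E_1),m(E_2)\ge0$. For part (3) one has $\|a_1\|^2\le\sum_{n\ge1}n\|a_n\|^2\le2$; equality $\|a_1\|=\sqrt2$ forces $a_n=0$ for $n\ge2$ together with equality in both scalar Area Theorems, whose extremal maps are the degree-one functions $z+\alpha_0+\alpha_1z^{-1}$ with $|\alpha_1|=1$ and $z+\beta_0+\beta_1z^{-1}$ with $|\beta_1|=1$, that is $f(q)=q+a_0+a_1q^{-1}$.

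The computation is routine once the splitting is set up; the genuinely delicate points are twofold. First, one must make precise the meaning of the two-dimensional outer measure $m$ and of the complements $A^c$: these have to be read inside the slice $\mathbb{C}_I$, where the omitted sets $E_1,E_2$ are compact and of finite area, rather than in all of $\mathbb H$ (where the naive complement would be four-dimensional). Second, one must verify that the hypothesis on $f_1$ and $z+f_2$ is exactly the univalence required to invoke the classical theorem, and that passing from $f_2$, which lacks the leading $z$, to the normalized $g=\mathcal I+f_2$ loses nothing; this is precisely why the statement imposes univalence of $z+f_2$ rather than of $f_2$.
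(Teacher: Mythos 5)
Your proof is correct and follows essentially the same route as the paper: split $f_{|\Delta_I}=f_1+f_2J$, identify $f_1=\tfrac12(f-IfI)$ and $f_2=\tfrac12(f+IfI)\bar J$, apply the classical Area Theorem to $f_1$ and $\mathcal I+f_2$ on the slice, and add the two omitted areas using $\|a_n\|^2=|\alpha_n|^2+|\beta_n|^2$ (the paper phrases this last step via the parallelogram identity for $a_n\mp Ia_nI$, which is the same computation). Your treatment of parts (2) and (3) also matches the paper's, and is in fact spelled out in more detail.
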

\begin{proof}
(1) Let $I,J\in\mathbb S$ with $J\perp I$ and let $f_1,f_2 $ be holomorphic functions on $\Delta_I$  such that $f\mid_{\Delta_I} = f_1+f_2J$ such that $f_1(z)$ and $z+f_2(z)$ are univalent functions. Moreover, if $a_n = a_{1,n} + a_{2,n} J$ with $a_{k,n}\in \mathbb C_I$ for each $k=1,2$ and any $n\in \mathbb N\cup \{0\}$, then   $$ f_1(z) = z+ \sum_{n=0}^{\infty} z^{-n} a_{1,n},  \quad  z + f_2(z)=  z  + \sum_{n=0}^\infty  z^{-n}a_{2,n} , $$ on  $\Delta_I$.  Thus, since the above functions  $f_1$, $\mathcal I+f_2$ have a simple pole at infinity with residue 1 (where  $\mathcal I$ represents the  identity function on $B(0;1)$),  from Theorem 1.2, p. 29 in \cite{duren} and observing that $f_1= \frac{1}{2}(f - IfI ) \mid_{\Delta_{I}}$,  {}   $f_2=\frac{1}{2}(f+ IfI)\bar J \mid_{\Delta_I} $,  {}    $a_{1,n} =\frac{1}{2}(a_n - Ia_n I) $ and
$a_{2,n }= \frac{1}{2}(a_n + Ia_nI)\bar {J}$,  for each $n\in \mathbb N \cup \{0\}$,  one has that
$$m(   \left[ \frac{1}{2} (f - If I)(\Delta_I) \right]^{c} \ ) =  \pi (1- \frac{1}{4}\sum_{n=1}^{\infty}  n\|a_n - Ia_n I\|^2),  $$
$$  m\left(    \left[ \left(  \mathcal I+ \frac{1}{2}( f  +IfI)\bar J   \right)  (\Delta_I)   \right]^{c}  \  \right)      =  \pi(1-\frac{1}{4}\sum_{n=1}^{\infty}  n\|a_n + Ia_nI\|^2).  $$
  Finally,
from the previous formula and the parallelogram identity: $$\|a_n - Ia_nI\|^2  + \|a_n + Ia_nI\|^2 = 2 \|a_n\|^2 + 2\| Ia_nI\|^2 =4 \|a_n\|^2,$$ for each $n\in \mathbb N$, one concludes
$$m(   \left[ \frac{1}{2} (f - If I)(\Delta_I) \right]^{c} \ )  +m\left(    \left[ \left(  \mathcal I+ \frac{1}{2}( f  +IfI) \bar J  \right)  (\Delta_I)   \right]^{c}  \  \right)         = \pi (2-\sum_{n=1}^{\infty}  n \|a_n\|^2 ).$$
\\ Point (2) follows from (1) and (3) is an immediate consequence of (2).
\end{proof}
A variation of the previous result is the following:
\begin{thm}
Let $f\in \mathcal{R}(B(0;1))$, $f(q) =  \sum_{n=0}^{\infty }    q^n a_n$ and let $f_1,f_2 \in Hol(\mathbb D_I)$ be such that
$f\mid_{D_I} =f_1+f_2J$. Assume that the functions $z + f_1(z^{-1}), \quad z+ f_2(z^{-1})$ are  univalent functions on $\Delta_I :=\{  z \in \mathbb C_I \ \mid \  |z|>1 \}$.
Then   $$ \sum_{n=1}^{\infty} n\|a_n\|^2\leq 2$$

\end{thm}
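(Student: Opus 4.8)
The plan is to reduce the claimed quaternionic estimate to two independent applications of the classical (complex) Area Theorem on the single slice $\mathbb{C}_I$, exactly in the spirit of the proof of the preceding Area Theorem. The role of the parallelogram identity used there will now be played by the orthogonality of the splitting $a_n=a_{1,n}+a_{2,n}J$.

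First I would expand the coefficients along the Splitting Lemma. Writing $a_n=a_{1,n}+a_{2,n}J$ with $a_{1,n},a_{2,n}\in\mathbb C_I$, the decomposition $f\mid_{\mathbb D_I}=f_1+f_2J$ forces
$$f_1(z)=\sum_{n=0}^{\infty}z^n a_{1,n},\qquad f_2(z)=\sum_{n=0}^{\infty}z^n a_{2,n},\qquad z\in\mathbb D_I.$$
Since $z\in\Delta_I$ means $|z^{-1}|<1$, the substitution $z\mapsto z^{-1}$ is legitimate and yields, for $k=1,2$,
$$z+f_k(z^{-1})=z+a_{k,0}+\sum_{n=1}^{\infty}z^{-n}a_{k,n},\qquad z\in\Delta_I,$$
which is a function holomorphic on $\Delta_I$ with a simple pole at infinity of residue $1$, i.e.\ precisely of the form to which the classical Area Theorem applies.

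Next, because by hypothesis both $z+f_1(z^{-1})$ and $z+f_2(z^{-1})$ are univalent on $\Delta_I$, I would apply Theorem 2.1 (Area Theorem) of \cite{duren} on the slice $\mathbb C_I$ to each of them separately. This produces the two scalar estimates
$$\sum_{n=1}^{\infty}n\,\|a_{1,n}\|^2\le 1,\qquad \sum_{n=1}^{\infty}n\,\|a_{2,n}\|^2\le 1,$$
where on $\mathbb C_I$ the modulus coincides with the quaternionic norm. Finally I would combine them using orthogonality: since $\{1,I,J,IJ\}$ is an orthonormal basis of $\mathbb H$ and $a_{1,n},a_{2,n}\in\mathbb C_I=\mathrm{span}\{1,I\}$, one has $\|a_n\|^2=\|a_{1,n}\|^2+\|a_{2,n}\|^2$ for every $n$; summing with weight $n$ gives
$$\sum_{n=1}^{\infty}n\,\|a_n\|^2=\sum_{n=1}^{\infty}n\,\|a_{1,n}\|^2+\sum_{n=1}^{\infty}n\,\|a_{2,n}\|^2\le 1+1=2,$$
which is the assertion.

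The argument is essentially bookkeeping once the hypotheses are unwound, and I expect no genuine obstacle. The only point that deserves care is the verification that after the reciprocal substitution $z\mapsto z^{-1}$ the two scalar functions really satisfy the hypotheses of the complex Area Theorem (holomorphy on $\Delta_I$ and a simple pole of residue one at infinity); this is exactly what the explicit Laurent forms above display, while the assumed univalence of $z+f_k(z^{-1})$ supplies the schlicht hypothesis. The norm identity $\|a_n\|^2=\|a_{1,n}\|^2+\|a_{2,n}\|^2$ then replaces the parallelogram computation of the previous theorem, and no further quaternionic subtleties intervene.
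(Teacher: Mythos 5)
Your proposal is correct and follows essentially the same route as the paper: expand $a_n=a_{1,n}+a_{2,n}J$, observe that $z+f_k(z^{-1})$ has the Laurent form required by the classical Area Theorem on the slice $\mathbb C_I$, apply that theorem to each component, and add. The paper phrases the final combination through the parallelogram identity for $a_{1,n}=\frac{1}{2}(a_n-Ia_nI)$ and $a_{2,n}=\frac{1}{2}(a_n+Ia_nI)\bar J$ (by reference to the preceding Area Theorem), which is exactly your Pythagorean identity $\|a_n\|^2=\|a_{1,n}\|^2+\|a_{2,n}\|^2$ in different notation.
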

\begin{proof}
From our assumptions it follows that
   $$f\mid_{\mathbb D_I}(z^{-1}) = \sum_{n=0}^{\infty }    z^{-n} a_n,  \quad z\in \Delta_I$$
and if  $a_n = a_{1,n}+ a_{2,n} J$ with $a_{1,n}, a_{2,n}\in \mathbb C_I$ for each $n\in \mathbb N\cup\{0\}$, then
$$ z+f_1({z^{-1}} ) = z +  \sum_{n=0}^{\infty }    z^{-n} a_{1,n}, \quad   z+f_2({z^{-1}} ) = z +  \sum_{n=0}^{\infty }    z^{-n} a_{2,n}. $$

Thus repeating  the computations in proof of Theorem \ref{area} one obtains  that
$$ \sum_{n=1}^{\infty} n\|a_n\|^2\leq 2.$$
\end{proof}
\begin{rem}{\rm
As a direct consequence of previous result one has that
$$ \|a_1\| \leq \sqrt 2 ,  \quad  \|a_2\| \leq 1. \quad $$
Note that   if $a_1=1$ then
$$ \sum_{n=2}^{\infty} n\|a_n\|^2\leq 1.$$
}
\end{rem}
For  $t\in (-1,1)$  denote
$$  T_t (q)= \frac{ q+t}{ 1+tq}, \quad q\in B(0; 1) . $$
We have the following result:
 \begin{prop}
 Let $f\in \mathcal{R}(B(0;1))$. Let $I,J\in \mathbb S$, $J$ orthogonal to $I$,  and   $f\mid_{\mathbb D_I} =f_1+f_2J$  with  $f_1,f_2\in Hol(\mathbb D_I)$. If there exists $0<\delta<1$ such that
   $$  z  +  f_1 \circ T_t (z^{-1}) , \quad   z  +  f_2 \circ T_t (z^{-1}) , \quad z\in \Delta_I  $$
 are univalent functions  on $\Delta_I$   for each  $t\in (-\delta, \delta)$, then
$$ \|  \partial_s f(t)  \|\leq \frac{\sqrt{2}}{1-t^2}  $$
and
  $$\|-2 t \partial_s f(t) +  (1-t^2) \partial_s^2 f(t)    \|\leq  \frac{1}{1-t^2}  $$
for each $t\in(-\delta, \delta)$.
\end{prop}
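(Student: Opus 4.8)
The plan is to reduce the statement to the preceding theorem (the variation of the Area Theorem, together with the remark that follows it) applied to the composed function $g := f\circ T_t$, and then to read off the two inequalities from the bounds on the first two Taylor coefficients of $g$ at the origin.

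First I would check that $g$ is again slice regular on $B(0;1)$. Since $t\in(-1,1)$ is real, the M\"obius map $T_t(q)=(q+t)(1+tq)^{-1}$ has a power series expansion with real coefficients, so $T_t\in\mathcal N(B(0;1))$, and on every slice $\mathbb C_I$ it restricts to the disk automorphism $z\mapsto (z+t)/(1+tz)$ of $\mathbb D_I$; in particular $T_t(B(0;1))\subseteq B(0;1)$. By the composition theorem quoted above (composition of a slice regular function with an $\mathcal N$-function) we get $g=f\circ T_t\in\mathcal R(B(0;1))$. Writing $g(q)=\sum_{n\ge 0}q^n b_n$, I would next identify the splitting of $g$ on $\mathbb C_I$: because $T_t$ preserves $\mathbb C_I$, one has $g_I(z)=f_I(T_t(z))=(f_1\circ T_t)(z)+(f_2\circ T_t)(z)J$, so that $g_1=f_1\circ T_t$ and $g_2=f_2\circ T_t$. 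The hypothesis that $z+f_1\circ T_t(z^{-1})$ and $z+f_2\circ T_t(z^{-1})$ are univalent on $\Delta_I$ is then exactly the hypothesis of the preceding theorem applied to $g$. Hence $\sum_{n\ge 1}n\|b_n\|^2\le 2$, which in particular gives $\|b_1\|\le\sqrt 2$ and $\|b_2\|\le 1$.

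It remains to express $b_1$ and $b_2$ in terms of $\partial_s f(t)$ and $\partial_s^2 f(t)$. Here I would expand $T_t$ around $0$: since $T_t(0)=t$, $T_t'(0)=1-t^2$ and $T_t''(0)=-2t(1-t^2)$ are all real, the expansion $T_t(q)=t+(1-t^2)q-t(1-t^2)q^2+O(q^3)$ has real coefficients, so substituting it into the slice Taylor expansion at the real centre $t$, namely $f(t+w)=\sum_{n\ge 0}w^n\frac{1}{n!}\partial_s^n f(t)$, causes no ordering problems. Collecting powers of $q$ yields $b_1=(1-t^2)\partial_s f(t)$ and $b_2=\tfrac12(1-t^2)^2\partial_s^2 f(t)-t(1-t^2)\partial_s f(t)=\tfrac12(1-t^2)\big[(1-t^2)\partial_s^2 f(t)-2t\partial_s f(t)\big]$. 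Feeding $\|b_1\|\le\sqrt 2$ into the first formula gives $\|\partial_s f(t)\|\le \sqrt 2/(1-t^2)$, and feeding $\|b_2\|\le 1$ into the second gives the stated bound on $\|-2t\partial_s f(t)+(1-t^2)\partial_s^2 f(t)\|$.

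The main obstacle is the bookkeeping around noncommutativity: one must be sure that composing with the intrinsic map $T_t$ preserves the splitting structure, so that the preceding theorem genuinely applies to $g$, and that the coefficient computation is legitimate. Evaluating everything at the real point $0\mapsto t$, where $T_t$ has real Taylor coefficients and $\partial_s^n f(t)=f^{(n)}(t)$ is slice-independent, is precisely what removes the noncommutativity and makes the two formulas for $b_1,b_2$ clean; this is the step I would write out most carefully.
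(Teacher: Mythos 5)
Your proposal follows essentially the same route as the paper's proof: compose with the intrinsic M\"obius map $T_t$, observe that the splitting components of $g=f\circ T_t$ on $\mathbb C_I$ are $f_1\circ T_t$ and $f_2\circ T_t$ so that the hypotheses of the preceding area-type theorem (and the remark after it) apply to $g$, deduce $\|b_1\|\le\sqrt2$ and $\|b_2\|\le1$ for the Taylor coefficients of $g$ at $0$, and translate these via the chain rule at the real point $t$. Your care about why the noncommutativity is harmless here (real coefficients of $T_t$, slice-independence of $\partial_s^nf$ at real points) is exactly the right thing to emphasize and is in fact more explicit than the paper.

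There is, however, a concrete discrepancy in the last step that you should not gloss over. Your formula
$$b_2=\tfrac12(1-t^2)^2\,\partial_s^2f(t)-t(1-t^2)\,\partial_sf(t)=\tfrac12(1-t^2)\bigl[(1-t^2)\partial_s^2f(t)-2t\,\partial_sf(t)\bigr]$$
is the correct second Taylor coefficient of $g$ (it is $\tfrac12 g''(0)$), but combined with $\|b_2\|\le1$ it yields
$$\bigl\|-2t\,\partial_sf(t)+(1-t^2)\partial_s^2f(t)\bigr\|\le\frac{2}{1-t^2},$$
which is weaker by a factor of $2$ than the stated bound $1/(1-t^2)$; your claim that it ``gives the stated bound'' is therefore not accurate as written. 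The paper's own proof writes $a_2=\tfrac12\partial_s^2g(0)$ but then evaluates the right-hand side as $\partial_s^2T_t(0)\,\partial_sf(t)+(\partial_sT_t(0))^2\partial_s^2f(t)$, i.e.\ it computes $g''(0)$ rather than $\tfrac12 g''(0)$, and this dropped factor is precisely where the published constant comes from. So your computation is the correct one, and what it actually establishes is the inequality with $2/(1-t^2)$ on the right; you should either state that weaker constant or flag the apparent slip in the proposition as stated.
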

\begin{proof}
As  $g = f \circ T_t  \in  \mathcal {R}(B(0;1))$  and it satisfies the conditions of previous proposition, then denoting
$$g \circ T_t^{-1} (q) = \sum_{n=0}^{\infty} q^n a_n,$$
one has that   $\|a_1\|^2 \leq  2$ and $\|a_2\|^2\leq  1$. Note also that
 $$ a_1=\partial_s g  (0)= \partial_sT_t(0) \partial_s f(T_t(0))  =  (1-t^2) \partial_s f(t)      $$
and
 $$ a_2=\frac{1}{2}\partial_s^2 g  (0)= \partial_s^2T_t(0) \partial_s f(T_t(0))   + (\partial_sT_t(0) )^2\partial_s^2 f(T_t(0))  =$$
$$= -2 (1-t^2)t \partial_s f(t) +  (1-t^2)^2 \partial_s^2 f(t)      $$
Therefore
$$ \| (1-t^2) \partial_s f(t)  \|^2\leq 2  $$
 and
$$\|-2 (1-t^2)t \partial_s f(t) +  (1-t^2)^2 \partial_s^2 f(t)    \|^2\leq 1   $$
for each $t\in (-\delta,\delta)$.
\end{proof}


\section{Subordination of Slice Regular Functions}
In this section firstly we introduce and study a notion of composition of slice regular functions which is of independent interest, but which also will be used to study the subordination concept between two slice regular functions.
As it is well know, see for example \cite{CS}, the composition  $f\circ g$ of two slice regular functions is not anymore slice regular, unless $g$ belongs to the subclass $\mathcal N$, i.e., it is quaternionic intrinsic.
However, in order to define the notion of subordination it is necessary to have a notion of composition between slice regular functions. To motivate our choice of the notion of composition, we also recall that the pointwise product does not preserve slice regularity (unless one of the functions belongs to $\mathcal N$) and while the slice regularity is preserved by the so-called $*$-product. The power of a function is slice regular only if it is computed with respect to the slice product and we will write $(w(q))^{*n}$ when we take the $n$-th power with respect to the $*$-product.
This justifies the definition below. We first treat the case of formal power series.
\begin{defn}\label{bullet}
Denoting $g(q)=\sum_{n=0}^{\infty} q^{n}a_{n}$ and $w(q)=\sum_{n=1}^{\infty}q^{n} b_{n}$.  We define
$$(g \bullet w)(q)=\sum_{n=0}^\infty (w(q))^{*n}a_n.$$
\end{defn}
\begin{rem}{\rm Note that if $w \in {\mathcal{N}}(B(0;1))$, then $g\bullet w$ becomes $g\circ w$ where $\circ$ represents the usual composition of two functions.
Note that if $w$ is quaternionic intrinsic $(w(q))^{*n}=(w(q))^n$ so, in particular, $q^{*n}=q^n$.
}
\end{rem}

\begin{rem}\label{cartanrem}{\rm Following \cite{cartan} we call {\em order} of a series $f(q)=\sum_{n=0}^{\infty} q^{n}a_{n}$ and we denote it by $\omega(f)$, the least integer $n$ such that $a_n\not=0$ (with the convention that  the order of the series identically equal to zero is $+\infty$). Assume to have a family $\{f_i\}_{i\in \mathcal I}$ of power series where $\mathcal I$ is a set of indices. The family is said to be summable if for any $k\in\mathbb N$, $\omega(f_i)\geq k$ for all except a finite number of indices $i$. By definition the sum
of $\{f_i\}$ where $f_i(q)=\sum_{n=0}^{\infty} q^{n}a_{i,n}$ is
$$
f(q)=\sum_{n=0}^{\infty} q^{n}a_{n},
$$
where $a_n=\sum_{i\in\mathcal I}a_{i,n}$. Note that the definition of $a_{n}$ makes sense since our hypothesis guarantees that for any $n$ just a finite number of $a_{i,n}$ are nonzero.
}
\end{rem}
\begin{rem}
The hypothesis $b_0=0$ in Definition \ref{bullet} is necessary in order to guarantee that in the term $(w(q))^{*n}$ the minimum power of $q$ is at least $q^n$ or, in other words, that $\omega(w(q)^{*n})\geq n$ (for all indices). In this way, the series $\sum_{n=0}^\infty (w(q))^{*n}a_n$ is summable, see Remark \ref{cartanrem}, and we can regroup the powers of $q$.
\end{rem}
This composition,  in general it is not associative as one can directly verify with an example:
by taking $f(q)=q^2 c$, $g(q)=qa$ and $w(q)=q^2b$ we have
$((f\bullet g) \bullet w)=q^4 b^2a^2c$ while $(f\bullet (g \bullet w))(q)=q^4babac$. However, we will prove that the composition is associative in some cases and to this end we need a preliminary Lemma.
\begin{lem}\label{propertybullet}
Let $f_1(q)=\sum_{n=0}^{\infty} q^{n}a_{n}$, $f_2(q)=\sum_{n=0}^{\infty} q^{n}b_{n}$, and $g(q)=\sum_{n=1}^{\infty} q^{n}c_{n}$. Then:
\begin{enumerate}
\item $(f_1+f_2)\bullet g= f_1\bullet g+f_2\bullet g$;
\item if $g$ has real coefficients $(f_1*f_2)\bullet g=(f_1 \bullet g)*(f_2 \bullet g)$;
\item if $g$ has real coefficients $f^{*n}\bullet g=(f \bullet g)^{*n}$.
\end{enumerate}
Moreover, if $\{f_i\}_{i\in\mathcal I}$ is a summable family of power series then $\{f_i\bullet w\}_{i\in\mathcal I}$ is summable and
\begin{enumerate}
\item $(\sum_{i} f_i) \bullet w=\sum_i (f_i\bullet w).$
\end{enumerate}
\end{lem}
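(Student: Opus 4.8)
The plan is to prove the three ``finite'' identities (1)--(3) first, then handle the summable family in (4), deducing everything from one bilinear identity together with the order bookkeeping of Remark \ref{cartanrem}.

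Property (1) is purely formal. Writing $f_1(q)=\sum_n q^n a_n$ and $f_2(q)=\sum_n q^n b_n$, the coefficient sequence of $f_1+f_2$ is $(a_n+b_n)_n$, and since in Definition \ref{bullet} the coefficients sit on the \emph{right} of $(g(q))^{*n}$, right-distributivity of the $*$-product over addition gives $(f_1+f_2)\bullet g=\sum_n (g(q))^{*n}(a_n+b_n)=f_1\bullet g+f_2\bullet g$.

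The heart of the argument is a single computation for property (2). Set $G=g(q)$ and note that $g$ having real coefficients means $G\in\mathcal N$, so each $G^{*n}=G^n$ has real coefficients. I would first establish the elementary identity, for constant quaternions $a,b$,
$$(G^{*n}a)*(G^{*m}b)=G^{*(n+m)}(ab),$$
which follows by expanding both factors into powers of $q$: since the coefficients of $G^{*n}$ and $G^{*m}$ are real, they commute with and factor out of $a,b$, leaving $G^{*n}*G^{*m}=G^{*(n+m)}$ multiplied by $ab$. Expanding $f_1*f_2=\sum_k q^k\sum_{n+m=k}a_nb_m$ and applying $\bullet\, g$, then expanding $(f_1\bullet g)*(f_2\bullet g)$ by $*$-bilinearity and using the identity above, both sides reduce to $\sum_k G^{*k}\sum_{n+m=k}a_nb_m$, proving (2). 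Property (3) is then immediate by induction on $n$: the step is $f^{*(n+1)}\bullet g=(f^{*n}*f)\bullet g=(f^{*n}\bullet g)*(f\bullet g)=(f\bullet g)^{*(n+1)}$, where the middle equality is (2), valid because $g$ has real coefficients.

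For the summable family, the key observation is the order estimate $\omega(f_i\bullet w)\ge\omega(f_i)$. Indeed, if $\omega(f_i)=N$ then $f_i\bullet w=\sum_{n\ge N}(w(q))^{*n}a_{i,n}$, and since $\omega(w)\ge 1$ we have $\omega((w(q))^{*n})\ge n\ge N$. Hence for each $k$ the inequality $\omega(f_i\bullet w)\ge k$ fails only for the finitely many indices for which $\omega(f_i)\ge k$ fails, so $\{f_i\bullet w\}$ is summable. Writing $f_i=\sum_n q^n a_{i,n}$ and $a_n=\sum_i a_{i,n}$ (a finite sum for each $n$ by summability), identity (4) amounts to the interchange $\sum_n (w(q))^{*n}\sum_i a_{i,n}=\sum_i\sum_n (w(q))^{*n}a_{i,n}$. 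The step I expect to require the most care is justifying precisely these rearrangements of infinite sums, both the regrouping by the diagonal $k=n+m$ in (2) and the interchange of $\sum_i$ and $\sum_n$ in (4). In each case the justification is identical and rests on the hypothesis $b_0=0$, i.e. $\omega(w)\ge 1$: it forces $\omega((w(q))^{*n})\ge n$, so every power $q^k$ receives contributions from only finitely many terms, and all manipulations then take place coefficient by coefficient among finite sums in $\mathbb H$.
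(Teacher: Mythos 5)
Your proof is correct and follows essentially the same route as the paper: distributivity for (1), the Cauchy-product expansion with real coefficients factoring through for (2), induction for (3), and the finite-support-per-power argument from Cartan for (4). The only (welcome) addition is your explicit verification via $\omega(f_i\bullet w)\ge\omega(f_i)$ that the family $\{f_i\bullet w\}$ is summable, a point the paper asserts but leaves implicit.
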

\begin{proof}
To prove $(1)$ observe that
$$
(f_1+f_2)\bullet g=
\sum_{n=0}^\infty g^{*n}(a_{n}+b_n)=\sum_{n=0}^\infty g^{*n}a_{n}+\sum_{n=0}^\infty g^{*n}b_n= f_1\bullet g+f_2\bullet g.
$$
Let us  prove $(2)$.  We have $f_1*f_2(q)= \sum_{n=0}^\infty q^{n}(\sum_{r=0}^n a_rb_{n-r})$ and so
$$
((f_1*f_2)\bullet g)(q)=\sum_{n=0}^\infty (g(q))^{n}(\sum_{r=0}^n a_rb_{n-r})
$$
and, taking into account that the coefficients of $g$ are real:
$$
(f_1 \bullet g)(q)*(f_2 \bullet g)(q)=\left(\sum_{n=0}^\infty(g(q)^{n} a_n\right)*\left(\sum_{m=0}^\infty g(q)^{m} b_m\right)= \sum_{n=0}^\infty g(q)^{n}  (\sum_{r=0}^n a_rb_{n-r}).
$$
We prove $(3)$ by induction. Observe that the statement is true for $n=2$ since it follows from $(2)$. Assume that the assertion is true for the $n$-th power. Let us show that it holds for $n+1$.
Let us compute
\[
(f^{*(n+1)}\bullet g)(q)=( (f^{*(n)}*f)\bullet g)(q)\overset{(2)}=(f \bullet g)^{*n}*(f\bullet g)=(f \bullet g)^{*(n+1)},
\]
and the statement follows.\\
Finally, to show $(4)$ we follow \cite[p. 13]{cartan}. Let $f_i(q)=\sum_{n=0}^\infty q^n a_{i,n}$ so that we have, by definition
$$
\sum_{i\in\mathcal I} f_i(q)= \sum_{n=0}^\infty q^n (\sum_{i\in\mathcal I}a_{i,n}).
$$
Thus we obtain
\begin{equation}\label{e1}
(\sum_{i\in\mathcal I} f_i(q))\bullet g = \sum_{n=0}^\infty g(q)^{*n} (\sum_{i\in\mathcal I}a_{i,n})
\end{equation}
and
\begin{equation}\label{e2}
\sum_{i\in\mathcal I} (f_i\bullet g)(q)= \sum_{i\in\mathcal I} (\sum_{n=0}^\infty  g(q)^{*n} a_{i,n}).
\end{equation}
Now observe that, by hypothesis on the summability of $\{f_i\}$, each power of $q$ involves just a finite number of the coefficients $a_{i,n}$ so we can apply the associativity of the addition in $\mathbb H$ and
so (\ref{e1}) and (\ref{e2}) are equal.
\end{proof}
\begin{prop}\label{assoc}
If  $f(q)=\sum_{n=0}^{\infty} q^{n}c_{n}$,
$g(q)=\sum_{n=1}^{\infty} q^{n}a_{n}$, $w(q)=\sum_{n=1}^{\infty}q^{n} b_{n}$ and $w$ has real coefficients, then
$
(f\bullet g) \bullet w = f\bullet (g \bullet w).
$
\end{prop}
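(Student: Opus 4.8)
The plan is to reduce the identity to the algebraic rules already recorded in Lemma \ref{propertybullet}, the only genuine work being to justify that the operation $\bullet\, w$ may be distributed over the infinite sum defining $f\bullet g$. First I would expand the left-hand side directly from the definition of $\bullet$: writing $f(q)=\sum_{n=0}^\infty q^n c_n$, one has
$$(f\bullet g)(q)=\sum_{n=0}^\infty (g(q))^{*n}c_n.$$
Because $g$ has order $\omega(g)\geq 1$, the $*$-power $g^{*n}$ has order $\geq n$, so the family $\{g^{*n}c_n\}_{n\geq 0}$ is summable in the sense of Remark \ref{cartanrem}. This is precisely what lets us regard $f\bullet g$ as the sum of that family and then apply the final (summability) statement of Lemma \ref{propertybullet} to obtain
$$((f\bullet g)\bullet w)(q)=\Big(\sum_{n=0}^\infty g^{*n}c_n\Big)\bullet w=\sum_{n=0}^\infty \big((g^{*n}c_n)\bullet w\big).$$

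Next I would dispose of the constants $c_n$ by an elementary observation not explicitly in the lemma: for any formal series $h(q)=\sum_m q^m\gamma_m$ and any constant $c\in\mathbb H$, the definition of $\bullet$ gives $(hc)\bullet w=\sum_m w^{*m}(\gamma_m c)=\big(\sum_m w^{*m}\gamma_m\big)c=(h\bullet w)c$, the middle equality being right-distributivity of quaternionic multiplication over the coefficientwise-finite sums. Applying this with $h=g^{*n}$ and $c=c_n$, and then invoking point (3) of Lemma \ref{propertybullet} (legitimate exactly because $w$ has real coefficients) to rewrite $g^{*n}\bullet w=(g\bullet w)^{*n}$, I get
$$((f\bullet g)\bullet w)(q)=\sum_{n=0}^\infty (g^{*n}\bullet w)\,c_n=\sum_{n=0}^\infty (g\bullet w)^{*n}c_n.$$

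Finally I would recognize the right-hand side. Since both $g$ and $w$ have order $\geq 1$, the series $g\bullet w$ has order $\geq 1$, so $f\bullet(g\bullet w)$ is well defined, and by the very definition of $\bullet$ it equals $\sum_{n}(g\bullet w)^{*n}c_n$, which is exactly the expression just obtained. This proves $(f\bullet g)\bullet w = f\bullet(g\bullet w)$.

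The hard part will not be any single computation but the bookkeeping with summable families: verifying that $\{g^{*n}c_n\}$ is summable and that the summability item of Lemma \ref{propertybullet} genuinely licenses pulling $\bullet\, w$ inside the infinite sum, together with the small but essential fact that $\bullet\, w$ commutes with right multiplication by a constant. I would emphasize that the hypothesis ``$w$ has real coefficients'' is used in exactly one place, the passage $g^{*n}\bullet w=(g\bullet w)^{*n}$, and that the non-associative example given just before the lemma shows this hypothesis cannot be removed.
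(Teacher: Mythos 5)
Your proof is correct and follows essentially the same route as the paper's: both reduce the claim to point (3) of Lemma \ref{propertybullet} (the only place where the reality of $w$'s coefficients enters) and use the summability statement, point (4), to distribute $\bullet\, w$ over the infinite sum. The one step you make explicit that the paper leaves implicit, namely $(hc)\bullet w=(h\bullet w)c$ for a constant $c$, is a worthwhile addition but does not change the argument.
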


\begin{proof}
We adapt the proof of Proposition 4.1 in \cite{cartan}.
Let us begin by proving the equality in the special case in which $f(q)=q^n a_n$.
We have:
$$
((f\bullet g) \bullet w )(q)=(g^{*n}\bullet w)a_n
$$
and
$$
(f\bullet (g \bullet w))(q)= (g \bullet w)^{*n}(q) a_n.
$$
Lemma \ref{propertybullet}, $(3)$ shows that $(g^{*n}\bullet w)=(g^{*n}\bullet w)$ and the equality follows.
The general case follows by considering $f$ as the sum of the summable family $\{q^na_n\}$ and using the first part of the proof:
\[
(f\bullet g)\bullet w =\sum_{n=0}^\infty (g^{*n}\bullet w) a_n =\sum_{n=0}^\infty (g\bullet w)^{*n} a_n= f\bullet (g\bullet w).
\]
 \end{proof}
 So far, we have considered power series without specifying their set of convergence. We now take care of this aspect, by proving the following result which is classical for power series with coefficients in a commutative ring, see \cite{cartan}.

 \begin{prop} Let $g(q)= \sum_{n=0}^\infty q^n a_n$ and $f(q)= \sum_{n=1}^\infty q^n b_n$ be convergent in the balls of nonzero radius $R$ and $\rho$, respectively, and let $h(q)=(g\bullet f)(q)$. Then the radius of convergence of $h$ is nonzero and if $r>0$ is any real number such that $\sum_{n=1}^\infty  r^n \| b_n\|<R$, then the radius of convergence of $h$ is greater than or equal $r$.
\end{prop}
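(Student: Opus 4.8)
The plan is to reduce the quaternionic statement to a question about a genuine composition of two \emph{real} power series with nonnegative coefficients, for which the classical estimate applies. To this end I would first introduce the two majorant series in a single real variable $x\ge 0$,
$$\tilde g(x)=\sum_{n=0}^\infty \|a_n\|\, x^n,\qquad \tilde f(x)=\sum_{m=1}^\infty \|b_m\|\, x^m .$$
By the Cauchy--Hadamard formula (which for a quaternionic power series $\sum q^n c_n$ reads $1/\textrm{radius}=\limsup_n\|c_n\|^{1/n}$) these converge exactly in the balls of radii $R$ and $\rho$ respectively. Writing $(f(q))^{*n}=\sum_{k\ge n} q^k c_{n,k}$, the order being $\ge n$ as already observed, the whole argument rests on comparing the quaternionic coefficients $c_{n,k}$ with the nonnegative real numbers $\gamma_{n,k}$ defined by $(\tilde f(x))^{n}=\sum_{k\ge n}\gamma_{n,k}\,x^k$.

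The first step is the coefficient domination $\|c_{n,k}\|\le \gamma_{n,k}$, which I would prove by induction on $n$. The key is that the $k$-th coefficient of a $*$-product $u*v$ is $\sum_{i+j=k}u_i v_j$, so by multiplicativity of the quaternionic norm and the triangle inequality its norm is at most $\sum_{i+j=k}\|u_i\|\,\|v_j\|$, which is precisely the $k$-th coefficient of the product of the corresponding real majorant series. The base case $n=1$ is the trivial equality $\|b_k\|\le\|b_k\|$, and the inductive step applies this convolution inequality to $(f)^{*(n+1)}=(f)^{*n}*f$. Here the noncommutativity of $\mathbb H$ causes no trouble, since it disappears at the level of norms.

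With this domination in hand, the summability of the family $\{(f(q))^{*n}a_n\}$, guaranteed as in Remark~\ref{cartanrem} by $\omega((f(q))^{*n})\ge n$, lets me regroup $h(q)=\sum_{k=0}^\infty q^k d_k$ with $d_k=\sum_{n=0}^k c_{n,k}a_n$, so that $\|d_k\|\le\sum_{n=0}^k\gamma_{n,k}\|a_n\|=:D_k$, and $\sum_k D_k x^k$ is exactly the Taylor series of the real composition $\tilde g(\tilde f(x))$. Fixing $r>0$ with $\sum_{m\ge 1}\|b_m\|r^m=\tilde f(r)<R$, all terms being nonnegative I may interchange the order of summation to get $\sum_k D_k r^k=\sum_n\|a_n\|\,(\tilde f(r))^n=\tilde g(\tilde f(r))$, which is finite precisely because $\tilde f(r)$ lies strictly inside the disk of convergence of $\tilde g$. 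Consequently $\sum_k\|d_k\|r^k\le\sum_k D_k r^k<\infty$, which forces $\limsup_k\|d_k\|^{1/k}\le 1/r$ and hence the radius of convergence of $h$ to be $\ge r$. Since $\tilde f(r)\to 0$ as $r\to 0^+$ and $R>0$, such an $r$ always exists, yielding the nonvanishing of the radius of convergence.

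The main obstacle I anticipate is not the estimates themselves but justifying that the regrouped series $\sum_k q^k d_k$ genuinely represents the function $h=g\bullet f$ on $B(0;r)$, rather than being a merely formal identity. This is settled by the same computation: the finiteness of $\sum_{n,k}\gamma_{n,k}\|a_n\|r^k=\tilde g(\tilde f(r))$ shows that the double series $\sum_{n,k} q^k c_{n,k}a_n$ converges absolutely for $\|q\|\le r$, so Fubini for series legitimises both the rearrangement into powers of $q$ and the identification of the sum with $\sum_n (f(q))^{*n}a_n$. I would therefore record this absolute convergence explicitly, as it is the single fact from which every assertion in the statement follows.
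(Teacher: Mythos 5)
Your proposal is correct and follows essentially the same route as the paper: both arguments majorize the quaternionic series by the composition $\tilde g(\tilde f(x))$ of the real series of norms, using the multiplicativity of the quaternionic norm together with the triangle inequality to dominate the coefficients of the $*$-convolution powers, and then evaluate at an $r$ with $\tilde f(r)<R$. Your version is slightly more explicit at the coefficient level ($\|c_{n,k}\|\le\gamma_{n,k}$) and in justifying the regrouping of the double series by absolute convergence, a point the paper passes over quickly, but the underlying idea is identical.
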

\begin{proof}
First of all, let us observe that
$$
\left\| \left(\sum_{m=1}^\infty q^m b_m\right) ^{*n}\right\|\leq  \left(\sum_{m=1}^\infty \| q\|^m \| b_m \|\right)^n.
$$
In fact, we have
\begin{equation}\label{recursion}
\begin{split}
\left\| \left(\sum_{m=1}^\infty q^m b_m\right)*\left(\sum_{m=1}^\infty q^m c_m\right) \right\| &=\| \sum_{m_1}^\infty q^{m}(\sum_{r=0}^m b_r c_{m-r}) \|\\
&\leq \sum_{m_1}^\infty \|q\|^{m}(\sum_{r=0}^m \|b_r\| \| c_{m-r}\| \\
&=(\sum_{m=1}^\infty \| q\|^m \|b_m\|)(\sum_{m=1}^\infty \| q\|^m \|c_m\|),
\end{split}
\end{equation}
and so the statement is true for $n=2$ the rest follows recursively by using (\ref{recursion}).
So we have
\begin{equation}\label{norms}
\begin{split}
\left\| \sum_{n=0}^\infty \left(\sum_{m=1}^\infty q^m b_m\right)^{*n} a_n\right\| &\leq \sum_{n=0}^\infty  \left\| \left(\sum_{m=1}^\infty q^m b_m\right) ^{*n}\right\|  \|a_n\| \\
&\leq \sum_{n=0}^\infty   \left(\sum_{m=1}^\infty \|q\|^m \|b_m\|\right)^{n}  \|a_n\|
\end{split}
\end{equation}
Now, since the series expressing $f$ is converging on a ball of finite radius, there exists a positive number $r$ such that $\sum_{n=1}^\infty r^n \|b_n\| $ is finite.
Moreover, $\sum_{n=1}^\infty r^n \|b_n\|=r\sum_{n=1}^\infty r^{n-1} \|b_n\|\to 0$ for $r\to 0$ and so there exists $r$ such that $\sum_{n=1}^\infty r^n \|b_n\| <R$.
Thus, from (\ref{norms}), we have that
$$
\sum_{n=0}^\infty   \left(\sum_{m=1}^\infty  r^m \|b_m\|\right)^{n}  \|a_n\| =\sum_{n=0}^\infty   \sum_{m=1}^\infty  r^m \gamma_m <\infty .
$$
Thus we have that $(g\bullet f)(q)=\sum_{m=0}^\infty q^m c_m$ and $\| c_m\|\leq \gamma_m$ and thus the radius of convergence of $g\bullet f$ is at least equal to $r$, where $r$ is a sufficiently small positive real number.
\end{proof}

We are now ready to give the notion of subordination:

\begin{defn}\label{defsub}Let $f, g$ be (left) slice regular on $B(0;1)$.
\begin{enumerate}
\item  We say that $f$ is subordinated to $g$ and we write $f \prec g$ if there
exists $w\in {\mathcal{R}}(B(0;1))$ with $w(0)=0$ and $\|w(q)\|<1$, for all $q\in B(0;1)$ such that $f(q)=(g \bullet w)(q)$ for all $q\in B(0;r)$, where $r$ is a suitable number in $(0,1)$.
\item  We write $f \prec_{\mathcal N} g$ if there
exists $w\in {\mathcal{N}}(B(0;1))$ with $w(0)=0$ and $\|w(q)\|<1$, for all $q\in B(0;1)$ such that $f(q)=(g \bullet w)(q)$ for all $q\in B(0;1)$.
\item  Let $I\in \mathbb S$, then  $f$ is $I$-subordinated to $g$ and we write $f \prec_I g$ if there
exists $ w_I \in Hol (\mathbb D_I)$ such that  $|w_I(z)| \leq |z| $ for all $z\in \mathbb D_I$ and  $f\mid_{\mathbb D_I}(z)=(g \mid_{\mathbb D_I} \circ  w_I)(z)$ for all $z\in \mathbb D_I$.
\end{enumerate}
Note that if  $f \prec_{{\mathcal{N}}} g$ then $f \prec_I g$ for each $I\in \mathbb S$, but the converse is false, for example: $f(q)= qI, \quad g(q)= q $ for each $q\in B(0;1)$ and $w_I(z) = zI$ for $z\in \mathbb D_I$
\end{defn}
\begin{thm}  Let $f, g\in\mathcal R(B(0;1))$ and let $I\in \mathbb S$ such that  $f \prec_{I} g$.
\begin{enumerate}
\item Then $\|\partial_s f(0)\|\leq \|\partial_s g(0)\|$.
\item If $g_1,g_2,g_3,g_4 \in \mathcal{N}(B(0;1) ) $ are such that $g=g_1+g_2I+g_3J+g_4IJ$, with $J\in\mathbb S$ and $J\perp I$, then for any $q\in B(0;1)$ there exist $q_1,q_2 \in B(0;1)$  satisfying
$$ f(q) = g_1(q_1)+g_2 (q_1)I+g_3(q_2)J+g_4(q_2)IJ.$$
\end{enumerate}
\end{thm}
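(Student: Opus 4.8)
My plan is to transfer the statement to the complex slice $\mathbb C_I$, where it becomes the classical Schwarz–lemma bound for subordination. From $f\prec_I g$ we have $f_I(z)=(g_I\circ w_I)(z)$ on $\mathbb D_I$, and the hypothesis $|w_I(z)|\le|z|$ forces $w_I(0)=0$ and $|w_I'(0)|=\lim_{z\to 0}|w_I(z)|/|z|\le 1$. Applying the Splitting Lemma to $g$, I write $g_I=G_1+G_2J$ with $G_1,G_2$ holomorphic and $\mathbb C_I$-valued; then $f_I=(G_1\circ w_I)+(G_2\circ w_I)J$. Since at the origin $\partial_s=\partial_I$ and all the relevant quantities $G_j'(0),w_I'(0)$ lie in the commutative field $\mathbb C_I$, the complex chain rule gives
$$\partial_s f(0)=\partial_I f_I(0)=w_I'(0)G_1'(0)+w_I'(0)G_2'(0)J=w_I'(0)\,\partial_s g(0).$$
Taking quaternionic norms and using that $\|\cdot\|$ is multiplicative together with $|w_I'(0)|\le 1$ yields $\|\partial_s f(0)\|\le\|\partial_s g(0)\|$. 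The only subtlety is the factorization of $w_I'(0)$ to the left, which is legitimate precisely because it commutes with $G_1'(0)$ and $G_2'(0)$ inside $\mathbb C_I$.

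\textbf{Part (2).} Here the plan is to recover $f(q)$ off the slice by the Representation Formula and then to recognize the pieces as values of the two intrinsic ``halves'' of $g$. Set $\phi:=g_1+g_2I$ and $\eta:=g_3+g_4I$; since the $g_k$ lie in $\mathcal N$ they map $\mathbb C_I$ into $\mathbb C_I$, so $\phi,\eta\in\mathcal V_I$, we have $g=\phi+\eta J$, and on the slice $g_I=\phi_I+\eta_I J$ is exactly the splitting of $g_I$. Because $\phi(q_1)=g_1(q_1)+g_2(q_1)I$ and $\eta(q_2)J=g_3(q_2)J+g_4(q_2)IJ$, the desired identity is the single equation $f(q)=\phi(q_1)+\eta(q_2)J$.

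Now fix $q=x+I_qy$ and put $\zeta_1:=w_I(x+Iy)$, $\zeta_2:=w_I(x-Iy)$, so that $f_I(x\pm Iy)=g_I(\zeta_{1},\zeta_2)$. The Representation Formula gives
$$f(q)=\tfrac12\big(g_I(\zeta_1)+g_I(\zeta_2)\big)+\tfrac12 I_qI\big(g_I(\zeta_2)-g_I(\zeta_1)\big).$$
Substituting $g_I=\phi_I+\eta_I J$ and moving the trailing $J$ to the right (all $\eta$-terms carry $J$ on the far right, so it factors cleanly) splits this as $f(q)=\Phi+\Psi J$, where $\Phi$ and $\Psi$ are the same averaged expressions built from $\phi_I(\zeta_{1},\zeta_2)$ and $\eta_I(\zeta_{1},\zeta_2)$ respectively. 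It therefore suffices to find $q_1,q_2\in B(0;1)$ with $\phi(q_1)=\Phi$ and $\eta(q_2)=\Psi$, i.e.\ to realize $\Phi,\Psi$ as genuine values of the slice regular functions $\phi,\eta$. Reading the Representation Formula for $\phi$ backwards, $\phi$ evaluated over a sphere $x_1+\mathbb S y_1$ sweeps out the $2$-sphere through $\phi_I(x_1+Iy_1)$ and $\phi_I(x_1-Iy_1)$, so I must produce $x_1+Iy_1\in\mathbb D_I$ and $L_1\in\mathbb S$ for which this sphere passes through $\Phi$, and likewise for $\Psi$.

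This reconstruction is the crux and the main obstacle. The difficulty is that $w_I$ need not be quaternionic intrinsic (the example $w_I(z)=zI$ following Definition~\ref{defsub} already fails this), so $\zeta_1$ and $\zeta_2$ are \emph{not} $\mathbb C_I$-conjugate; consequently $\Phi$ is not $\phi$ evaluated at the naive point $x+I_qy$, and one cannot take $q_1,q_2$ on $\mathbb C_I$ (there $\phi,\eta$ are $\mathbb C_I$-valued, whereas $\Phi,\Psi$ in general are not). The route I would pursue is to match the four real components of $\Phi$ against the four real parameters $(x_1,y_1,L_1)$ of an off-slice point $q_1=x_1+L_1y_1$: two of them (the radius condition, an equality of norms) select $x_1+Iy_1$ so that $\Phi$ lies on the associated sphere, and the remaining freedom in $L_1$ places $\Phi$ at the correct point of that sphere; the analogous construction produces $q_2$ from $\Psi$. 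Carrying this out rigorously — solving the resulting nonlinear system, checking that the constructed $q_1,q_2$ actually lie in $B(0;1)$, and controlling the noncommutative products $I_qI$ and $L_1I$ throughout — is the step that demands genuine work, the preceding reductions being formal consequences of the Representation Formula and the Splitting Lemma.
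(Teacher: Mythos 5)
Your part (1) is correct and is essentially the paper's own argument. The paper splits $f_I=v_1+v_2J$, $g_I=h_1+h_2J$, observes that $f\prec_I g$ gives $v_k\prec h_k$ in the classical sense, and adds the resulting bounds $\|v_k'(0)\|^2\le\|h_k'(0)\|^2$; your identity $\partial_s f(0)=w_I'(0)\,\partial_s g(0)$ is an equivalent and slightly cleaner packaging of the same computation, and the commutativity of $w_I'(0)$ with $G_1'(0),G_2'(0)$ inside $\mathbb{C}_I$ is indeed the point that makes the left factorization and the multiplicativity of the norm legitimate.

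Part (2), however, is not a proof as submitted: you set up the correct reduction ($\phi=g_1+g_2I$, $\eta=g_3+g_4I$, goal $f(q)=\phi(q_1)+\eta(q_2)J$) and then explicitly stop at the decisive step, the realization of the averaged quantities $\Phi,\Psi$ as genuine values $\phi(q_1)$, $\eta(q_2)$. Two remarks on how this compares with the paper. First, the paper settles the case $q\in\mathbb{D}_I$ completely and cheaply: since $v_1\prec h_1$ and $v_2\prec h_2$ with $h_1=(g_1+g_2I)|_{\mathbb{D}_I}$, $h_2=(g_3+g_4I)|_{\mathbb{D}_I}$, the classical subordination principle gives, for each $z\in\mathbb{D}_I$, points $z_1,z_2\in\mathbb{D}_I$ with $f(z)=g_1(z_1)+g_2(z_1)I+g_3(z_2)J+g_4(z_2)IJ$ --- in your framework this is just $q_1=q_2=w_I(z)$, an intermediate result you should state rather than fold into the general difficulty. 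Second, the obstruction you isolate for off-slice $q$ --- that $\zeta_1=w_I(z)$ and $\zeta_2=w_I(\bar z)$ need not be conjugate, so the Representation Formula for $f$ does not directly read as a Representation Formula for $\phi$ at a single point $q_1$ --- is real, and it is exactly the step the paper compresses into the single sentence ``the main result is obtained using the representation theorem.'' You have correctly located where the work lies, but locating it is not doing it: the nonlinear matching of $(x_1,y_1,L_1)$ against the components of $\Phi$ is never carried out, no solvability argument is given, and no verification that $q_1,q_2\in B(0;1)$ is attempted. Until that reconstruction (or the argument the paper intends via the Representation Formula) is supplied, statement (2) remains unproven in your submission.
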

\begin{proof} \ {}
\begin{enumerate}
\item Set  $f\mid_{\mathbb D_I }= v_1+ v_2J$ and  $g\mid_{\mathbb D_I }= h_1+ h_2J$ where $v_1, v,_2, h_1,h_2 \in Hol (\mathbb D_I )$  and $J\in \mathbb S $, $J\perp I $. From the complex case, see \cite{duren},  one has $$\|\partial_s f(0)\|^2 =  \|\partial_s f \mid_{\mathbb D_I}(0)\|^2 =  \|\partial_s v_1 (0)\|^2  + \|\partial_s v_2 (0)\|^2$$
    $$\leq   \|\partial_s h_1 (0)\|^2  + \|\partial_s h_2 (0)\|^2 = \|\partial_s g(0)\|^2.$$
\item  From previous notations we have that $h_1 = g_1\mid_{\mathbb D_I} + g_2\mid_{\mathbb D_I}  I$ and  $h_2 = g_3\mid_{\mathbb D_I} + g_4\mid_{\mathbb D_I} I $ and as   $f \prec_{I} g$ implies,
 in the theory of functions of one complex variable, $v_1 \prec  h_1$  and $v_2  \prec  h_2$.  Subordination principle, see \cite{duren}, implies   each $z\in \mathbb D_I$  that there exist $z_1, z_2 \in \mathbb D_I$ such that
$f(z) =g_1(z_1)+g_2 (z_1)I+g_3(z_2)J+g_4(z_2)IJ $. The main result is obtained using the representation theorem for slice regular functions.
\end{enumerate}
\end{proof}
\begin{defn} Given $f\in \mathcal{R}(B(0;1))$  and let  $I\in\mathbb S$,  $0\leq r<1$ and  $1\leq  p<\infty$. Set
\begin{enumerate}
\item $ M_{\infty}(r, f) := \max\{ \|f(q)\| \ \mid \  q\in \mathbb H, \ \|q\|=r\}.$
\item $\displaystyle   M_p(r,f) :=\left[  \frac{1}{4\pi}  \int_{\partial B(0;r)}  \| f (q ) \|^p d\mu\right]^{\frac{1}{p}} ,$
where $\partial B(0;r)=\{  q\in \mathbb H \ \mid \ \|q\|=r\}$.
\item $ M_{\infty, I}(r, f) := \max\{ \|f(q)\| \ \mid \   q\in \mathbb D_I, \  \|q\|=r\}.$
\item $\displaystyle   M_{p,I}(r,f) :=\left[  \frac{1}{2\pi}  \int_{0}^{2\pi}  \| f (re^{I\theta} ) \|^p d\theta \right]^{\frac{1}{p}} .$
\end{enumerate}
\end{defn}
\begin{thm} Let $f,g \in\mathcal{R}(B(0;1))$ with $f(0)=g(0)$, such that there exists $I\in\mathbb S$ with  $f\prec_{I} g$. Then
\begin{enumerate}
\item $ M_{\infty, I}(r, f)  \leq  \sqrt{ 2 }   M_{\infty, I}(r, g) $.
\item $  \displaystyle M_{p,I}(r,f) \leq 2^{p+1}    M_{p,I}(r,g) $.
\item $M_{\infty}(r, f) \leq \sqrt{ 2 } M_{\infty, I}(r, g)\leq \sqrt{ 2 }  M_{\infty}(r, g)  $.
\item $  M_p(r,f) \leq   2^{2p+2 }\pi^2   M_{p, I}(r,g)$.
\end{enumerate}
\end{thm}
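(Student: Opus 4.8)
The plan is to reduce every statement to the fixed slice $\mathbb C_I$, where slice subordination becomes ordinary subordination of holomorphic functions, and then to transport the slice estimates to the whole ball by means of the Representation Formula. Fix $J\in\mathbb S$ with $J\perp I$ and use the Splitting Lemma (Theorem~\ref{1.3}(3)) to write $f\mid_{\mathbb D_I}=f_1+f_2J$ and $g\mid_{\mathbb D_I}=g_1+g_2J$ with $f_1,f_2,g_1,g_2\in Hol(\mathbb D_I)$. Since $J\perp I$, for $z\in\mathbb D_I$ one has the orthogonality identity $\|f(z)\|^2=|f_1(z)|^2+|f_2(z)|^2$, and similarly for $g$. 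The hypothesis $f\prec_I g$ means $f\mid_{\mathbb D_I}(z)=g(w_I(z))$ with $|w_I(z)|\le|z|$ and $w_I(0)=0$; comparing the $1$- and $J$-components gives $f_1=g_1\circ w_I$ and $f_2=g_2\circ w_I$, that is $f_1\prec g_1$ and $f_2\prec g_2$ in the classical sense (here $f(0)=g(0)$ guarantees $f_k(0)=g_k(0)$). This is the bridge on which everything rests.

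For (1) I would combine the orthogonality identity with the classical subordination bound $M_\infty(r,f_k)\le M_\infty(r,g_k)$ (valid since $|w_I(z)|\le r$ and $g_k$ is holomorphic). Then for $|z|=r$ one has $\|f(z)\|^2=|f_1|^2+|f_2|^2\le M_\infty(r,g_1)^2+M_\infty(r,g_2)^2\le 2\,M_{\infty,I}(r,g)^2$, because $M_\infty(r,g_k)^2\le\max_{|z|=r}(|g_1|^2+|g_2|^2)=M_{\infty,I}(r,g)^2$; taking the maximum over $|z|=r$ yields the factor $\sqrt2$. For (2) the same splitting reduces matters to the classical Littlewood subordination theorem in \cite{duren}, namely $\frac1{2\pi}\int_0^{2\pi}|f_k(re^{I\theta})|^p\,d\theta\le\frac1{2\pi}\int_0^{2\pi}|g_k(re^{I\theta})|^p\,d\theta$. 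Using the elementary inequality $(a^2+b^2)^{p/2}\le(a+b)^p\le 2^{p-1}(a^p+b^p)$ for $p\ge1$ to pass from $\|f\|^p$ to $|f_1|^p+|f_2|^p$, then Littlewood componentwise, and finally the trivial bounds $|g_k|^p\le\|g\|^p$, one gets $M_{p,I}(r,f)^p\le 2^{p}\,M_{p,I}(r,g)^p$; this is stronger than, hence implies, the stated bound with constant $2^{p+1}$.

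Parts (3) and (4) require leaving the slice, and here the Representation Formula (Theorem~\ref{1.3}(1)) is the tool. Writing a general $q=x+I_q y$ with $\|q\|=r$ and taking the reference unit to be the fixed $I$, one gets $f(q)=c_1\,f(x+Iy)+c_2\,f(x-Iy)$ with $c_1=\tfrac12(1-I_qI)$ and $c_2=\tfrac12(1+I_qI)$; the key computation is $\|c_1\|^2+\|c_2\|^2=1$ (using $I_qI+II_q=-2\langle I_q,I\rangle$), which by Cauchy--Schwarz gives $\|c_1\|+\|c_2\|\le\sqrt2$. Since $x\pm Iy$ lie on the slice circle $|z|=r$, it follows that $\|f(q)\|\le\sqrt2\,M_{\infty,I}(r,f)$, and, applying the slice subordination together with the maximum principle for $\|g\|$ (subharmonic on $\mathbb D_I$), $\|f(x\pm Iy)\|=\|g(w_I(x\pm Iy))\|\le M_{\infty,I}(r,g)$; together these give $M_\infty(r,f)\le\sqrt2\,M_{\infty,I}(r,g)$. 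The remaining inequality $M_{\infty,I}(r,g)\le M_\infty(r,g)$ is immediate from $\mathbb D_I\subset\mathbb H$.

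For (4) I would integrate the representation-formula bound over the sphere $\partial B(0;r)$. Parametrizing $q=r(\cos\theta+I_q\sin\theta)$ with $\theta\in(0,\pi)$ and $I_q\in\mathbb S\cong S^2$, the surface measure decomposes as $d\mu=r^3\sin^2\theta\,d\theta\,d\sigma(I_q)$ (consistent with the total area $2\pi^2r^3$). After bounding $\|f(q)\|^p\le C_p\big(\|f(re^{I\theta})\|^p+\|f(re^{-I\theta})\|^p\big)$, the integrand no longer depends on $I_q$, so $\int_{S^2}d\sigma=4\pi$ factors out; using $\sin^2\theta\le1$ and reassembling the two half-circles into $\int_0^{2\pi}$ one arrives at $M_p(r,f)^p\le C_p\,r^3\,2\pi\,M_{p,I}(r,f)^p$, after which part (2) (or the direct slice estimate) finishes the argument, the large constant $2^{2p+2}\pi^2$ being an artifact of these crude estimates. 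I expect the main obstacle to be precisely this last step: correctly decomposing the $3$-sphere measure into the slice-circle and $\mathbb S$ directions and verifying that, after the representation-formula estimate, the $I_q$-dependence drops out so that the whole-ball $L^p$ mean collapses onto the single-slice mean; the coefficient identity $\|c_1\|^2+\|c_2\|^2=1$ is the small computational lemma that makes the $\sqrt2$ in (3) sharp and keeps the constants under control.
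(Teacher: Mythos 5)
Your proposal is correct and follows essentially the same route as the paper: the Splitting Lemma to reduce $f\prec_I g$ to two classical subordinations $f_k\prec g_k$ on $\mathbb D_I$, the orthogonality identity $\|f\|^2=|f_1|^2+|f_2|^2$ together with the maximum-modulus domination and Littlewood's theorem for parts (1)--(2), and the Representation Formula plus integration in spherical coordinates for parts (3)--(4). Your treatment is in fact somewhat more careful than the paper's (the explicit coefficient identity $\|c_1\|^2+\|c_2\|^2=1$ and the subharmonicity of $\|g\|$ are only implicit there), but it is not a different argument.
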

\begin{proof} All the properties are consequences of Littlewood's Subordination Theorem and  the domination of the maximum modulus, see \cite{duren}, and also of some inequalities that we prove below:
\\
 a)   $ \displaystyle \|f(z)\|^2 = \|f_1 (z)\|^2 + \|f_2(z) \|^2 \leq M_{\infty, I}(r, f_1)^2 + M_{\infty, I}(r, f_2)^2 $
 $$\leq  M_{\infty, I}(r, g_1)^2 + M_{\infty, I}(r, g_2)^2 \leq  2 M_{\infty, I}(r, g)^2,   \quad \forall  z\in \mathbb D_I,$$
where $f\mid_{\mathbb D_I} = f_1+f_2 J$ and   $g\mid_{\mathbb D_I} = g_1+g_2 J$ with $f_1,f_2,g_1,g_2 \in Hol(\mathbb D_I)$ and as $f\prec_{I} g$ then $f_k\prec_{I} g_k$ for $k=1,2$. \\
b)   {}  $   \| f (z ) \|^p \leq 2^p \left( \|f_1 (z)\|^p+ \|f_2(z) \|^p \right), \quad \forall z\in \mathbb D_I . $\\
c) In the following inequalities the   coordinates  are changed to spherical coordinates $q= (r \cos\theta_1, r\sin\theta_1\cos\theta_2, r\sin\theta_1\sin\theta_2\cos\theta_3, r\sin\theta_1\sin\theta_2\theta_3) $ where $\theta_1,\theta_2 \in [0,2\pi)$, \  $\theta_3\in [0, \pi]$, and $z=x+yI $ for $q=x+I_q y$, where $x,y\in\mathbb R$.
 $$    \displaystyle  M_p(r,f) ^p = \frac{1}{4\pi}  \int_{\mathbb S(0,r)}  \| f (q ) \|^p d\mu \leq  \frac{2^p}{4\pi}  \int_{\mathbb S(0,r)}  \| f (z ) \|^p + \| f (\bar z ) \|^p d\mu   $$
$$ \leq  \frac{2^{p+1 }\pi^2}{4\pi}  \int_{0}^{2\pi}  ( \| f (z ) \|^p + \| f (\bar z ) \|^p )  d \theta_1   = \frac{2^{p+2 }\pi^2}{4\pi}  \int_{0}^{2\pi}  \| f (z ) \|^p   d \theta_1    $$
$$   \leq   \frac{2^{2p+2 }\pi^2}{4\pi}  \int_{0}^{2\pi}  (\| f _1(z ) \|^p  +\| f _2(z ) \|^p  )  d \theta_1  \leq  \frac{2^{2p+2 }\pi^2}{4\pi}  \int_{0}^{2\pi}  (\| g_1(z ) \|^p  +\| g _2(z ) \|^p  )  d \theta_1  $$
$$ \leq \frac{2^{2p+3 }\pi^2}{4\pi}  \int_{0}^{2\pi}  (\| f (z ) \|^p  + )  d \theta_1  =   2^{2p+2 }\pi^2 M_{p, I}(r,g)$$
\end{proof}
\begin{rem}
{\rm
We see that if $f,g \in\mathcal{R}(B(0;1))$ with $f(0)=g(0)$, such that $g\prec_{\mathcal N} f$, then  all inequalities of previous theorem
are true for any $I\in \mathbb S$.}
\end{rem}

\begin{thm}(Rogosinski inequality) Let $f(q)=\sum_{k=1}^{\infty}q^{k}a_{k}$, $g(q)=\sum_{k=1}^{\infty}q^{k}b_{k}$ with $f,g\in\mathcal {R}(B(0;1)$. If $f\prec_{\mathcal N} g$, then we have
$$\sum_{k=1}^{n}\|a_{k}\|^{2}\le \sum_{k=1}^{n}\|b_{k}\|^{2}, \mbox{ for all } n=1, 2, ...,.$$
\end{thm}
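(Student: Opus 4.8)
The plan is to reduce the quaternionic statement to two classical (complex) Rogosinski inequalities by restricting to a single slice $\mathbb C_I$ and invoking the Splitting Lemma. The decisive feature of the hypothesis $f\prec_{\mathcal N} g$ is that the subordinating function $w$ is quaternionic intrinsic, so that $g\bullet w$ is the \emph{ordinary} composition $g\circ w$ (see the remark following Definition \ref{bullet}) and, moreover, $w(\mathbb D_I)\subseteq\mathbb C_I$ for every $I\in\mathbb S$. Consequently the identity $f=g\circ w$ survives restriction to any slice: for a fixed $I\in\mathbb S$ one has $f|_{\mathbb D_I}=g|_{\mathbb D_I}\circ\omega$, where $\omega:=w|_{\mathbb D_I}$ is a holomorphic self-map of the disk $\mathbb D_I$ with $\omega(0)=0$; that is, $\omega$ is a genuine Schwarz function on $\mathbb C_I\cong\mathbb C$.

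Next I would apply the Splitting Lemma (Theorem \ref{1.3}(3)). Fixing $J\in\mathbb S$ with $J\perp I$ and writing each coefficient as $a_k=a_k'+a_k''J$, $b_k=b_k'+b_k''J$ with $a_k',a_k'',b_k',b_k''\in\mathbb C_I$, the restrictions split as $f|_{\mathbb D_I}=F+GJ$ and $g|_{\mathbb D_I}=\widetilde F+\widetilde G J$, where $F(z)=\sum_{k\ge1}z^k a_k'$, $G(z)=\sum_{k\ge1}z^k a_k''$, $\widetilde F(z)=\sum_{k\ge1}z^k b_k'$, $\widetilde G(z)=\sum_{k\ge1}z^k b_k''$ are holomorphic functions $\mathbb D_I\to\mathbb C_I$. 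Substituting $\omega$ and comparing the two $J$-components (which is legitimate by the uniqueness in the Splitting Lemma) yields $F=\widetilde F\circ\omega$ and $G=\widetilde G\circ\omega$, i.e. $F\prec\widetilde F$ and $G\prec\widetilde G$ as holomorphic functions of one complex variable.

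At this point the classical Rogosinski inequality (see \cite{duren}), applied separately to the two subordinations $F\prec\widetilde F$ and $G\prec\widetilde G$, gives $\sum_{k=1}^n|a_k'|^2\le\sum_{k=1}^n|b_k'|^2$ and $\sum_{k=1}^n|a_k''|^2\le\sum_{k=1}^n|b_k''|^2$ for every $n$. Adding these and using that $\{1,I,J,IJ\}$ is an orthonormal basis of $\mathbb H$, so that $\|a_k\|^2=|a_k'|^2+|a_k''|^2$ and likewise for $b_k$, one obtains $\sum_{k=1}^n\|a_k\|^2\le\sum_{k=1}^n\|b_k\|^2$, as desired; since the quantities $\|a_k\|$, $\|b_k\|$ do not depend on the auxiliary choices of $I$ and $J$, the inequality is intrinsic. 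I expect the only genuinely delicate point to be the justification that the subordination passes to each of the two $\mathbb C_I$-components \emph{with the same Schwarz function} $\omega$; this is exactly where the intrinsic hypothesis $w\in\mathcal N$ is indispensable, since for a general $w$ the $\bullet$-composition does not respect the splitting and the argument breaks down.
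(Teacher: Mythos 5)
Your proposal is correct and follows essentially the same route as the paper: the paper also splits $f=f_1+f_2J$, $g=g_1+g_2J$ with coefficients in a fixed $\mathbb C_I$, observes that the intrinsic subordinating function passes to each component (its case (2) reduces to its case (1), which is just the classical Rogosinski theorem of Duren applied on a slice), and adds the two resulting inequalities using $\|a_k\|^2=|a_k'|^2+|a_k''|^2$. Your explicit justification that both components are subordinate to their counterparts via the \emph{same} Schwarz function $\omega=w|_{\mathbb D_I}$ is exactly the point the paper asserts with the phrase ``$f\prec_{\mathcal N}g$ then $f_n\prec_{\mathcal N}g_n$,'' so no gap remains.
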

\begin{proof} It is given in two cases:
\begin{enumerate}
\item If  $f, g\in \mathcal{V}(B(0;1))$ (for $\mathcal{V}$ see the notation in Remark \ref{inter}).  From $f, g\in \mathcal{V}(B(0;1))$, there exists $I, J\in \mathcal{S}$ such that $a_{k}\in \mathbb{C}_{I}$, and $b_{k}\in \mathbb{C}_{J}$, for all $k=1, 2, \ldots $.
From Definition \ref{defsub}, we have $f(q)=g(w(q))$, with $w\in {\mathcal{N}}(B(0;1))$, which immediately implies that $I=J$. Therefore,  it follows that $f, g\in \mathcal V_{I}( B(0;1))$. Applying then Theorem 6.2, p. 192 in \cite{duren} (on the slice
$\mathbb{C}_{I}$), we immediately get the required inequality.
\item Consider $f,g\in\mathcal{R}(B(0;1) )$. Let  $I, J\in \mathcal{S}$ and let $a_{1,k},a_{2,k},b_{1,k},b_{2,k}\in \mathbb{C}_{I}$ be such that $a_k=a_{1,k}+a_{2,k}J$, $b_k=b_{1,k}+b_{2,k}J$ for all $k=1, 2, ..., $. Thus $f(q)=f_1(q)+f_2(q)J $ and $g(q)=g_1(q)+g_2(q)J$ for each $q\in B(0;1)$, where    $$f_{n}(q)=\sum_{k=1}^{\infty}q^{k}a_{n,k},  \ \textrm{ and }  \  g_n(q)=\sum_{k=1}^{\infty}q^{k}b_{n,k} , \quad n=1,2.$$ As $f\prec_{\mathcal N} g$ then $f_n\prec_{\mathcal N} g_n$ for $n=1,2$. From the previous case we have
$$\sum_{k=1}^{n}\|a_{1,k}\|^{2}\le \sum_{k=1}^{n}\|b_{1,k}\|^{2}, \quad \sum_{k=1}^{n}\|a_{2,k}\|^{2}\le \sum_{k=1}^{n}\|b_{2,k}\|^{2}\mbox{ for all } n=1, 2, ...,$$
and by adding respective terms one has the result.
\end{enumerate}
\end{proof}

The following result is on independent interest, but will also be useful later.
\begin{prop} Let $g:\ B(0;R)\to \mathbb{H}$, $R>0$, be a function slice regular of the form $g(q)=\sum_{n=0}^\infty q^n a_n$.
\begin{enumerate}
\item There exists a power series
$g_r^{-\bullet}(q)=\sum_{n=0}^\infty q^n b_n$  convergent in a disc with positive radius, such that $(g\bullet g_r^{-\bullet})(q)=q$ and $g_r^{-\bullet}(0)=0$ if and only if $g(0)=0$ and $g'(0)\not=0$.
\item 
There exists a power series $g_l^{-\bullet}(q)=\sum_{n=0}^\infty q^n b_n$ convergent in a disc with positive radius,
such that $(g_l^{-\bullet}\bullet g)(q)=q$ and $g_l^{-\bullet}(0)=0$ if and only if $g(0)=0$ and $g'(0)\not=0$.
\end{enumerate}
\end{prop}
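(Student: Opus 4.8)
The plan is to establish both equivalences by the same mechanism: reduce the functional equation to a triangular recursion for the coefficients, which is solvable exactly when $a_1\neq 0$, and then secure convergence by a majorant argument. I would first dispose of necessity. Writing $g_r^{-\bullet}(q)=\sum_{n\ge 1}q^nb_n$ (the condition $g_r^{-\bullet}(0)=0$ forces $b_0=0$) and recalling from Definition \ref{bullet} that $(g\bullet g_r^{-\bullet})(q)=\sum_{n\ge 0}(g_r^{-\bullet}(q))^{*n}a_n$, I compare the two sides of $(g\bullet g_r^{-\bullet})(q)=q$ order by order. The constant term of the left-hand side is $a_0$, forcing $a_0=0$, i.e. $g(0)=0$; and since each $(g_r^{-\bullet})^{*n}$ with $n\ge 2$ has order $\ge 2$, the coefficient of $q$ is $b_1a_1$, so $b_1a_1=1$ and in particular $a_1=g'(0)\neq0$. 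The same order-$0$ and order-$1$ comparison applied to $(g_l^{-\bullet}\bullet g)(q)=\sum_{n\ge0}(g(q))^{*n}b_n=q$ (whose left-hand side is summable, cf. the remarks following Definition \ref{bullet}, only if $g(0)=0$) gives $b_0=0$ and $a_1b_1=1$, hence again $g(0)=0$ and $g'(0)\neq0$.

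For sufficiency, assume $a_0=0$ and $a_1\neq0$, and I build the right inverse formally first. Rewriting the equation as $g_r^{-\bullet}a_1=q-\sum_{n\ge2}(g_r^{-\bullet})^{*n}a_n$ and comparing the coefficient of $q^m$ yields
$$b_m a_1=\delta_{m,1}-\sum_{n=2}^{m}\big[\,q^m\,\big]\big((g_r^{-\bullet})^{*n}\big)\,a_n,$$
where $[\,q^m\,]$ denotes coefficient extraction. The crucial observation is that for $n\ge2$ this coefficient involves only $b_1,\dots,b_{m-1}$, because $(g_r^{-\bullet})^{*n}$ has order $\ge n$; thus $b_m$ is isolated with the right factor $a_1$, and since $a_1$ is invertible in $\mathbb H$ we solve $b_m=(\cdots)a_1^{-1}$ uniquely and recursively from $b_1=a_1^{-1}$. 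For the left inverse the same scheme applies to $\sum_{n\ge0}(g(q))^{*n}b_n=q$: the coefficient of $q^m$ isolates $b_m$ with the factor $a_1^{m}$ coming from the term $n=m$, since the lowest coefficient of $(g)^{*m}$ is $a_1^{m}$, while all terms $n<m$ contribute only $b_1,\dots,b_{m-1}$; invertibility of $a_1$ again yields a unique $b_m$.

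It remains to prove convergence, which I expect to be the main technical point. Here I exploit that the quaternionic norm is multiplicative, $\|\alpha\beta\|=\|\alpha\|\,\|\beta\|$ for $\alpha,\beta\in\mathbb H$. Let $P(t)=\sum_{m\ge1}p_mt^m$ be the real series defined by the recursion above with every quaternion replaced by its norm, so $p_1=\|a_1\|^{-1}$ and, for $m\ge2$, $\|a_1\|p_m=\sum_{n=2}^{m}[\,t^m\,](P^n)\|a_n\|$. An induction on $m$ gives $\|b_m\|\le p_m$, since the norm of the coefficient of $q^m$ in $(g_r^{-\bullet})^{*n}$ is bounded, by the triangle inequality and multiplicativity, by $[\,t^m\,](P^n)$. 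Summing the recursion, $P$ satisfies the scalar functional equation
$$\|a_1\|\,P(t)=t+\sum_{n\ge2}\|a_n\|\,P(t)^n,$$
that is $t=\Psi(P(t))$ with $\Psi(s)=\|a_1\|\,s-\sum_{n\ge2}\|a_n\|\,s^n$. Because $g$ converges on $B(0;R)$, the series $\sum_n\|a_n\|s^n$ has positive radius, so $\Psi$ is real-analytic near $0$ with $\Psi(0)=0$ and $\Psi'(0)=\|a_1\|>0$; by the analytic inverse function theorem $\Psi$ has an analytic inverse with vanishing constant term, whose Taylor coefficients (nonnegative by the recursion) are exactly the $p_m$. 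Hence $\sum_m p_mt^m$, and a fortiori $\sum_m\|b_m\|t^m$, converges for small $t$, so $g_r^{-\bullet}$ has positive radius of convergence. The left inverse is handled by the identical majorant device, the only change being that one solves for $p_m$ against the factor $\|a_1\|^{m}$ arising from the term $n=m$, which still leaves the generating function analytic at the origin.
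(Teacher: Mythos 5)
Your proof is correct and follows essentially the same route as the paper's: the same order-by-order necessity argument, the same triangular recursion $b_m a_1 + P_m(b_1,\dots,b_{m-1},a_2,\dots,a_m)=0$ solved via the invertibility of $a_1$ in $\mathbb H$, and the same real majorant $\Psi(s)=\|a_1\|s-\sum_{n\ge 2}\|a_n\|s^n$ whose compositional inverse dominates the $\|b_m\|$ (the paper cites Cartan where you invoke the analytic inverse function theorem, which is the same fact). The only caveat, shared equally by the paper's ``similar computations'' remark, is that for the left inverse the majorant recursion takes the form $\|a_1\|^m p_m=\sum_{n<m}\bigl[t^m\bigr](A(t)^n)p_n$ with $A(t)=\sum_{k\ge1}\|a_k\|t^k$, which is no longer literally the inverse-function equation, so calling the device ``identical'' glosses over a short extra estimate needed to see that this generating function is still analytic at the origin.
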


\begin{proof} (1) Assume that $g_r^{-\bullet}$ exists. Then $\sum_{n=0}^\infty\left(\sum_{m=1}^\infty q^m b_m\right) ^{*n} a_n =q$. By explicitly writing the terms of the equality
we see that we have
\begin{equation}\label{composition series}
a_0+ \left(\sum_{m=1}^\infty q^m b_m\right)a_1 + \left(\sum_{m=1}^\infty q^m b_m\right) ^{*2}a_2+\ldots +\left(\sum_{m=1}^\infty q^m b_m\right) ^{*n}a_n+\ldots =q
\end{equation}
and so to have equality it is necessary that $a_0=0$, i.e. $g(0)=0$, and $b_1a_1=1$ and so $a_1\not=0$, i. e. $g'(0)\not=0$.
To prove that the condition is sufficient, we observe that for $n\geq 2$, the coefficient of $q^n$ is zero on the right hand side of (\ref{composition series}) while on the right hand side it is
given by
\begin{equation}\label{coefficients}
b_n a_1 + P_n(b_1,\ldots , b_{n-1},a_2,\ldots ,a_n),
\end{equation}
thus we have $b_n a_1 + P_n(b_1,\ldots , b_{n-1},a_2,\ldots ,a_n)=0$, (where the polynomials $P_n$ are linear in the $a_i$'s and they contain all the possible monomials $b_{j_1}\ldots b_{j_r}$ with $j_1+\ldots +j_r=n$ and thus also $b_1^n$).   In particular we have, $b_1a_1=1$ and so $ b_1=a_1^{-1}$, $b_2a_1 +b_1^2a_2=0$ and so $b_2=-a_1^{-2}a_2a_1^{-1}$. By induction, if we have computed $b_1,\ldots ,b_{n-1}$ we can compute $b_n$ using (\ref{coefficients}) and the fact that $a_1$ is invertible and this concludes the proof. The function $g_r^{-\bullet}$ is right inverse of $g$, by its construction.\\
We now show that $g_r^{-\bullet}$ converges in a disc with positive radius following the proof of  \cite[Proposition 9.1]{cartan}. Construct a power series with real coefficients $A_n$ which is a majorant of $g$ as follows: set $\bar g(q)=q A_1-\sum_{n=2}^\infty q^n A_n$ with $A_1=\|a_1\|$ and $A_n\geq \|a_n\|$, for all $n\geq 2$. It is possible to compute the inverse  of $\bar g$ with respect to the (standard) composition to get the series $\bar g^{-1}(q)=\sum_{n=1} q^n B_n$. The coefficients $B_n$ can be computed with the formula
$$
B_n A_1 + P_n(B_1,\ldots , B_{n-1},A_2,\ldots ,A_n),
$$
analogue of (\ref{coefficients}). Then we have $B_1=A_1^{-1}=\|a_1\|^{-1}$, $B_2=A_1^{-2}(-A_2)A_1^{-1}\geq \|a_1\|^{-2}\|a_2\|\cdot \|a_1\|^{-1}=\|b_2\|$ and, inductively
$$B_n=Q_n(A_1,\ldots ,A_n)\geq Q_n(\|a_1\|,\ldots ,\|a_n\|) =\|b_n\|.$$
We conclude that the radius of convergence of $g_r^{-\bullet}$ is greater than or equal to the radius of convergence of $\bar g^{-1}$ which is positive, see \cite[p. 27]{cartan}.

(2) It can be proven with computation similar to those used to prove (1). The function $g_l^{-\bullet}$ is left inverse of $g$.
\end{proof}

 \begin{lem} \label{3.3}
\begin{enumerate}
 \item If $g, w: \, B(0;1)\to \mathbb{H}$ are slice regular functions and $g$ is univalent on $B(0;1)$ with $g(0)=0$ and $g^{\prime}(0)\not=0$, then
$g^{-\bullet}_{l}\bullet (g \bullet w)(q)=w(q)$, for all $q\in B(0;r)$ for a suitable $0<r<1$. If $w\in\mathcal N(B(0;1)$ then $r=1$.

 \item If $g:B(0;1)\to \mathbb{H}$ is a slice regular univalent function on $B(0;1)$ with $g(0)=0$, $g^{\prime}(0)\not=0$ and $w\in {\mathcal{N}}(B(0;1))$, then
$\partial_{s}(g\circ w)(0)=\partial_{s}(g)(0)\cdot \partial_{s}(w)(0)$.
\end{enumerate}
\end{lem}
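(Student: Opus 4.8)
The plan for part (1) is to exploit the fact that, by the preceding Proposition, the hypotheses $g(0)=0$ and $g'(0)\neq 0$ are exactly what guarantee the existence of the left $\bullet$-inverse $g^{-\bullet}_{l}$, characterized by $g^{-\bullet}_{l}\bullet g=\mathcal I$, where $\mathcal I(q)=q$ acts as a two-sided identity for $\bullet$ (indeed $\mathcal I\bullet w=\sum_{n}(w)^{*n}\delta_{n,1}=w$ straight from Definition \ref{bullet}). If associativity of $\bullet$ were unrestricted I would simply write
\[
g^{-\bullet}_{l}\bullet (g\bullet w)=(g^{-\bullet}_{l}\bullet g)\bullet w=\mathcal I\bullet w=w,
\]
and be done. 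The legitimacy of the middle equality is governed by Proposition \ref{assoc}, which yields associativity precisely when the innermost factor has real coefficients. Hence the natural strategy is to treat the intrinsic case first, where this reduction is fully justified, and then address the general case locally.

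When $w\in\mathcal N(B(0;1))$ (so $w$ has real coefficients and $w(0)=0$), Proposition \ref{assoc} applies verbatim with $f=g^{-\bullet}_{l}$, and the displayed chain of equalities is a valid identity of power series; moreover for intrinsic $w$ the operation $\bullet$ is ordinary composition, so $g\bullet w=g\circ w$ is slice regular on all of $B(0;1)$. Since the two slice regular functions $g^{-\bullet}_{l}\bullet(g\bullet w)$ and $w$ agree on a neighbourhood of the origin, the identity principle for slice regular functions propagates the equality to the whole connected ball, giving $r=1$. For a general, non-intrinsic $w$ I would instead argue locally: the preceding Proposition ensures that both $g\bullet w$ and $g^{-\bullet}_{l}$ converge on balls of positive radius, and the composition-convergence estimate then delivers a radius $r\in(0,1)$ on which all series rearrange absolutely. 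The \emph{main obstacle} is exactly this associativity step: as the example $f=q^{2}c$, $g=qa$, $w=q^{2}b$ preceding Lemma \ref{propertybullet} shows, $\bullet$ is genuinely non-associative for non-intrinsic inner functions, so the passage to $(g^{-\bullet}_{l}\bullet g)\bullet w$ cannot be taken for granted; this is precisely what confines the clean full-ball statement to the case $w\in\mathcal N$.

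For part (2) the hypothesis $w\in\mathcal N(B(0;1))$ is in force from the start, which makes things straightforward. Writing $g(q)=\sum_{n\ge1}q^{n}a_{n}$ and $w(q)=\sum_{n\ge1}q^{n}b_{n}$ with all $b_{n}\in\mathbb R$ and $w(0)=0$, the composition $g\circ w$ is slice regular and coincides with $g\bullet w=\sum_{n\ge1}(w)^{*n}a_{n}$. I would then read off the coefficient of $q^{1}$: the only contribution comes from $w\,a_{1}$, namely $b_{1}a_{1}$. Since for a slice regular function the slice derivative at the origin equals this first coefficient (Theorem \ref{1.3}, part 6), we get $\partial_{s}(g\circ w)(0)=b_{1}a_{1}$, whereas $\partial_{s}(g)(0)\,\partial_{s}(w)(0)=a_{1}b_{1}$. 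Because $b_{1}=\partial_{s}(w)(0)$ is real it commutes with $a_{1}$, so $b_{1}a_{1}=a_{1}b_{1}$ and the chain rule follows; equivalently one may restrict to a slice $\mathbb C_{I}$, on which $w$ maps $\mathbb C_{I}$ into itself, and invoke the classical one-variable chain rule. Here the only delicate point, the order of the noncommutative product, is resolved exactly by the intrinsic hypothesis on $w$, which is therefore essential to the stated form of the identity.
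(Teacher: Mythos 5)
Your treatment of part (1) in the intrinsic case coincides with the paper's: the entire published proof of (1) is the single sentence ``the assertion follows from Proposition \ref{assoc}'', i.e.\ exactly the chain $g_l^{-\bullet}\bullet(g\bullet w)=(g_l^{-\bullet}\bullet g)\bullet w=\mathcal I\bullet w=w$ that you write down, and your remark that $\mathcal I$ is a two-sided identity for $\bullet$ is the only ingredient the paper leaves implicit. For part (2) you extract the coefficient of $q$ from the series $g\bullet w$ and use the reality of $b_1=\partial_s w(0)$ to commute it past $a_1$, whereas the paper writes the difference quotient
$\frac{(g\circ w)(h)-(g\circ w)(0)}{h}=\frac{g(w(h))-g(w(0))}{w(h)}\cdot\frac{w(h)}{h}$
for real $h$ (legitimate because $w(h)\in\mathbb R$ when $w$ is intrinsic). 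Both arguments are correct and hinge on the same fact, the reality of the relevant datum of $w$, so the difference is cosmetic.

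Where you and the paper part ways is the general, non-intrinsic $w$ in part (1), and there your suspicion is better founded than your proposed remedy. Proposition \ref{assoc} requires the \emph{innermost} series to have real coefficients, so it yields $(g_l^{-\bullet}\bullet g)\bullet w=g_l^{-\bullet}\bullet(g\bullet w)$ only for $w\in\mathcal N$; the paper nonetheless invokes it for arbitrary $w$, which is a gap in the paper, not in your proposal. However, your fallback of shrinking the radius until ``all series rearrange absolutely'' cannot close that gap: the obstruction is algebraic, not analytic, and restricting the domain does not repair an identity of Taylor coefficients that already fails at a fixed finite order. Concretely, take $g(q)=q+q^{2}\varepsilon i$ with $0<\varepsilon<1/2$ (univalent on $B(0;1)$, $g(0)=0$, $g'(0)=1$) and $w(q)=qj$. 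One computes $b_1=1$, $b_2=-\varepsilon i$, $b_3=2(\varepsilon i)^{2}$, $g\bullet w=qj+q^{2}j^{2}\varepsilon i$, and the coefficient of $q^{3}$ in $g_l^{-\bullet}\bullet(g\bullet w)$ comes out to $j^{3}(\varepsilon i)^{2}-j^{2}(\varepsilon i)j(\varepsilon i)=2\varepsilon^{2}j\neq0$, while $w$ has no cubic term. So the ``suitable $r\in(0,1)$'' clause of the lemma is not reachable by any rearrangement argument and appears to be false as stated; your decision to confine the clean proof to $w\in\mathcal N$ isolates exactly the part of the statement that is actually provable.
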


\begin{proof}
(1) The assertion follows from Proposition \ref{assoc}.

(2) We have:
$$
\partial_s (g\circ w)(0)= (g\circ w)'(0) =\lim_{h\to 0}\frac{(g \circ w)(h)- (g \circ w)(0)}{h}, \quad h\in\mathbb R.
$$
Then the statement follows by standard arguments since we can write (remember that $w(0)=0$):
$$
\frac{(g \circ w)(h)- (g \circ w)(0)}{h}=\frac{g  (w(h))- g  (w(0))}{w(h)} \, \frac{w(h)}{h}.
$$
\end{proof}

\begin{prop}\label{3.4}
 If $f \prec_{\mathcal N} g$ then $f(0)=g(0)$ and $f(B(0;r))\subseteq g(B(0;1))$ for a suitable $0<r<1$.
 \end{prop}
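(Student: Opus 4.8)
The plan is to unwind the definition of $\prec_{\mathcal N}$ and reduce everything to the ordinary composition of functions. By Definition \ref{defsub}(2), the hypothesis $f\prec_{\mathcal N}g$ furnishes a function $w\in\mathcal N(B(0;1))$ with $w(0)=0$ and $\|w(q)\|<1$ for all $q\in B(0;1)$, and such that $f(q)=(g\bullet w)(q)$ on $B(0;1)$. The first thing I would invoke is the Remark following Definition \ref{bullet}: since $w$ is quaternionic intrinsic, the $\bullet$-composition collapses to the genuine composition, i.e. $(g\bullet w)(q)=(g\circ w)(q)$, so that $f=g\circ w$ on $B(0;1)$. That this composition is a bona fide slice regular function defined on all of $B(0;1)$ is guaranteed by the composition theorem stated just before Section 3, since the inner function $w$ lies in $\mathcal N$ and maps $B(0;1)$ into $B(0;1)$, which is the domain of $g$.

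For the equality $f(0)=g(0)$, I would simply evaluate at the origin: since $w(0)=0$,
\[
f(0)=(g\bullet w)(0)=g(w(0))=g(0),
\]
which also reads off directly from the series $g(q)=\sum_{n\ge0} q^n a_n$, as only the zeroth-order term $a_0=g(0)$ survives when $w(0)=0$.

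For the inclusion, the key observation is that $w$ maps $B(0;1)$ into itself: the condition $\|w(q)\|<1$ says precisely that $w(q)\in B(0;1)$ for every $q\in B(0;1)$. Consequently, for each such $q$,
\[
f(q)=g(w(q))\in g(B(0;1)),
\]
so that in fact $f(B(0;1))\subseteq g(B(0;1))$. In particular the stated (weaker) conclusion $f(B(0;r))\subseteq g(B(0;1))$ holds for every $r\in(0,1)$, and one may take any such $r$.

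I expect no serious obstacle here: the entire content is that, for an intrinsic inner function, $\bullet$ is ordinary composition and that $w$ is a self-map of the unit ball. The only point deserving care is the justification that $g\circ w$ is well defined and actually coincides with $f$ on the whole ball $B(0;1)$ (rather than merely on a smaller ball, as can happen for the purely formal composition $\bullet$ of general power series); this is exactly where the membership $w\in\mathcal N$ together with $w(B(0;1))\subseteq B(0;1)$ enters, via the cited composition theorem.
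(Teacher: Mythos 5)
Your proof is correct and follows essentially the same route as the paper: unwind the definition, use that $\bullet$ reduces to ordinary composition for intrinsic $w$, and conclude from the fact that $w$ maps $B(0;1)$ into itself. The only (harmless) difference is that the paper invokes the quaternionic Schwarz lemma to get $\|w(q)\|\le\|q\|$, whereas you observe that the condition $\|w(q)\|<1$ built into Definition \ref{defsub} already suffices — which is, if anything, slightly more economical, and like the paper you actually obtain the stronger inclusion $f(B(0;1))\subseteq g(B(0;1))$.
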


\begin{proof}
By hypothesis, $f(q)=(g \circ w)(q))$ for all $q\in B(0;1)$, where $w\in\mathcal N(B(0;1))$. By Schwarz's lemma (Theorem 4.1 in \cite{GS}), we get $\|w(q)\|\le \|q\|$ for all $q\in B(0;1)$, which implies
$f(B(0;1))=\{(g \circ w)(q) ; q\in B(0;1)\}=\{g(\xi) ; \|\xi\|\le \|q\|, q\in B(0;1)\}\subseteq g(B(0;1))$.
\end{proof}

\begin{prop}\label{3.5}
\begin{enumerate}
 \item If $f \prec_{\mathcal N} g$ on $B(0;1)$ then $f(\overline{B(0;r)})\subset g(\overline{B(0;r)})$ for all $r\in (0, 1)$, the equality taking place if and only if $f(q)=q \lambda$, with an $\lambda\in \mathbb{H}$ satisfying $\|\lambda\|=1$ ;

 \item If $f \prec_{\mathcal N} g$ on $B(0;1)$ then $\max\{\|f(q)\| ; \|q\|\le r\}\le \max\{\|g(q)\| ; \|q\|\le r\}$ for all $r\in (0, 1)$, the equality taking place if and only if $f(q)=q \lambda$, with an $\lambda\in \mathbb{H}$ satisfying $\|\lambda\|=1$ ;

 \item If $f \prec_{{\mathcal{N}}} g$ then $\|\partial_{s}(f)(0)\|\le \|\partial_{s}g(0)\|$, the equality taking place if and only if $f(q)=q \lambda$, with an $\lambda\in \mathbb{H}$ satisfying $\|\lambda\|=1$.
\end{enumerate}
\end{prop}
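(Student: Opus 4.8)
The plan is to reduce everything to the representation $f=g\circ w$ supplied by the definition of $\prec_{\mathcal N}$ and then to invoke the quaternionic Schwarz lemma, comparing where necessary with its complex counterpart read on a single slice. First I would record that, since $f\prec_{\mathcal N}g$, there is $w\in\mathcal N(B(0;1))$ with $w(0)=0$ and $\|w(q)\|<1$ on $B(0;1)$ such that $f(q)=(g\bullet w)(q)$; because $w$ is quaternionic intrinsic, the composition $\bullet$ collapses to ordinary composition (see the Remark following Definition \ref{bullet}), so $f=g\circ w$. Applying the Schwarz lemma for slice regular functions (Theorem 4.1 in \cite{GS}, already exploited in Proposition \ref{3.4}) to $w$ yields $\|w(q)\|\le\|q\|$ for every $q\in B(0;1)$, and in particular $\|\partial_s w(0)\|\le 1$.

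For part (1) I would fix $r\in(0,1)$ and take any $q\in\overline{B(0;r)}$; then $\|w(q)\|\le\|q\|\le r$, so $w(q)\in\overline{B(0;r)}$ and hence $f(q)=g(w(q))\in g(\overline{B(0;r)})$, which gives the inclusion $f(\overline{B(0;r)})\subseteq g(\overline{B(0;r)})$. Part (2) is then immediate, since passing to norms and taking the maximum over $\overline{B(0;r)}$ in this inclusion produces $\max\{\|f(q)\|:\|q\|\le r\}\le\max\{\|g(q)\|:\|q\|\le r\}$. For part (3) I would use the chain rule at the origin (as in the proof of Lemma \ref{3.3}(2)) to write $\partial_s f(0)=\partial_s g(0)\cdot\partial_s w(0)$, whence $\|\partial_s f(0)\|=\|\partial_s g(0)\|\,\|\partial_s w(0)\|\le\|\partial_s g(0)\|$ by the bound $\|\partial_s w(0)\|\le 1$.

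The delicate point, and the step I expect to be the real obstacle, is the equality discussion, which I would treat uniformly by showing that equality forces the inner function $w$ to be extremal in the Schwarz lemma. Restricting to a slice $\mathbb C_I$ and setting $w_I=w|_{\mathbb D_I}$, the intrinsic character of $w$ means that $w_I$ is a holomorphic self-map of the disc $\mathbb D_I$ fixing the origin, so the classical Schwarz lemma is available: equality in $\|\partial_s w(0)\|\le 1$ for (3), and equality of the compact images or of the maxima at a single radius $r$ for (1) and (2), each force $w_I$ to be a rotation. Here one must reconcile this with the constraint $w\in\mathcal N$, namely that $w$ has real power series coefficients, to conclude that $w$ is a genuine rotation of the ball; substituting back into $f=g\circ w$ is what yields the stated extremal form. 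I expect the bulk of the work to lie precisely in justifying that the Schwarz equality case passes faithfully between the slice and the quaternionic setting and in identifying the extremal function, and I would be careful at this stage, since the naive transcription of the complex equality statement must be checked against the real-coefficient restriction imposed by membership in $\mathcal N$.
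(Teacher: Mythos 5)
Your argument coincides with the paper's: the inclusion and the maximum\hyp{}modulus inequality come from $f=g\circ w$ together with $\|w(q)\|\le\|q\|$ (the quaternionic Schwarz lemma, exactly as invoked in Proposition \ref{3.4}), and part (3) from $\partial_s f(0)=\partial_s g(0)\cdot\partial_s w(0)$ with $\|\partial_s w(0)\|\le 1$, which is precisely the paper's two-line proof. The equality discussion that you single out as the delicate point is in fact omitted from the paper's proof altogether, and your concern is well founded: the Schwarz extremal case yields $w(q)=q\lambda$ with $\|\lambda\|=1$ (so the condition in the statement should presumably be phrased in terms of $w$ rather than $f$), and since $w\in\mathcal N(B(0;1))$ must have real power series coefficients this forces $\lambda=\pm 1$, so the stated equality condition does not survive the naive transcription from the complex case --- you have identified a real defect in the statement rather than a gap in your own argument.
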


\begin{proof} The proof of (1), (2) follows from the Schwarz's lemma  and reasoning  as in the proof of Proposition \ref{3.4}.

(3)  Since $w\in {\mathcal{N}}(B(0;1))$, we get $\partial_{s}(f)(0)=\partial_{s}(g)(0)\cdot \partial_{s}(w)(0)$. Since by Schwarz's lemma we have
$\|\partial_{s}(w)(0)\|\le 1$, we get the desired conclusion.
\end{proof}

\section{Starlike and Convex Slice Regular Functions}

For our considerations, firstly we need the concepts of starlike and convex sets and functions.

\begin{defn} \begin{enumerate} \item $A\subset \mathbb{H}$ is called starlike with respect to the origin $0$, if for all $t\in [0, 1]$
we have $t A\subset A$.
\item $A\subset \mathbb{H}$ is called convex if for all $t\in [0, 1]$ we have $(1-t)A + t A\subset A$.
\end{enumerate}
\end{defn}

\begin{rem}
The definitions for starlike and convex sets in $\mathbb{H}$ are in fact the standard well-known definitions in $\mathbb{R}^{4}$.
Thus, $A\subset \mathbb{R}^{4}$ with $0\in A$, is starlike if for any point $p\in A$, the (Euclidean) segment determined by $p$ and the origin
$0$ is entirely contained in $A$. Also, the convexity of $A\subset \mathbb{R}^{4}$ is understood as the property that for any two points
$p, q\in A$, the segment determined by $p$ and $q$ is entirely contained in $A$.
\end{rem}

\begin{defn}\label{4.6} Let $f\in\mathcal{R}(B(0;1))$ be a
function satisfying $f(0)=0$ and $\partial_{s}(f)(0)=1$.
\begin{enumerate}
\item  $f$ is called slice-starlike on $B(0;1)$ if for every $I\in \mathbb{S}$, we have
$${\rm Re}\left [(f(q))^{-1}\cdot q \cdot \partial_{s}(f)(q)\right ] > 0, \mbox{ for all } q=x+ Iy\in B(0;1)\bigcap \mathbb{C}_{I}.$$
\item  $f$ is called slice-convex on $B(0;1)$ if for every $I\in \mathbb{S}$, we have
$${\rm Re}\left [(\partial_{s}f(q))^{-1}\cdot q \cdot \partial_{s}^{2}(f)(q)\right ]+ 1 > 0, \mbox{ for all } q=x+ Iy\in B(0;1)\bigcap \mathbb{C}_{I}.$$
\end{enumerate}
 \end{defn}

\begin{rem}\label{4.8}
Taking into account the Splitting Lemma  in Theorem \ref{1.3}, it is clear that in Definition \ref{4.6}
the corresponding inequalities characterize in fact the starlikeness and convexity of the $\mathbb{C}_I$-valued function
$f_{I}:B(0;1)\bigcap \mathbb{C}_{I}\to \mathbb{C}_{I}$ of complex variable $z=x+ I y$, where $f_{I}(z)=f(z)=f(x +I y)$, and $x^{2}+y^{2} < 1$.
\end{rem}

For more general classes of functions we introduce the following definition.

\begin{defn}\label{4.7} Let $f\in\mathcal{R}(B(0;1))$ be a
function satisfying $f(0)=0$ and $\partial_{s}(f)(0)=1$.
\begin{enumerate}
 \item   $f$ is called starlike if $f(B(0;1)) $  is a  starlike  set with respect to the origin.
\item     $f$ is called  convex  if $f(B(0;1) $   is a convex set.
\item
 Let $I\in \mathbb S $, then   $f$ is called $I$-starlike if $f(\mathbb D_I)$ is  a starlike  subset of $\mathbb H$.
\item
Let $I\in \mathbb S $, then   $f$ is called $I$-convex  if $f(\mathbb D_I)$ is a convex set.

\end{enumerate}
\end{defn}
 Let ${\rm St}_I(B(0;1))$ be
the set of $I$-starlike slice regular functions on $B(0;1)$.
\begin{prop}
Let $J\in\mathbb S$
be any fixed element. Then
$$ \bigcap_{I\in \mathbb S} St_{I} = \{f  \in St_J \ \mid \   \ \  tf(z) = f(w)  \ \textrm{ if and only  if }  \     tf(\bar z) = f(\bar w),  z,w\in \mathbb D_J \}=:A_J. $$
\end{prop}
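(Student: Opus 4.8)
The plan is to prove the set equality by double inclusion, using throughout the Representation Formula (Theorem \ref{1.3}(1)), which for the fixed $J$ and an arbitrary $I\in\mathbb S$ reads $f(x+Iy)=\frac12(1-IJ)f(x+Jy)+\frac12(1+IJ)f(x-Jy)$. For $z=x+Jy\in\mathbb D_J$ I will call $z$ and $\bar z=x-Jy$ the two $\mathbb C_J$-\emph{shadows} of the point $x+Iy\in\mathbb D_I$, and I will use repeatedly that the coordinate map $u+Jv\mapsto u+Iv$ is a norm-preserving bijection $\mathbb D_J\to\mathbb D_I$.

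The inclusion $A_J\subseteq\bigcap_{I}St_I$ is the transparent one. Fix $I\in\mathbb S$, $t\in[0,1]$ and $q=x+Iy\in\mathbb D_I$, and set $z=x+Jy$. Since $f\in St_J$, there is $w=u+Jv\in\mathbb D_J$ with $tf(z)=f(w)$; the defining condition of $A_J$ then yields $tf(\bar z)=f(\bar w)$. Applying the Representation Formula to $q$ and to $q'=u+Iv$ and using that $t$ is real, $tf(q)=\frac12(1-IJ)f(w)+\frac12(1+IJ)f(\bar w)=f(q')$, and $\|q'\|=\|w\|<1$, so $q'\in\mathbb D_I$. Hence $tf(\mathbb D_I)\subseteq f(\mathbb D_I)$, i.e. $f(\mathbb D_I)$ is starlike for every $I$. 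Note that this step uses only the implication $tf(z)=f(w)\Rightarrow tf(\bar z)=f(\bar w)$, which by the symmetry $z\leftrightarrow\bar z$, $w\leftrightarrow\bar w$ is equivalent to the full ``if and only if''.

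For the reverse inclusion, $f\in St_J$ is immediate by taking $I=J$, so the real content is to show that any $f\in\bigcap_I St_I$ satisfies the conjugation condition. Here I fix $z,w\in\mathbb D_J$ and $t\in[0,1]$ with $tf(z)=f(w)$ and must produce $tf(\bar z)=f(\bar w)$. For each $I\perp J$ the point $p_I=x+Iy$ satisfies $tf(p_I)=\frac12(1-IJ)f(w)+\frac12(1+IJ)\,tf(\bar z)$, while $I$-starlikeness provides a witness, namely a shadow $s_I\in\mathbb D_J$ with $tf(p_I)=\frac12(1-IJ)f(s_I)+\frac12(1+IJ)f(\bar s_I)$. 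Subtracting gives $\frac12(1-IJ)[f(w)-f(s_I)]+\frac12(1+IJ)[tf(\bar z)-f(\bar s_I)]=0$. Since for $I\perp J$ one has $(IJ)^2=-1$, both factors $1-IJ$ and $1+IJ$ are invertible (indeed $(1\pm IJ)^{-1}=\frac12(1\mp IJ)$), so this is a genuine constraint; the strategy is then to vary $I$ and exploit the swap-invariance of the ``graph'' $\{(f(z),f(\bar z)):z\in\mathbb D_J\}$ in order to force $f(s_I)=f(w)$, whence $tf(\bar z)=f(\bar w)$.

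The main obstacle is precisely the $I$-dependence and non-uniqueness of the witness $s_I$: because the representation map $(a,b)\mapsto\frac12(1-IJ)a+\frac12(1+IJ)b$ has an eight-real-dimensional domain and a four-dimensional target, knowing the value $tf(p_I)$ on a single slice does not pin down the pair $(f(z),f(\bar z))$, and so one cannot immediately identify $s_I$ with $w$. I expect to overcome this by combining the displayed identity for two distinct slices $I,I'$ (which, by invertibility of $(I-I')J$, recovers both shadows exactly as in the General Representation Formula, Theorem \ref{1.3}(2)) together with the fact that $J$-starlikeness already fixes $f(w)$; the conjugation symmetry of the shadow-pairs should then upgrade the single-slice constraint to the required equality $tf(\bar z)=f(\bar w)$. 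Checking that this elimination goes through without assuming $I$-univalence of $f$ is the delicate point of the argument.
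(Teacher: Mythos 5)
Your first inclusion, $A_J\subseteq\bigcap_{I\in\mathbb S}St_I$, is correct and is essentially the argument the paper itself gives for this proposition: apply the Representation Formula to transport the $J$-slice witness $w$ to the point $u+Iv$ on the $I$-slice, using that $t$ is real so that it passes through the coefficients $\frac12(1\pm IJ)$. No issues there.

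The reverse inclusion is where your proposal stops being a proof. You correctly observe that a single slice yields only the one quaternionic identity $\frac12(1-IJ)[f(w)-f(s_I)]+\frac12(1+IJ)[tf(\bar z)-f(\bar s_I)]=0$, i.e.\ four real conditions on the eight-real-dimensional pair $(f(s_I),f(\bar s_I))$, and you propose to close the gap by combining two slices $I\neq I'$ via the General Representation Formula. But that formula recovers the two shadow-values of a \emph{single} point from its values on two slices, whereas here the witnesses $s_I$ and $s_{I'}$ supplied by $I$- and $I'$-starlikeness are a priori \emph{different} points of $\mathbb D_J$: starlikeness of $f(\mathbb D_I)$ only asserts $tf(p_I)\in f(\mathbb D_I)$, with no compatibility of witnesses across slices. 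So the two equations involve four unknown values $f(s_I),f(\bar s_I),f(s_{I'}),f(\bar s_{I'})$ rather than two, and the elimination you describe does not close; you flag this yourself as ``the delicate point,'' and it is never carried out. As written, the inclusion $\bigcap_I St_I\subseteq A_J$ is therefore not established. For comparison, the paper disposes of exactly this direction with the single sentence that it ``follows from the definition of $A_J$'' and reserves its displayed computation for the inclusion you call transparent; so your diagnosis of where the substance lies differs from the paper's, but your proposal leaves precisely that substantive part open.
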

\begin{proof}
 The fact  that $ \bigcap_{I\in \mathbb S} St_{I} \subseteq  A_J $ follows from the definition of $A_J$. We prove that
 $A_J \subseteq  \bigcap_{I\in \mathbb S} St_{I}$.
If $  f\in  A_J$
 then
for any $q = x+I_q y\in B(0;1)$ the Representation Formula yields
$$ t f(q)  =  \frac{1}{2}\left[   (1+I_q I) tf(\bar z) +  (1- I_q I) tf(z)     \right]  $$
$$  = \frac{1}{2}\left[   (1+I_q I) f(\bar w) +  (1- I_q I) f(w)     \right] = f(r) ,$$
with $r=a+I_q b$.
\end{proof}
\begin{rem}
Let $f\in St_I  $ and  $J\in \mathbb S$ with $I\perp J$; let   $f_1,f_2 \in Hol(\mathbb D_I)$
be such that $f\mid_{D_I} =f_1+f_2J$. For any
$  z\in \mathbb D_I  $  and any $t\in[0,1]$, there exists  $  w\in \mathbb D_I  $ with
$$ tf(z) = f(w)$$
which implies
$$ tf_k(z) = f_k(w), $$
and so $f_1, f_2$ are starlike.
\end{rem}
\begin{thm}\label{5.8} Let  $f\in\mathcal{N}(B(0;1))$ be a function satisfying $f(0)=0$, $\partial_{s}(f)(0)=1$ and let $I\in\mathbb S$.
\begin{enumerate}
\item  Then, $f$ is $I$-convex  if and only if  $f$ is $J$-convex for any $J\in\mathbb S$.
\item Then, $f$ is $I$-starlike  if and only if $f$ is  starlike.
\end{enumerate}
\end{thm}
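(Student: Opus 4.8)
The plan is to exploit the fact that, for $f\in\mathcal N(B(0;1))$, the restriction of $f$ to each slice is, up to an isometric identification of $\mathbb C_I$ with $\mathbb C$, \emph{the same} holomorphic function of one complex variable. By Theorem \ref{1.3} (4)--(5) and the characterization of $\mathcal N$, I would write $f(x+Iy)=\alpha(x,y)+I\beta(x,y)$ with $\alpha,\beta$ real-valued and independent of $I$, satisfying the Cauchy--Riemann system together with $\alpha(x,-y)=\alpha(x,y)$ and $\beta(x,-y)=-\beta(x,y)$. Introduce the holomorphic function $F:\mathbb D\to\mathbb C$, $F(x+iy)=\alpha(x,y)+i\beta(x,y)$, which is independent of $I$ and satisfies $F(0)=0$ and $F'(0)=1$, and the $\mathbb R$-linear isometry $\phi_I:\mathbb C\to\mathbb C_I$, $\phi_I(x+iy)=x+Iy$, with $\phi_I(0)=0$. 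The basic identity is then $f\mid_{\mathbb D_I}=\phi_I\circ F\circ\phi_I^{-1}$, whence
\[f(\mathbb D_I)=\phi_I\bigl(F(\mathbb D)\bigr).\]
Since $\phi_I$ is a linear isometry fixing the origin, it carries convex sets to convex sets and sets starlike with respect to $0$ to sets starlike with respect to $0$; moreover convexity and starlikeness of a planar set in the ambient $\mathbb R^4$ coincide with the corresponding notions inside the $2$-plane $\mathbb C_I$, because every segment joining two points of $\mathbb C_I$ lies in $\mathbb C_I$.

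For part (1), the observation above gives that $f(\mathbb D_I)$ is convex if and only if $F(\mathbb D)$ is convex, a condition that makes no reference to $I$. Hence $f$ is $I$-convex for one $I$ if and only if $F(\mathbb D)$ is convex if and only if $f$ is $J$-convex for every $J\in\mathbb S$, which is exactly the assertion. The same reasoning shows that $I$-starlikeness is likewise an $I$-independent property, a fact I will use in part (2).

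For part (2), I first record the set-theoretic lemma that for intrinsic $f$ one has $\mathbb C_I\cap f(B(0;1))\subseteq f(\mathbb D_I)$: if $p=\alpha(x,y)+J\beta(x,y)\in\mathbb C_I$, then either $\beta(x,y)=0$ and $p=f(x+Iy)$ is real, or $J=\pm I$, and using $\beta(x,-y)=-\beta(x,y)$ one rewrites $p$ as $f(x\pm Iy)\in f(\mathbb D_I)$. Now suppose $f$ is starlike, i.e. $f(B(0;1))$ is starlike with respect to $0$. Given $w\in f(\mathbb D_I)\subseteq\mathbb C_I$ and $t\in[0,1]$, the point $tw$ lies in $\mathbb C_I$ (as $\mathbb C_I$ is a real subspace) and in $f(B(0;1))$ (by starlikeness), hence in $\mathbb C_I\cap f(B(0;1))\subseteq f(\mathbb D_I)$; thus $f(\mathbb D_I)$ is starlike and $f$ is $I$-starlike. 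Conversely, if $f$ is $I$-starlike then, by the $I$-independence noted above, $f(\mathbb D_J)$ is starlike for every $J$; given $w\in f(B(0;1))=\bigcup_{J\in\mathbb S} f(\mathbb D_J)$, choose $J$ with $w\in f(\mathbb D_J)$, so that $tw\in f(\mathbb D_J)\subseteq f(B(0;1))$ for all $t\in[0,1]$, and $f(B(0;1))$ is starlike.

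The step I expect to require the most care is the interplay between a single slice image and the full image. The reason starlikeness passes both ways, whereas convexity is only shown to be $I$-independent and is \emph{not} claimed to be equivalent to convexity of the entire image $f(B(0;1))$, is precisely that the segment $\{tw:t\in[0,1]\}$ from the origin to a point $w$ stays inside the single slice $\mathbb C_{I_w}$, so the inclusion $\mathbb C_I\cap f(B(0;1))\subseteq f(\mathbb D_I)$ applies; by contrast, a segment joining two points of $f(B(0;1))$ lying in different slices leaves every $\mathbb C_I$, and the axially symmetric image $f(B(0;1))$ need not be convex even when $F(\mathbb D)$ is. Checking that no exceptional behaviour occurs on the real axis (where $I_q$ is not unique) and that the parity relations for $\alpha$ and $\beta$ are invoked correctly is the main technical point.
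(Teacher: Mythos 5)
Your proof is correct, and it reaches the same conclusion by a somewhat different device than the paper. The paper's engine is the Representation Formula: for part (1) it computes $tf(q)+(1-t)f(v)$ for $q,v\in\mathbb{D}_J$ directly in terms of $tf(z)+(1-t)f(w)$ with $z,w\in\mathbb{D}_I$ and identifies the result as $f(s)$ with $s\in\mathbb{D}_J$, and for part (2) it does the analogous computation for $tf(q)$. You instead exploit the decomposition $f(x+Iy)=\alpha(x,y)+I\beta(x,y)$ with real $\alpha,\beta$ to build the planar model $F$ and the isometries $\phi_I$, so that every slice image $f(\mathbb{D}_I)=\phi_I\bigl(F(\mathbb{D})\bigr)$ is a linear isometric copy of one fixed planar set; the $I$-independence of convexity and of starlikeness of the slice images then falls out at once, with no computation. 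The two arguments encode the same underlying fact (an intrinsic function looks the same on every slice), but yours is more economical and more clearly separates the slice-to-slice transfer from the slice-to-ball transfer. A genuine added value of your write-up is the converse direction in (2): the paper dismisses ``starlike $\Rightarrow$ $I$-starlike'' as trivially true, whereas one really does need your lemma $\mathbb{C}_I\cap f(B(0;1))\subseteq f(\mathbb{D}_I)$ (proved via the parity relations $\alpha(x,-y)=\alpha(x,y)$, $\beta(x,-y)=-\beta(x,y)$) to exclude the possibility that the point $tw$ is attained only from another slice; you supply exactly the missing argument, and your closing remark correctly explains why the same mechanism does not upgrade $I$-convexity to convexity of the full image $f(B(0;1))$.
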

\begin{proof}Suppose that $f$ is $I$-convex. Then given  any $q,v\in D_J$
with $q=x+J y$,    $v= a+ J b$ and $x,y,a,b\in \mathbb R$, set $z =x+Iy$ and  $w=a+Ib$. Then from the Representation Formula in Theorem \ref{1.3} we have that
 $ t f(q) + (1-t) f(v)$ is equal to
$$  \frac{1}{2 } (1+J  I) t f(\bar z) +  \frac{1}{2 } (1- J  I) t f(  z)  +    \frac{1}{2 } (1+ J I) (1-t) f(\bar w  ) +  \frac{1}{2 } (1- J  I ) (1-t) f(  w  )    $$
 $$  =  \frac{1}{2 } (1+J  I) \left ( t f(\bar z)  + (1-t) f(\bar w  )  \right ) +  \frac{1}{2 } (1- J  I) \left ( t f(  z)  + (1-t) f(  w  )  \right )    $$
 $$  =  \frac{1}{2 } (1+J  I) \left (\overline {  t f( z)  + (1-t) f( w  )  } \right ) +  \frac{1}{2 } (1- J  I) \left ( t f(  z)  + (1-t) f(  w  )  \right )  $$
$$= \frac{1}{2 } (1+J  I)  \overline {f(u)}  +  \frac{1}{2 } (1- J  I) f(u)  = \frac{1}{2 } (1+J  I)  f(\bar u)  +  \frac{1}{2 } (1- J  I) f(u) =f(s) $$
where  $s=c+Jd$ if $u=c+Id \in \mathbb D_I$. We conclude that $f$ is $J$-convex. Exchanging the role of $I$ and $J$ we obtain the assertion.  \\
To show (2), we assume that $f$ is  $I$-starlike. Let $q\in B(0;1)$, with $q=x+I_qy$, $x,y\in \mathbb R$ and $I_q\in \mathbb S$. Then
 $$ t f(q) =  \frac{1}{2 } (1+J  I) t f(\bar z) +  \frac{1}{2 } (1- J  I) t f(  z)     = \frac{1}{2 } (1+J  I)  \overline {tf( z)}  +  \frac{1}{2 } (1- J  I) t f(z) $$
$$ = \frac{1}{2 } (1+J  I)  f(\bar u)  +  \frac{1}{2 } (1- J  I) f(u) =f(s) \in B(0;1), $$
where $s \in \mathbb D_{I_q}$, thus $f$ is starlike. The converse is trivially true.
\end{proof}

\noindent
From the previous result and Remark \ref{4.8} it follows that a function $\in\mathcal N (B(0;1))$ can be simply called slice-convex instead of $I$-convex.
\\
     Let $I\in\mathbb S$, denote $$ \mathcal P_I = \{ f\in \mathcal {R}(B(0;1) )  \  \mid \  {\rm Re}(f(z)) >0, \ z\in \mathbb D_I , \ f(0)=1\}.$$
 \begin{thm} Let $f\in \mathcal N(B(0;1))$ be such that  $f(0) = 0$ and  $\partial _sf(0) = 1$.
\begin{enumerate}
\item Then $f$ is a starlike function if and only if  the function $q \partial_s f(q) \ast f(q)^{-*}, \quad  q\in B(0;1)$ belongs to $\mathcal P_I$ for each  $I\in \mathbb S$.
\item Let $I\in \mathbb S$, then $f$ is $I$-convex function if and only if  the function  $  1 + q  \partial_s^2 f(q) \ast (\partial_s f(q) )^{-*} , \quad q\in B(0;1)$  belongs to $\mathcal P_I$.
\item Let $I\in \mathbb S$, then $f$ is a $I$-convex function if and only if   the function $q \ast \partial_s f(q) , \quad q\in B(0;1)$  is a $I$-starlike function.
\item If $f$ is a starlike function, then its coefficients    satisfy  $\|a_n\|\leq  n$ for
$n \in\mathbb N$.  Strict inequality holds for all $n$ unless $f $is a rotation of the Koebe
function.
\item Let $I\in \mathbb{S}$. If $f$ is a $I$-convex function, then $\|a_n\| \leq 1$  for  $ n = 2, 3,\dots $. Strict inequality holds for all
$n$ unless $f$ is a rotation of $q (1-q)^{-1} $.
\item Let $I\in \mathbb{S}$. If $f$ is a $I$-convex function then $B(0; 1/2)\cap \mathbb{C}_{I}\subset f[B(0;1)\cap \mathbb{C}_{I}]$.
\end{enumerate}
\end{thm}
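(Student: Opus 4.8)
The common mechanism is that, since $f\in\mathcal N(B(0;1))$, on every slice $\mathbb C_I$ the restriction $f_I:=f\mid_{\mathbb D_I}$ is an ordinary holomorphic function with \emph{real} Taylor coefficients $a_n$ (Theorem \ref{1.3} and the characterization of $\mathcal N$), and all $*$-operations collapse to pointwise complex operations there. I would first record this dictionary explicitly: $(\partial_s f)\mid_{\mathbb D_I}=f_I'$ and $(\partial_s^2 f)\mid_{\mathbb D_I}=f_I''$; on $\mathbb D_I$ the $*$-product is the ordinary product, since both factors take values in the commutative field $\mathbb C_I$; and $(f^{-*})\mid_{\mathbb D_I}=1/f_I$ at the points where $f_I\neq 0$. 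With this in hand, each item reduces to the slicewise transcription of a classical one-variable theorem from \cite{duren}, combined with the transfer results already available, namely Remark \ref{4.8} and Theorem \ref{5.8}.

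For (1) and (2) I would restrict the two candidate functions to $\mathbb C_I$ and use the dictionary to get
$$\bigl(q\,\partial_s f(q)*f(q)^{-*}\bigr)\mid_{\mathbb D_I}=\frac{z f_I'(z)}{f_I(z)},\qquad \bigl(1+q\,\partial_s^2 f(q)*(\partial_s f(q))^{-*}\bigr)\mid_{\mathbb D_I}=1+\frac{z f_I''(z)}{f_I'(z)}.$$
Because $f(0)=0$ and $\partial_s f(0)=1$, both right-hand sides take the value $1$ at $z=0$, so membership in $\mathcal P_I$ is exactly the requirement that the real part of the corresponding quotient be positive on $\mathbb D_I$. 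These are precisely the classical analytic criteria for starlikeness and for convexity of $f_I$, which by Remark \ref{4.8} are equivalent to $I$-starlikeness, respectively $I$-convexity, of $f$. For (1) I would then invoke Theorem \ref{5.8}(2) to identify $I$-starlikeness of an $\mathcal N$-function with genuine starlikeness, which accounts for the requirement that the condition hold for every $I$; item (2) is already phrased slicewise.

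For (3) the dictionary gives $(q*\partial_s f(q))\mid_{\mathbb D_I}=z f_I'(z)$, a normalized function vanishing only at the origin; Alexander's theorem in \cite{duren} asserts that $f_I$ is convex if and only if $z f_I'(z)$ is starlike, and translating both sides back through Remark \ref{4.8} gives the stated equivalence. For (4)--(6) the decisive simplification is that $a_n\in\mathbb R$, so $\|a_n\|=|a_n|$ and $f_I$ is a normalized univalent function with real coefficients that is starlike (for (4)) or convex (for (5) and (6)). The classical coefficient estimates $|a_n|\le n$ for starlike functions and $|a_n|\le 1$ for convex functions, their equality cases (a rotation of the Koebe function, respectively of $z/(1-z)$), and the one-half covering theorem for convex functions, all from \cite{duren}, then apply verbatim to $f_I$ and yield the three conclusions.

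The only genuine work lies in establishing the dictionary rigorously, in particular verifying that the $*$-inverse restricted to a slice is the pointwise reciprocal; this uses $f_I(\mathbb D_I)\subseteq\mathbb C_I$ together with the fact that a starlike (hence univalent) $f$ vanishes only at the origin, so the apparent pole of $f(q)^{-*}$ at $0$ is cancelled by the factor $q$ and the quotients above are genuinely regular. Once the dictionary is in place, every one of the six items is a direct quotation of the matching complex result; the remaining care is bookkeeping, ensuring that the normalizations and the equality/rotation characterizations survive the reality of the coefficients.
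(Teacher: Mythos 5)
Your proposal is correct and follows essentially the same route as the paper: restrict to each slice $\mathbb C_I$, where $f\in\mathcal N(B(0;1))$ is an ordinary holomorphic self-map of $\mathbb D_I$ with real coefficients and all $*$-operations reduce to pointwise ones, then quote the corresponding classical theorems from \cite{duren} and use Theorem \ref{5.8} (together with Remark \ref{4.8}) to pass between slicewise and global starlikeness/convexity. If anything, you are more careful than the paper in spelling out why the $*$-product and $*$-inverse collapse to pointwise operations on a slice, which the paper leaves implicit.
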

\begin{proof} Since $f\in {\mathcal{N}}(B(0;1))$, we have that $f:B(0;1)\cap \mathbb{C}_{I}\to \mathbb{C}_{I}$, is a holomorphic function on $B(0;1)\cap \mathbb C_I$  for every  $I\in \mathbb{S}$.

1. According to the above Theorem \ref{5.8}, point 2 and by Theorem 2.10 in \cite{duren} (applied on every slice $\mathbb{C}_{I}$), we get the desired conclusion.

2. It is immediate by Theorem 2.11 in \cite{duren} applied on the given slice $\mathbb{C}_{I}$.

3. It is immediate by Theorem 2.12 in \cite{duren} (Alexander's theorem) applied on the given slice $\mathbb{C}_{I}$.

4. It is immediate by the above Theorem \ref{5.8}, point 2 and by Theorem 2.14 in \cite{duren} applied on any fixed slice $\mathbb{C}_{I}$.

5. It immediate from the above Theorem \ref{5.8}, point 1, and the Corollary from the page 45 in \cite{duren} applied on the slice $\mathbb{C}_{I}$.

6. It is immediate from Theorem 2.15 in \cite{duren} applied on the slice $\mathbb{C}_{I}$.
\end{proof}

We can prove the following geometric characterizations of the slice-starlike and slice-convex functions.

\begin{thm}\label{4.9} Let $f\in\mathcal{N}(B(0;1))$  be a function satisfying $f(0)=0$ and $\partial_{s}(f)(0)=1$.
If $f$ is slice-starlike on $B(0;1)$ then $f$ is univalent in $B(0;1)$ and $f[B(0;1)]$ is a starlike set. Moreover, denoting
$B(0;r)=\{q\in \mathbb{H} ; \|q\|<r\}$, also $f[B(0;r)]$ is a starlike set for every $0<r<1$.
\end{thm}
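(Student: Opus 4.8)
The plan is to reduce the quaternionic statement to the classical complex theory of starlike functions, one slice at a time, and then to glue the slices together using the hypothesis $f\in\mathcal{N}(B(0;1))$. First I would invoke Remark \ref{4.8}: since $f\in\mathcal{N}(B(0;1))$, the restriction $f_I\colon \mathbb{D}_I\to\mathbb{C}_I$ is a genuine holomorphic function of the complex variable $z=x+Iy$, and the defining inequality ${\rm Re}[(f(q))^{-1}q\,\partial_s f(q)]>0$ is precisely the classical starlikeness condition ${\rm Re}[z f_I'(z)/f_I(z)]>0$ on $\mathbb{D}_I$. Together with the normalization $f(0)=0$, $\partial_s f(0)=1$, the classical theory (e.g.\ \cite{duren}) then yields, for every $I\in\mathbb{S}$, that $f_I$ is univalent on $\mathbb{D}_I$ and that $f_I(\mathbb{D}_I)$ is a starlike subset of $\mathbb{C}_I$ with respect to the origin.

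Univalence of $f$ on the whole ball is then immediate: each $f_I$ is $I$-univalent, so by Remark \ref{rem001}, point 2 (which applies exactly because $f\in\mathcal{N}$), $I$-univalence on a single slice already forces $f$ to be univalent on all of $B(0;1)$.

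For the starlikeness of $f[B(0;1)]$ I would argue pointwise. Fix $q\in B(0;1)$ and $t\in[0,1]$, and write $q=x+I_q y$. Because $f\in\mathcal{N}$, the value $p:=f(q)=f_{I_q}(q)$ lies in the plane $\mathbb{C}_{I_q}$. Since $f_{I_q}(\mathbb{D}_{I_q})$ is starlike with respect to the origin inside $\mathbb{C}_{I_q}$ and $p$ belongs to it, the entire Euclidean segment $[0,p]$ stays in $f_{I_q}(\mathbb{D}_{I_q})$; in particular $t\,f(q)=tp\in f_{I_q}(\mathbb{D}_{I_q})\subseteq f[B(0;1)]$. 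As $q$ and $t$ are arbitrary, $t\,f[B(0;1)]\subseteq f[B(0;1)]$, i.e.\ the image is starlike. For the smaller balls $B(0;r)$ with $0<r<1$, the inequality ${\rm Re}[z f_I'(z)/f_I(z)]>0$ persists on each concentric subdisk $B(0;r)\cap\mathbb{C}_I$, so by the classical fact that a starlike function carries every concentric subdisk onto a starlike domain, $f_I(B(0;r)\cap\mathbb{C}_I)$ is again starlike with respect to the origin; repeating the segment argument now for $q\in B(0;r)$ shows that $f[B(0;r)]$ is starlike as well.

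The main obstacle, and in fact the only delicate point, is ensuring that starlikeness passes from the individual complex slices to the full quaternionic image. This is exactly where $f\in\mathcal{N}$ is indispensable: it guarantees $f(q)\in\mathbb{C}_{I_q}$, so that the real dilate $t\,f(q)$ remains in the same plane $\mathbb{C}_{I_q}$ and the slicewise starlikeness of $f_{I_q}(\mathbb{D}_{I_q})$ can be applied without ever leaving that slice. Were the $\mathcal{N}$ hypothesis dropped, $f(q)$ and $t\,f(q)$ could fall in different slices and the gluing of the slice images into a single starlike set in $\mathbb{R}^4\cong\mathbb{H}$ would break down.
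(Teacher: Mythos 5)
Your proof is correct and follows essentially the same route as the paper's: reduce to the classical complex theory of starlike functions on each slice $\mathbb{D}_I$, and use the intrinsic property $f(\mathbb{D}_I)\subseteq\mathbb{C}_I$ (so that the real dilate $t\,f(q)$ stays in the slice image $f(\mathbb{D}_{I_q})$) to glue the slice images into a starlike subset of $\mathbb{H}$, for the unit ball and for each sub-ball $B(0;r)$. The only cosmetic difference is that you obtain univalence by invoking Remark \ref{rem001}, point 2, whereas the paper reproves that implication in situ via an explicit four-case analysis of pairs of points lying on equal or distinct slices; the underlying argument is the same.
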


\begin{proof} Firstly, we will prove that $f$ is univalent in $B(0;1)$. In this sense, we have four cases.

Case 1). For a fixed $I\in \mathbb{S}$, let $q_{1}, q_{2}\in B(0;1)\bigcap \mathbb{C}_{I}$ be with $q_{1}\not = q_{2}$.
By the Remark \ref{4.8} and by the starlikeness of $f$, it follows (see e.g. \cite{Moc}, p. 45-46 or \cite{Rob}) the univalence of $f$ on $B(0;1)\bigcap \mathbb{C}_{I}$, that is $f(q_{1})\not=f(q_{2})$.

Case 2). Let $q_{1}=a_{1}\in \mathbb{R}, q_{2}=a_{2}\in \mathbb{R}$ be with $q_{1}\not=q_{2}$. Choosing an arbitrary $I\in \mathbb{S}$ we can write
$q_{1}=a_{1}+ I \cdot 0$, $q_{2}=a_{2}+ I \cdot 0$ and we are in Case 1), obtaining thus $f(q_{1})\not=f(q_{2})$.

Case 3). Let $q_{1}\in \mathbb{C}_{I}$, $q_{2}\in \mathbb{C}_{-I}$ be with $q_{1}\not =q_{2}$. But it is easy to see that $\mathbb{C}_{I}=\mathbb{C}_{-I}$, which means that we are in the Case 1) and we obtain $f(q_{1})\not=f(q_{2})$.

Case 4). Let $q_{1}\in B(0;1)\bigcap \mathbb{C}_{J}$ and $q_{2}\in B(0;1)\bigcap \mathbb{C}_{I}$ be with $q_{1}\not=q_{2}$, where
$I, J\in \mathbb{S}$, $J\not=I$ and $J\not=-I$. Since by Theorem \ref{1.3}, (ii) we have $f(q_{1})=a + J b$ and
$f(q_{2})=c+ I d$, if we would have $f(q_{1})=f(q_{2})$, then we would necessarily get two subcases : (i) $a=c$ and $b=d=0$ or
(ii) $a=c$ and $I=J$, $b=c\not=0$.

Subcase $(4_{i})$. It follows that $q_{1}, q_{2}\in \mathbb{R}$, which according to the Case 2) would imply $f(q_{1})\not=f(q_{2})$,
a contradiction.

Subcase $(4_{ii})$. This is impossible because it implies $I=J$, a contradiction.

Collecting the above results, it follows the univalence of $f$ in $B(0;1)$.

Now we prove that $f(B(0;1))$ is starlike. Writing $\mathbb{D}_{I}=B(0;1)\bigcap \mathbb{C}_{I}$,
we have $B(0;1)=\bigcup_{I\in \mathbb{S}}\mathbb{D}_{I}$,
$f(B(0;1))=f(\bigcup_{I\in \mathbb{S}}\mathbb{D}_{I})=\bigcup_{I\in \mathbb{S}}f(\mathbb{D}_{I})$,
which implies for all $t\in [0, 1]$,
$$t f(B(0;1)) =\bigcup_{I\in \mathbb{S}}t f(\mathbb{D}_{I})\subset \bigcup_{I\in \mathbb{S}} f(\mathbb{D}_{I})=f(B(0;1)).$$
We used here the fact that the starlikeness of $f:\, \mathbb{D}_{I}\to \mathbb{C}_{I}$ as function of $z=x+ I y\in \mathbb{D}_{I}$
implies that the set $f(\mathbb{D}_{I})$ is starlike (see e.g. \cite{Moc}, p. 44-45).

Similarly, denoting $\mathbb{D}_{I}(0; r)=B(0; r)\bigcap \mathbb{C}_{I}$ and taking into account that the starlikeness of
$f:\, \mathbb{D}_{I}(0;1)\to \mathbb{C}_{I}$ as function of $z = x + I y\in \mathbb{D}_{I}$ also implies (see again e.g. \cite{Moc}, p. 44-45)
that $f(\mathbb{D}_{I})$ is starlike, reasoning as above we immediately obtain that $f(B(0; r))$ is starlike, for every $r\in (0, 1)$.
\end{proof}

\begin{rem}
The geometric properties in Theorem \ref{4.9} are similar to those obtained in the case of a complex variable. Also, note that from the proof of
Theorem \ref{4.9} it easily follows that if $f\in\mathcal{N}(B(0;1))$ is a function satisfying $f(0)=0$, $\partial_{s}(f)(0)=1$, such that
$f(z)$ with $z\in \mathbb{C}$ is univalent in the open unit disk in $\mathbb{C}$, then as function of $q$, $f$ is univalent in $B(0;1)$.
\end{rem}

\begin{thm}\label{4.15}
Let $f\in\mathcal{N}(B(0;1))$ be a function satisfying $f(0)=0$ and $\partial_{s}(f)(0)=1$.
If $f$ is slice-convex on $B(0;1)$ then $f$ is univalent in $B(0;1)$ and $f[B(0;1)]$ is a convex set. Moreover, denoting
$B(0;r)=\{q\in \mathbb{H} ; \|q\|<r\}$, also $f[B(0;r)]$ is a convex set for every $0<r<1$.
\end{thm}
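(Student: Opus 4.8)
The plan is to follow the two-step scheme of Theorem~\ref{4.9}: first show that $f$ is univalent on $B(0;1)$, and then prove that $f[B(0;1)]$, and more generally each $f[B(0;r)]$, is convex.

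For univalence I would argue exactly as in Theorem~\ref{4.9}. Since $f\in\mathcal N(B(0;1))$ is slice-convex, Remark~\ref{4.8} shows that for every $I\in\mathbb S$ the restriction $f_I\colon\mathbb D_I\to\mathbb C_I$ is a convex holomorphic function of $z=x+Iy$; a convex function is in particular univalent, so $f$ is $I$-univalent for every $I$. With $I$-univalence in hand on every slice, the four-case analysis in the proof of Theorem~\ref{4.9} --- which uses the General Representation Formula of Theorem~\ref{1.3} to compare values arising from two distinct slices --- applies verbatim and gives injectivity of $f$ on all of $B(0;1)$.

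The convexity of the image is the genuinely harder point, and here the simple union-over-slices argument used for starlikeness in Theorem~\ref{4.9} breaks down: convexity is a two-point condition, and two image points may lie in different slices $\mathbb C_I$, so the Euclidean segment joining them need not be contained in any single slice. The idea I would use instead is to exploit the axial symmetry forced by $f\in\mathcal N$. Writing $f(x+Iy)=\alpha(x,y)+I\beta(x,y)$ with $\alpha,\beta$ real (Theorem~\ref{1.3}), the orbit of any value $f(q)$ under rotation of its imaginary part again lies in $f[B(0;1)]$, so the image is symmetric about the real axis; moreover, by slice-convexity together with $\alpha(x,-y)=\alpha(x,y)$ and $\beta(x,-y)=-\beta(x,y)$, each slice image $f[\mathbb D_I]=f[B(0;1)]\cap\mathbb C_I$ is a convex subset of $\mathbb C_I$ symmetric about the real axis.

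Given two image points $p_1=f(q_1)$ and $p_2=f(q_2)$, I would decompose each as $p_k={\rm Re}(p_k)+\vec p_k$ and examine the segment point $p=(1-t)p_1+tp_2$ for $t\in[0,1]$. Its real part is the convex combination $(1-t){\rm Re}(p_1)+t\,{\rm Re}(p_2)$, while the triangle inequality gives $\|(1-t)\vec p_1+t\vec p_2\|\le(1-t)\|\vec p_1\|+t\|\vec p_2\|$. Convexity of the two-dimensional profile (the intersection of a slice image with the half-plane $\{y\ge 0\}$) places the point having this real part and imaginary radius $(1-t)\|\vec p_1\|+t\|\vec p_2\|$ in some slice image; and since a convex slice symmetric about the real axis contains every point with the same real part and smaller imaginary radius, the point $p$ itself belongs to $f[B(0;1)]$. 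This yields convexity, and repeating the argument on $B(0;r)$ gives convexity of $f[B(0;r)]$ for each $r\in(0,1)$. The main obstacle is precisely this last reduction --- passing from convexity on each individual slice to convexity of the image as a subset of $\mathbb R^4$ by means of axial symmetry and the triangle inequality --- whereas the univalence is routine once Theorem~\ref{4.9} is available.
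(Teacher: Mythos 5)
Your proposal is correct and follows essentially the same route as the paper: univalence via the slice-by-slice convexity-implies-univalence argument plus the case analysis of Theorem~\ref{4.9}, and convexity of the image by reducing a general segment point to the common two-dimensional profile using axial symmetry and the bound $\|(1-t)\vec p_1+t\vec p_2\|\le(1-t)\|\vec p_1\|+t\|\vec p_2\|$. The paper merely makes this last estimate explicit (writing $L=\alpha_1 I+\alpha_2 J+\alpha_3 K$ and computing the norm of the vector part) and phrases the final step as membership in the segment joining $tf(q_1')+(1-t)f(q_2')$ to its conjugate inside the convex slice image $f(\mathbb D_{\tilde I})$, which is exactly your ``same real part, smaller imaginary radius'' observation.
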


\begin{proof} The univalence of $f$ in $B(0;1)$ follows exactly as in the proof of Theorem \ref{4.9}, by taking into account that the convexity of a function of complex variable implies its univalence (see e.g. \cite{Moc}, p. 51 or \cite{Rob}).

We now prove the convexity of $f[B(0;1)]$ and of $f[B(0; r)]$ for every $0<r<1$.
First recall that  quaternionic intrinsic functions take any slice to itself, i.e. $f (\mathbb{D}_{I})\subset \mathbb{C}_I$.  Then
we use the relation obtained in the proof of Theorem \ref{4.9},
$f[B(0;1)]=\bigcup_{I\in \mathbb{S}}f(\mathbb{D}_{I})$, where every $f(\mathbb{D}_{I})$ is convex from the hypothesis and from
\cite{Moc}, p. 49-50 (that is $t f(\mathbb{D}_{I})+(1-t)f(\mathbb{D}_{I}) \subset f(\mathbb{D}_{I})$, for all $t\in [0, 1]$).
Moreover for any  quaternion $x+Iy\in B(0,1)$, we have that  if $f(x+Iy)=u+Iv\in f[B(0,1)]$ then also $u+\tilde Iv\in f[B(0,1)]$ for any other $\tilde I\in\mathbb S$, in fact $u+\tilde I v=f(x+\tilde I v)$ and $x+\tilde I y$ obviously belongs to $B(0,1)$.\\
We have to show that for any two points $f(q_1), f(q_2) \in f[B(0;1)]$ we have $tf(q_1)+(1-t)f(q_2) \in f[B(0;1)]$ for any $t\in [0,1]$.
We take $q_1=x_1+Iy_1$, $q_2=x_2+L y_2$ and $f(q_1)=u_1+Iv_1$, $f(q_2)=u_2+ L v_2$. Take $J\in\mathbb S$ such that $I, J, IJ=K$ is a basis of $\mathbb H$ and rewrite $L=\alpha_1 I +\alpha_2 J+\alpha_3 K$, with $\sum_{i=1}^3\alpha_i^2=1$ and
\[
\begin{split}
tf(q_1)+(1-t)f(q_2)&=t(u_1+Iv_1)+(1-t) (u_2+(\alpha_1 I +\alpha_2 J+\alpha_3 K)v_2)\\
&=(tu_1+(1-t)u_2)+\tilde I v(t,v_1,v_2,\alpha_1)
\end{split}
\]
where we set
$$
w=(tv_1 +(1-t)\alpha_1v_2) I +(1-t)(\alpha_2v_2 J+ \alpha_3 v_2 K), \qquad
\tilde I:=\frac{w}{\|w\|}
$$
and
\[
\begin{split}
v(t,v_1,v_2,\alpha_1)&=\|(tv_1 +(1-t)\alpha_1v_2) I +(1-t)(\alpha_2v_2 J+ \alpha_3 v_2 K)\|\\
&=(t^2v_1^2+(1-t)^2v_2^2+2t(1-t)\alpha_1 v_1v_2)^{1/2}.
\end{split}
\]
To show that $(tu_1+(1-t)u_2)+\tilde I v(t,v_1,v_2,\alpha_1)$ belongs to $f(B(0;1))$  we first observe that $|v(t,v_1,v_2,\alpha_1)|\leq | tv_1+(1-t)v_2|$.
The segment joining any point of the form $tf(q'_1)+(1-t)f(q'_2)$, for any $t\in[0,1]$ fixed,  with its conjugate $\overline{t f(q'_1)+(1-t)f(q'_2)}=tf(\bar q'_1)+(1-t)f(\bar q'_2)$, where $q'_i=x_i+\tilde Iy_i$, $i=1,2$ belongs to $f(\mathbb D_{\tilde I})$, and so to  $f(B(0;1))$, by its convexity and so also the point $(tu_1+(1-t)u_2)+\tilde I v(t,v_1,v_2,\alpha_1)$ belongs to $f(\mathbb D_{\tilde I})$ since it belongs to that segment.

Indeed, denoting $t f(q'_1)+(1-t)f(q'_2)=A + \tilde{I} B$, we have $$\overline{t f(q'_1)+(1-t)f(q'_2)}=A - \tilde{I} B$$ and
the segment $S=[t f(q'_1)+(1-t)f(q'_2), \overline{t f(q'_1)+(1-t)f(q'_2)}]$ can be written as the set
\begin{equation}\label{(1.1)}
\begin{split}
S&=\{\lambda (A+\tilde{I} B)+(1-\lambda)(A-\tilde{I} B) ; \lambda \in [0, 1]\}\\
&=\{A+\tilde{I}[\lambda B +(1-\lambda)(-B)] ; \lambda \in [0, 1]\}.
\end{split}
\end{equation}
On the other hand, denoting $f(q^{\prime}_{1})=u_{1}+\tilde{I} v_{1}$ and $f(q^{\prime}_{2})=u_{2}+\tilde{I} v_{2}$ (since $f$ is intrinsic function), for fixed $t\in [0, 1]$ we get
$$
A+\tilde{I}  B=t u_{1}+(1-t)u_{2}+\tilde{I}(t v_{1}+ (1-t) v_{2}),
 $$
 $$
 A-\tilde{I} B = t u_{1}+(1-t)u_{2}-\tilde{I}(t v_{1}+ (1-t) v_{2})$$
and by \eqref{(1.1)} the segment $S$ can be written as
$$S=\{t u_{1}+ (1-t)u_{2} + \tilde{I} [\lambda(t v_{1}+(1-t)v_{2})+(1-\lambda)(-tv_{1}-(1-t)v_{2})] ; \, \lambda \in [0, 1]\}.$$
Now, since $|v(t,v_1,v_2,\alpha_1)|\leq | tv_1+(1-t)v_2|$, it follows that there exists a $\lambda\in [0, 1]$ such that
$$v(t,v_1,v_2,\alpha_1)=\lambda(t v_{1}+(1-t)v_{2})+(1-\lambda)(-tv_{1}-(1-t)v_{2}),$$
which implies that $(tu_1+(1-t)u_2)+\tilde I v(t,v_1,v_2,\alpha_1)$ belongs to the segment $S$.
\\
The convexity of $f[B(0,r)]$, $0<r<1$ can be proven similarly.
\end{proof}

\begin{rem}
The geometric properties in Theorem \ref{4.15} are similar to those obtained in the case of a complex variable.
\end{rem}

\begin{rem}
As consequences/applications of Theorem \ref{4.9} and \ref{4.15},
we can easily
generate many examples of transformations from $\mathbb{R}^{4}$ to $\mathbb{R}^{4}$, with nice
geometric properties, because any $f:B(0,1)\to \mathbb{H}$ can be written in the form \eqref{geo}
and therefore $f$ can be also be viewed as
the transformation
$$(x_1, x_2, x_3, x_4) \to$$
$$ (P_{1}(x_1, x_2, x_3, x_4), P_{2}(x_1, x_2, x_3, x_4), P_{3}(x_1, x_2, x_3, x_4),
P_{4}(x_1, x_2, x_3, x_4)),$$
with all $P_{k}(x_1, x_2, x_3, x_4)$, $k=1, 2, 3, 4$, real-valued.
Thus, if in the expression of an analytic function  of complex variable
with the coefficients in the series expansions all real, which is a
starlike (or convex) function, we replace $z$-complex, by $q$-quaternion, we
get in fact a (infinite differentiable) mapping from $\mathbb{R}^4$ to $\mathbb{R}^4$, which
transforms the Euclidean open unit ball and all its open sub-balls (that is with the same center at origin) into
starlike (respectively convex) bodies in $\mathbb{R}^4$.
\end{rem}
Just to get a flavor, we present here a few concrete examples.
\begin{ex}
\begin{enumerate}
\item Polynomials with real coefficients, like :

a) $f(q)=q[1-q/(n+1)]^{n-1}$, $q$-quaternion, $n\in \mathbb{N}$ arbitrary, is univalent and transforms
the Euclidean open unit ball in $\mathbb{R}^4$ and all its open sub-balls, into
starlike bodies in $\mathbb{R}^4$ (because $f(z), z\in \mathbb{C}$ is starlike, see e.g. \cite{Glu-Har}) ;

b) $f(q)=q+a_{2}q^{2}/(2^{2})+...+a_{n}q^{n}/(n^{2})$, $n\in \mathbb{N}$ arbitrary,
$q$-quaternion, where all $a_{k}$ are real and satisfy the inequality
$1\ge \sum_{k=2}^{n} |a_{k}|$, is univalent and transforms the Euclidean open unit ball in $\mathbb{R}^4$ and all its open
sub-balls, into convex bodies in $\mathbb{R}^4$ (because $f(z), z\in \mathbb{C}$ is convex, see e.g. \cite{Alex});

\item If we define the elementary functions of quaternion variable,
$\exp$, $\sin$, $\cos$ and $\log$ simply replacing in the
expressions of their series expansions in $\mathbb{C}$, $z\in \mathbb{C}$ by $q$-quaternion, then we get non-polynomial
transformations with nice geometric properties, like, for example, the following :

a) $f(q)=(1-\lambda)q +\lambda \sin(q)$, $q$-quaternion, is univalent and
transform the Euclidean ball in $\mathbb{R}^4$ and all its open sub-balls, into
convex bodies in $\mathbb{R}^4$, if $\lambda\in \mathbb{R}$ and satisfies the inequality
$|\lambda|\le  4e/[3(e^{2}-1)]$ (because $f(z), z\in \mathbb{C}$ is convex, see e.g. \cite{Moc2}) ;

b) $f(q)=q+\lambda[\exp(q)-1-q-q^{2}/2]$, is univalent and transforms the Euclidean ball in $\mathbb{R}^4$
and all its open sub-balls, into convex bodies in $\mathbb{R}^4$, if $\lambda\in \mathbb{R}$
and satisfies the inequality $|\lambda|\le  2/[3(e-1)]$ (because $f(z), z\in \mathbb{C}$ is starlike, see e.g. \cite{ Moc2}),
and into starlike bodies in $\mathbb{R}^4$, if $\lambda\in \mathbb{R}$ and satisfies the
inequality $|\lambda|\le  3/[(e-2)\sqrt{10}]$ (because $f(z), z\in \mathbb{C}$ is convex in this case, see e.g. \cite{Moc2}).

\item The Koebe function $K(q)$, $q\in \mathbb{H}$, is univalent and
transforms the Euclidean open unit ball in $\mathbb{R}^4$ and all its open sub-balls, into
starlike bodies in $\mathbb{R}^4$. This is because $f(z)=z\cdot \frac{1}{(1-z)^{2}}=z+2z^{2}+3z^{3}+...$, with $|z|<1$, is an analytic starlike function.
\end{enumerate}
\end{ex}

\begin{rem}\label{4.10}
It is worth noting that the above examples of univalent, starlike and convex functions of quaternion variable, are strongly contrasting with
what happens in the case of geometric function theory for functions of several complex variables, when it is surprisingly difficult
to construct starlike or convex mappings for analytic functions of several complex variables defined in the Euclidean open unit
ball, by starting from starlike and convex functions of one complex variable (see e.g. \cite{Gra-Koh}, Subsection 6.3.3 and
Problems 6.3.2, 6.3.3,(i)).
\end{rem}

As in the case of complex variable, we have the following result.

\begin{prop}\label{4.11}
A normalized function
$f\in \mathcal{R}(B(0;1))$ is slice-convex if and only if the function $h(q)=q\cdot \partial_{s}(f)(q)$ is slice-starlike.
\end{prop}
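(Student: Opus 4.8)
The plan is to reduce the equivalence to a single algebraic identity on each slice $\mathbb C_I$, working directly from the defining inequalities in Definition \ref{4.6} (as reinterpreted in Remark \ref{4.8}). First I would record that $h(q)=q\cdot\partial_s f(q)$ is again slice regular and normalized. Writing $f(q)=\sum_{n\ge 0}q^n a_n$ with $a_0=0$ and $a_1=1$, one has $\partial_s f(q)=\sum_{n\ge 1} nq^{n-1}a_n$, so the pointwise product collapses to the power series $h(q)=\sum_{n\ge 1} n q^n a_n$. In particular $h(q)=q*\partial_s f(q)$ (since $q\in\mathcal N$ is quaternionic intrinsic), $h(0)=0$, and $\partial_s h(0)=a_1=1$, so that the slice-starlikeness of $h$ is meaningful in the sense of Definition \ref{4.6}.

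Next I would compute the slice derivative of $h$ by differentiating the series term by term, obtaining $\partial_s h(q)=\sum_{n\ge1}n^2q^{n-1}a_n$, and check that this equals $\partial_s f(q)+q\,\partial_s^2 f(q)$ (equivalently, this is the Leibniz rule for the $*$-product $q*\partial_s f$, legitimate because $q\in\mathcal N$). Then comes the heart of the argument. Fix $I\in\mathbb S$ and restrict to $z=x+Iy\in\mathbb D_I$, abbreviating $f'=\partial_s f$ and $f''=\partial_s^2 f$. Here $h(z)=z f'(z)$ and $\partial_s h(z)=f'(z)+z f''(z)$, where $z\in\mathbb C_I$ multiplies the (a priori $\mathbb H$-valued) derivatives on the left. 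Using $(zf'(z))^{-1}=f'(z)^{-1}z^{-1}$ together with the commutativity of $z$ and $z^{-1}$, I would compute
\[
(h(z))^{-1}\cdot z\cdot\partial_s h(z)=f'(z)^{-1}z^{-1}z\bigl(f'(z)+zf''(z)\bigr)=1+f'(z)^{-1}\,z\,f''(z).
\]

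Taking real parts yields ${\rm Re}\bigl[(h(z))^{-1} z\,\partial_s h(z)\bigr]=1+{\rm Re}\bigl[(\partial_s f(z))^{-1}z\,\partial_s^2 f(z)\bigr]$, which is precisely the quantity whose positivity defines slice-convexity of $f$ in Definition \ref{4.6}(2). Thus the slice-starlikeness condition for $h$ and the slice-convexity condition for $f$ are the assertion that one and the same real-valued expression is positive; since this holds for every $I\in\mathbb S$, the function $h$ is slice-starlike if and only if $f$ is slice-convex.

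The step I expect to require the most care is the non-commutative bookkeeping in the displayed identity: one must keep $z$, $f'(z)^{-1}$ and $f''(z)$ in the correct order (in particular $z^{-1}z=1$ is used, but $z$ does \emph{not} commute with $f''(z)$), and one must justify that the pointwise expression $q\,\partial_s f(q)$ genuinely coincides with the slice regular series $\sum_{n\ge1}nq^na_n$ so that its restriction to $\mathbb C_I$ is the product $z f'(z)$ as claimed. A minor point to address is the behaviour at $q=0$, where $(h(0))^{-1}$ is undefined; as in the complex case this is handled by continuity, using the normalizations $h(q)\sim q$ and $\partial_s h(q)\sim 1$ near the origin.
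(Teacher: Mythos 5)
Your proposal is correct and follows essentially the same route as the paper: both proofs reduce to the single algebraic identity $(h(q))^{-1}\cdot q\cdot \partial_{s}(h)(q)=1+(\partial_{s}f(q))^{-1}\cdot q\cdot \partial_{s}^{2}(f)(q)$, obtained from $\partial_s h=\partial_s f+q\,\partial_s^2 f$ and $(q\cdot\partial_s f(q))^{-1}=(\partial_s f(q))^{-1}q^{-1}$, and then compare the two defining inequalities of Definition \ref{4.6}. Your additional remarks (verifying the normalization of $h$ via the series expansion, and the continuity argument at $q=0$) are sound supplements that the paper leaves implicit.
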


\begin{proof}
This easily follows from the fact that in Definition \ref{4.6}, the inequality in the point 2 for $f$ is equivalent to the inequality in
the point 1 for $h(q)=q\cdot \partial_{s}(f)(q)$. Indeed, we can write
$$(h(q))^{-1}\cdot q\cdot \partial_{s}(h)(q)=(h(q))^{-1}\cdot q\cdot [\partial_{s}(f)(q)+q\cdot \partial_{s}^{2}(f)(q)]$$
$$=(\partial_{s}(f)(q))^{-1}\cdot q^{-1}\cdot q \cdot [\partial_{s}(f)(q)+q\cdot \partial_{s}^{2}(f)(q)]=1+(\partial_{s}(f)(q))^{-1}\cdot q \cdot \partial_{s}^{2}(f)(q),$$
which proves the assertion.
\end{proof}

\begin{rem} Let $f(q)=q+\sum_{k=2}^{\infty}q^{k} a_{k}$ (with $a_{k} \in \mathbb{H}$, $k=0, 1, ..., $) be slice regular and satisfying $f(0)=0$ and $\partial_{s}(f)(0)=1$. Consider the Alexander-kind operator $A(f)(q)=\int_{0}^{q} t^{-1}\cdot f(t) d t$ and the Libera operator
$L(f)(q)=2q^{-1}\int_{0}^{q}f(t)dt$, defined for $q\in B(0 ; 1)$.

Firstly we get $A(f)(q)=q+\sum_{k=2}^{\infty}\frac{1}{k^{2}}q^{k} a_{k}$. Since $q \cdot \partial_{s}[A(f)(q)]=f(q)$ for all $q\in B(0;1)$, by
Lemma \ref{4.11} it follows that $f$ is slice-starlike if and only if the Alexander integral operator $A(f)$ is slice-convex on $B(0;1)$. In other words,
the Alexander integral operator transforms a slice starlike function into a slice convex function on $B(0;1)$

Secondly, we get $L(f)(q)=2 q^{-1}\int_{0}^{q}f(t) dt = q + 2\sum_{k=2}^{\infty}\frac{q^{k}}{k+1}\cdot a_{k}$.
Now, suppose that $f$ is slice-starlike in $B(0;1)$ and that $f\in\mathcal{N}(B(0;1))$. It follows that the coefficients $a_{k}\in \mathbb{R}$,
for all $k\ge 2$ and then since $f(z)$ in starlike on the open unit disk of the complex plane, by the result in \cite{Lib} we get that $L(f)(z)$ is starlike in the open unit disk of the complex
plane. Taking into account Theorem \ref{4.9}, it follows that $L(f)(q)$ is slice starlike in the ball $B(0;1)$.
\end{rem}

\section{Spirallike Slice Regular Functions}

In what follows, we deal with spirallike functions of quaternion variable. In this sense, firstly we introduce the following
well-known concept.
\begin{defn} (see e.g. \cite{Gra-Koh}, Remark 6.4.11) If $X$ is a linear space over $\mathbb{C}$, then $A\subset X$ is called spirallike
of type $\gamma\in \left (-\frac{\pi}{2}, \frac{\pi}{2}\right )$, if for all $w\in A$ and all $t\ge 0$ we have
$e^{-t \lambda}w \in A$, where $\lambda =e^{-i \gamma}$.
\end{defn}
\begin{rem}
The curve in $X$ of equation $s(t)=e^{-t \lambda}w_{0}$, with $\lambda=e^{-i \gamma}$ and $w_{0}\in X$ fixed, is called
$\gamma$-spiral in $X$ that joins $w_{0}$ with the origin. Therefore, $A\subset X$ is $\gamma$-spirallike if for any point $w_{0}\in A$,
the $\gamma$-spiral that joins $w_{0}$ and the origin, entirely belongs to $A$. When $X=\mathbb{C}$ then $s(t)$, $t\ge 0$
becomes the well-known logarithmic spiral in the plane that joins $w_{0}$ with the origin.
\end{rem}
\begin{rem}
Since $\mathbb{H}$ obviously is a linear space over $\mathbb{C}$, the parametric equations of the $\gamma$-spiral in $\mathbb{H}$,
$s(t)=e^{-t \lambda}q_{0}$, where $\lambda=e^{-i \gamma}$ and $q_{0}=x_{0}+iy_{0}+jz_{0}+ku_{0}\in \mathbb{H}$, can easily derived by simple calculation as
$$x(t)=A(t)\{\cos[B(t)]x_{0}-\sin[B(t)]y_{0}\},\, y(t)=A(t)\{\cos[B(t)]y_{0}+\sin[B(t)]x_{0}\},$$
$$x(t)=A(t)\{\cos[B(t)]z_{0}-\sin[B(t)]u_{0}\},\, y(t)=A(t)\{\cos[B(t)]u_{0}+\sin[B(t)]z_{0}\},$$
$$t\ge 0,$$
where $A(t)=e^{-t \cos(\gamma)}$ and $B(t)=t\sin(\gamma)$.
\end{rem}
Now, we are in position to introduce the concept of spirallike function.

\begin{defn}\label{5.1}
Let $f\in\mathcal{R}(B(0;1))$ be a function satisfying $f(0)=0$ and $\partial_{s}(f)(0)=1$.
Also, let $\gamma\in \left (\frac{-\pi}{2}, \frac{\pi}{2}\right )$.

Then, $f$ is called slice-spirallike of $\gamma$-type on $B(0;1)$, if for every $I\in \mathbb{S}$, we have
$$Re\left [e^{-I \gamma}\cdot q \cdot \partial_{s}(f)(q)\cdot \frac{1}{f(q)}\right ] > 0, \mbox{ for all } q=x+ Iy\in B(0;1)\bigcap \mathbb{C}_{I},$$
where $e^{-I \gamma}=\cos(\gamma)-I \sin(\gamma)$.
\end{defn}

\begin{rem}
Evidently that for $\gamma=0$, we recapture the concept of slice-starlike function in Definition \ref{4.6}, (i).
\end{rem}

We have:

\begin{thm}\label{5.2}
Let $f\in\mathcal{N}(B(0;1))$ be a function satisfying $f(0)=0$ and $\partial_{s}(f)(0)=1$ and let $\gamma\in \left (\frac{-\pi}{2}, \frac{\pi}{2}\right )$.
If $f$ is slice-spirallike of type $\gamma$ on $B(0;1)$ then $f$ is univalent in $B(0;1)$ and $f[B(0;1)]$ is a spirallike set of $\gamma$-type.
\end{thm}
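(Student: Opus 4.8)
The statement is the spirallike analogue of Theorems \ref{4.9} and \ref{4.15}, and I would model the proof on them. The first observation is that, since $f\in\mathcal N(B(0;1))$, for every $I\in\mathbb S$ the restriction $f_I=f\mid_{\mathbb D_I}$ is a holomorphic function $\mathbb D_I\to\mathbb C_I$ and all the quantities $q$, $\partial_s(f)(q)$, $f(q)^{-1}$ occurring in Definition \ref{5.1} lie in $\mathbb C_I$. Hence, exactly as in Remark \ref{4.8}, the slice-spirallike inequality
\[
{\rm Re}\left[e^{-I\gamma}\,q\,\partial_s(f)(q)\,\frac{1}{f(q)}\right]>0,\qquad q\in\mathbb D_I,
\]
is literally the classical condition ${\rm Re}\bigl[e^{-I\gamma}\,zf_I'(z)/f_I(z)\bigr]>0$ characterizing complex $\gamma$-spirallike functions on the disc $\mathbb D_I$ (with $I$ in the role of the imaginary unit). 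Thus on each slice $f_I$ is an honest complex spirallike function, and I may invoke the one-variable theory of spirallike functions slice by slice.

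\textbf{Univalence.} I would establish injectivity by the same four-case argument used in the proof of Theorem \ref{4.9}, the only new input being the classical fact that a complex $\gamma$-spirallike function is univalent. For $q_1,q_2$ lying on one and the same slice $\mathbb C_I$ this already gives $f(q_1)\neq f(q_2)$; the cases $q_1,q_2\in\mathbb R$ and $q_1\in\mathbb C_I$, $q_2\in\mathbb C_{-I}=\mathbb C_I$ reduce to it; and when $q_1\in\mathbb C_J$, $q_2\in\mathbb C_I$ with $J\neq\pm I$, the General Representation Formula of Theorem \ref{1.3} forces $f(q_1)=a+Jb$ and $f(q_2)=c+Id$, so an equality $f(q_1)=f(q_2)$ leads to the same contradictions (either $q_1,q_2\in\mathbb R$, or $I=J$) as in Theorem \ref{4.9}. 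This yields univalence on all of $B(0;1)$.

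\textbf{Spirallikeness of the image.} As in the starlike case I would write $B(0;1)=\bigcup_{I\in\mathbb S}\mathbb D_I$ and $f[B(0;1)]=\bigcup_{I\in\mathbb S}f(\mathbb D_I)$. Since each $f_I$ is complex $\gamma$-spirallike, the one-variable theory gives that $f(\mathbb D_I)$ is a $\gamma$-spirallike subset of $\mathbb C_I$, i.e. for every $w\in f(\mathbb D_I)$ the spiral $s(t)=e^{-t\,e^{-I\gamma}}w$ (computed inside the commutative plane $\mathbb C_I$) remains in $f(\mathbb D_I)$. Given an arbitrary $w\in f[B(0;1)]$ I would pick $I\in\mathbb S$ with $w\in f(\mathbb D_I)$ and conclude that the whole $\gamma$-spiral joining $w$ to the origin lies in $f(\mathbb D_I)\subseteq f[B(0;1)]$, which is the asserted spirallikeness.

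\textbf{Main obstacle.} The delicate point, absent in the starlike case where the multiplier $t\in[0,1]$ is real and automatically preserves each slice, is the consistency of the spiral multiplier across slices. For $w\in\mathbb C_I$ the relevant spiral is driven by $\lambda=e^{-I\gamma}$ and stays in $\mathbb C_I$, whereas a naive global spiral driven by the fixed unit $i$ would in general leave $\mathbb C_{I_w}$ and miss $f[B(0;1)]$. What makes the slice-wise spirals glue into a single spirallike set is precisely the intrinsic structure of $f$: by Theorem \ref{1.3} one has $f(x+Iy)=\alpha(x,y)+I\beta(x,y)$ with $\alpha,\beta$ real valued and \emph{independent of} $I$, so the spiral attached to a point of $f(\mathbb D_I)$ is governed by the imaginary unit $I$ of the slice containing it. Verifying that this is the correct reading of a ``$\gamma$-spirallike set'' in $\mathbb H$, and that with it the union argument is valid, is the heart of the matter.
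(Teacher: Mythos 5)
Your proposal follows essentially the same route as the paper's proof: univalence via the four-case argument of Theorem \ref{4.9} together with the classical fact that complex $\gamma$-spirallike functions are univalent, and spirallikeness of the image via the decomposition $f[B(0;1)]=\bigcup_{I\in\mathbb S}f(\mathbb D_I)$ with each $f(\mathbb D_I)$ spirallike by the one-variable theory. The ``main obstacle'' you flag is real and is in fact glossed over by the paper, whose proof writes the multiplier as $e^{-\lambda t}$ with $\lambda=e^{-i\gamma}$ for a fixed $i$ while implicitly using $e^{-I\gamma}$ on the slice $\mathbb C_I$; your reading, in which the spiral attached to a point of $f(\mathbb D_I)$ is driven by the unit $I$ of that slice, is exactly what is needed for the union argument to close.
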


\begin{proof} The univalence of $f$ in $B(0;1)$ follows exactly as in the proof of Theorem \ref{4.9}, by taking into account that the spirallikeness of $\gamma$-type of a function of complex variable implies its univalence.

It was remained to prove the spirallikeness of $\gamma$-type of $f[B(0;1)]$. With the notations in the proofs of Theorems \ref{4.9} and \ref{4.15},
we have
$B(0;1)=\bigcup_{I\in \mathbb{S}}\mathbb{D}_{I}$ and
$f[B(0;1)]=\bigcup_{I\in \mathbb{S}}f(\mathbb{D}_{I})$, where every $f(\mathbb{D}_{I})$ is $\gamma$-spirallike according to Definition \ref{5.1} and
by \cite{Moc}, p. 42, Theorem 4.4.1.

Taking $e^{-\lambda t}$ with $\lambda=e^{-i \gamma}$, this implies, for all $t\ge 0$,
$$e^{-\lambda t} f[B(0;1)] =\bigcup_{I\in \mathbb{S}}e^{-\lambda t} f(\mathbb{D}_{I}) \subset \bigcup_{I\in \mathbb{S}} f(\mathbb{D}_{I})=f(B(0;1)),$$
which proves the theorem. \end{proof}

\begin{rem}
As in the cases of starlikeness and convexity, we can easily construct spirallike functions of quaternion variable, from spirallike functions
of complex variable whose series expansions have all coefficients real, simply by replacing into their expression $z\in \mathbb{C}$, by
$q$-quaternion.
\end{rem}

\noindent{\bf Open question.}
An interesting question would be to find large subclasses of functions in $\mathcal{R}(B(0;1))$ different from the class $\mathcal{N}(B(0;1))$, defined as in Definitions \ref{4.6} and \ref{5.1}, for which the univalence and the geometric properties in Theorems \ref{4.9}, \ref{4.15}
and \ref{5.2} still hold.
As some very particular cases, for example when $f$ is of the form $f(q)=h(q)\cdot C_{0}$,\,\, $q\in B(0;1)$, with $h\in \mathcal{N}(B(0;1))$ and $C_{0}\in \mathbb{H}$ a constant, clearly $f\not \in \mathcal{N}(B(0;1))$ and if $h$ satisfies one of the Definitions \ref{4.6}, (i), (ii), or \ref{5.1}, then it is easy to see that $f$ also satisfies the same kind of definition. At the same time, since $h$ has one of the corresponding geometric property (including univalence) in Theorems \ref{4.9}, \ref{4.15}
and \ref{5.2}, it is easy to check directly that $f=h\cdot C_{0}$ keeps the same geometric property (and univalence) of $h$.


\begin{thebibliography}{99}

\bibitem{Alex} J.W. Alexander, {\it Functions which map the interior of the unit circle upon simple regions}, Ann. of Math.,
{\bf 17}(1915), 12--22.

\bibitem{abcs}
D.~{Alpay}, V.~{Bolotnikov}, F.~{Colombo}, I.~{Sabadini},
 {\it Self-mappings of the quaternionic unit ball: multiplier properties,
  Schwarz-Pick inequality, and Nevanlinna--Pick interpolation problem},
 To appear in the Indiana Mathematical Journal of Mathematics.

\bibitem{acks} D.~{Alpay}, F.~{Colombo}, D. P. Kimsey, I.~{Sabadini},
{\it An extension of Herglotz's theorem to the quaternions},  J. Math. Anal. Appl. (2015).

\bibitem{acs1}
D.~{Alpay}, F.~{Colombo}, and I.~{Sabadini},
 {\it Schur functions and their realizations in the slice hyperholomorphic
  setting},
 { Integral Equations and Operator Theory}, {\bf 72} (2012), 253--289.

\bibitem{acs2} D.~Alpay, F.~Colombo, and I.~Sabadini,
{\em Pontryagin-de {B}ranges-{R}ovnyak spaces of slice hyperholomorphic
  functions},{ J. Anal. Math.}, {\bf 121} (2013), 87--125.

\bibitem{BS}
C.~Bisi, C. Stoppato, {\em The Schwarz-Pick lemma for slice
regular functions}, Indiana Univ. Math. J. {\bf 61} (2012),  no. 1, 297--317.


\bibitem{cartan} H. Cartan, {\em Elementary Theory of Analytic Functions of One or Several Complex Variables}, Dover Publ. Inc., New York, 1995.

\bibitem{CS}  F. Colombo, I. Sabadini,  {\it Some remarks on the  $\mathcal{S}$-spectrum}, Compl. Var. Ell. Eq., {\bf 58}(2013), no. 1, 1-6.

 \bibitem{CGS2}  F. Colombo, J. Oscar Gonz\'alez-Cervantes, I. Sabadini, {\it On slice biregular functions and isomorphisms of Bergman spaces},
  Compl. Var. Ell. Eq., {\bf 57}(2012), nos. 7�8, 825-839.

\bibitem{CGS}  F. Colombo, J. Oscar Gonz\'alez-Cervantes, I. Sabadini, {\it The C-property for slice regular functions and applications to the Bergman space},
  Compl. Var. Ell. Eq., {\bf 58}(2013), no. 10, 1355-1372.


\bibitem{CGS3}  F. Colombo, J. Oscar Gonz\'alez-Cervantes, I. Sabadini, {\it A nonconstant coefficients differential operator associated to slice monogenic functions}, Trans. Amer. Math. Soc. 365 (2013), 303--318.

\bibitem{CSS} F. Colombo, I. Sabadini, D.C. Struppa, {\it Noncommutative Functional Calculus. Theory and Applications of
Slice Hyperholomorphic Functions}, Birkhauser, Basel, 2011.

\bibitem{DeBrange} L. de Branges, {\it A proof of Bieberbach conjecture}, Acta Math., {\bf 154}(1985), 137-152.

\bibitem{DRGS} C. Della Rocchetta, G. Gentili, G. Sarfatti,
              {\it A {B}loch-{L}andau theorem for slice regular functions},
 {Advances in hypercomplex analysis},
  {Springer INdAM Ser.},{1}, {55--74}, {Springer, Milan}, {2013}.

\bibitem{DRGS1} C. Della Rocchetta, G. Gentili, G. Sarfatti,
     {\it The {B}ohr theorem for slice regular functions},
    {Math. Nachr.}, {\bf 285} (2012), 2093--2105.

\bibitem{duren} P. L. Duren, {\it Univalent Functions}, Springer-Verlag, New York - Berlin - Heidelberg - Tokyo, 1983.


\bibitem{G4} S.G. Gal, {\it Introduction to Geometric Function Theory of Hypercomplex Variables}, Nova Science Publishers, Inc.,
Hauppauge, New York, 2002.

\bibitem{GS1} G. Gentili, G. Sarfatti,
     {\it Landau-{T}oeplitz theorems for slice regular functions over
              quaternions},
  {Pacific J. Math.}, {\bf 265} (2013), 381--404.

\bibitem{GSS} G. Gentili, C Stoppato, D.C. Struppa, {\it Regular functions of a quaternionic variable}, Springer
Monographs in Mathematics, Springer, Berlin-Heidelberg (2013).

\bibitem{GS} G. Gentili, D.C. Struppa, {\it A new theory of regular functions of a quaternion variable},
Advances in Mathematics, {\bf 216}(2007), 279-301.

\bibitem{GP} R. Ghiloni, A. Perotti, {\em Slice regular functions on real alternative
algebras}, Adv. Math., {\bf 226} (2011), 1662--1691.

\bibitem{Glu-Har} A. Gluchoff, F. Hartmann, {\it Starlikeness of polynomials and finite Blaschke products},
Ann. Polon. Math., {\bf 93}(2008), no. 2, 187-196.

\bibitem{Gra-Koh} I. Graham, G. Kohr, {\it Geometric Function Theory in One and Higher Dimensions}, Monographs and
Textbooks in Pure and Applied Mathematics, {\bf 255}, Marcel Dekker, Inc., New York, 2003.

\bibitem{Lib} R.J. Libera, {\it Some classes of regular univalent functions}, Proc. Amer. Math. Soc., {\bf 16}(1965), 755-758.

\bibitem{Moc2} P.T. Mocanu, {\it Some simple criteria for starlikeness and convexity}, Libertas Mathematica, {\bf 13}(1993),
27-40.

\bibitem{Moc} P.T. Mocanu, T. Bulboaca, Gr.S. Salagean, {\it Geometric Function Theory of Univalent Functions}(in Romanian),
Casa Cartii de Stiinta, Cluj, 1999.

\bibitem{gr} G. Ren, X. Wang, {\it Carath\'eodory theorems for slice regular functions}, preprint 2014.


\bibitem{Rob} M.I.S. Robertson, {\it On the theory of univalent functions}, Ann. of Math., {\bf (2) 37}(1936), No. 2, 374-408.

\end{thebibliography}
\end{document}